\documentclass[12pt,a4paper]{amsart}
\usepackage{fancyhdr,a4wide,fontenc}
\usepackage[dvipsnames]{xcolor}
\usepackage{mathtools}

\usepackage{amsmath}
\usepackage{amssymb}
\usepackage{amsthm}
\usepackage{float}
\usepackage{caption}
\usepackage[caption = false]{subfig}
\usepackage{enumerate}
\usepackage{fullpage}
\usepackage{graphicx}
\usepackage{multicol,calc}
\usepackage{color}
\usepackage{pifont}
\usepackage{comment}

\usepackage{tikz}   
\usetikzlibrary{arrows.meta,calc} 

\usepackage{hyperref,lscape,appendix}

\input xy
\xyoption{all}

\usepackage[all]{xy}

\newcommand{\bfb}{{\bf b}}

\newcommand{\bfz}{{\bf z}}

\newcommand{\calQ}{{\mathcal Q}}
\newcommand{\calD}{{\mathcal D}}
\newcommand{\calM}{{\mathcal M}}
\newcommand{\loops}{{\mathcal{L}}}

\newcommand{\Sc}[1]{\overline{{\mathcal S}_0}}

\newcommand{\trans}{^\top}
\newcommand{\bR}{{\mathbb R}}
\newcommand{\bN}{{\mathbb N}}

\DeclareMathOperator{\vecop}{vec}

\DeclareMathOperator{\spec}{spec}

\newcommand\dvec[1]{#1}

 \usepackage{graphicx}

\usetikzlibrary{positioning}

  \tikzset{main node/.style={circle,fill=red!10,draw,minimum size=0.2cm,inner sep=0pt},
             }
\tikzset{
    every node/.style={draw, circle, fill=white, inner sep=1pt}
    }
            
\newtheorem{theorem}{Theorem}[section]
\newtheorem{corollary}[theorem]{Corollary}

\newtheorem{lemma}[theorem]{Lemma}

\newtheorem{question}[theorem]{Question}
\theoremstyle{definition}
\newtheorem{example}[theorem]{Example}
\newtheorem{definition}[theorem]{Definition}
\newtheorem{remark}[theorem]{Remark}

\newtheorem{problem}[theorem]{Problem}

\title{Combinatorial aspects of the non-symmetric strong spectral property for graphs}

\author{Sara Koljan\v{c}i\'{c}}

\address[S. Koljan\v{c}i\'{c}]{Faculty of Natural Sciences and Mathematics, University of Banja Luka, Bosnia and Herzegovina}
\email{sara.jevdjenic@pmf.unibl.org}

\author{Polona Oblak}

\address[P.~Oblak]{Faculty of Computer and Information Science and Faculty of Mathematics and Physics, University of Ljubljana, Slovenia; and Institute of Mathematics, Physics, and Mechanics, Slovenia}
\email{polona.oblak@fri.uni-lj.si}

\thanks{Both authors acknowledge the financial support from the bilateral projects no.~BI-BA/24-25-024,~BI-BA/26-27-007 and~BI-BA/26-27-008 funded by Slovenian Research Agency and Ministry of Civil Affairs, Bosnia and Herzegovina. Polona Oblak has also received funding from Slovenian Research Agency, research core funding no.~P1-0222.}

\date{\today}

\begin{document}

\maketitle
\begin{abstract}

In this paper, we investigate the non-symmetric Strong Spectral Property (nSSP) from a combinatorial perspective. To zero–nonzero patterns of matrices we associate directed graphs and study when they require or allow the nSSP, providing a framework that avoids verifying the nSSP for individual matrices. 

 A new combinatorial method is introduced and used to recognise several patterns that require the nSSP. It is shown that loop assignments in double paths play a critical role in establishing this property, and we show that an open question regarding irreducible tridiagonal patterns has a negative answer. We also investigate whether the minimum number of arcs in a directed graph on $n$ vertices that requires the nSSP, is equal to $2n-1$, and confirm this minimum for several specific digraph families. 

\end{abstract}

\noindent{\bf Keywords:} Non-symmetric strong spectral property (nSSP), Sign pattern matrices,  Digraphs, Inverse eigenvalue problem.

\noindent{\bf AMS subject classifications:}
05C50, 
05C38, 
15B35, 
15A29 

\bigskip

\section{Introduction}

The Inverse Eigenvalue Problems (IEP) are central questions in spectral theory, concerned with determining matrices of a specific structure that realise a prescribed set of eigenvalues. The continuing difficulty of these problems has sustained a large and active research community, driven by multidisciplinary applications,~\cite{2005-IEP-Chu-Golub}. In the last decade, the Strong Properties of matrices, which rely on the transversal intersection of manifolds and the Implicit Function Theorem, have proven to be a powerful tool for the symmetric inverse eigenvalue problem, and have significantly advanced the solutions for the IEP for graphs,~\cite{Barrett17GeneralizationsSAP, 1993-Verdiere-SAP, Hogben-Lin-Shader-IEPG-book}. Recently, their non-symmetric versions were introduced, see e.g.~\cite{Arav-24-nSSP,2025-Cheung-Shader-SSVP,2025-Curtis-nSMP,2020-Curtis-Shader-SIPP,Fallat22Bifurcation,2025-Saha-Tilis-VanderMeulen-VanTuyl-nSMP}.
In this work we investigate the non-symmetric Strong Spectral Property (nSSP) of matrices, firstly introduced in~\cite{Fallat22Bifurcation}. 

\begin{definition}[\cite{Fallat22Bifurcation}]
 A matrix $A\in \mathbb{R}^{n\times n}$ has the \emph{non-symmetric Strong Spectral Property (nSSP)} provided $X=O$ is the only matrix such that $A\circ X=[A,X\trans]=O.$
 \end{definition}
 
 The authors in~\cite{Fallat22Bifurcation} developed the non-symmetric  
 Superpattern Lemma, which shows the strength of an existence of a matrix with a desired spectrum and having the nSSP.

\begin{lemma}[Superpattern Lemma,~\cite{Fallat22Bifurcation}]\label{lem:superpattern-lemma}
    If an $n\times n$ real matrix $A$ of pattern $P$ has the nSSP, then for every superpattern $P'$ of $P$ there exists  a matrix $A'$ of pattern $P'$ with the nSSP, which is similar to $A$.
\end{lemma}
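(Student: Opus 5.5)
The plan is to prove the Superpattern Lemma with the standard transversality argument and the Implicit Function Theorem. Let $A$ have pattern $P$ and the nSSP, and let $P'$ be a superpattern of $P$. Write $E$ for the set of positions that are nonzero in $P'$ but zero in $P$. We want to perturb $A$ inside its similarity orbit so that it picks up small nonzero entries in the positions of $E$, while the entries in the support of $P$ stay nonzero. Two manifolds are involved:
\[
\mathcal{O}_A=\{S^{-1}AS : S \text{ invertible}\}, \qquad \mathcal{Q}(P)=\{B : \text{$B$ has pattern $P$}\},
\]
and the target matrix $A'$ will lie in $\mathcal{O}_A$ near $A$.

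\textbf{Step 1: tangent spaces.} The tangent space of $\mathcal{O}_A$ at $A$ is $\{[A,K] : K\in\mathbb{R}^{n\times n}\}$. The tangent space of $\mathcal{Q}(P)$ at $A$ is the set of matrices supported on the support of $P$.

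\textbf{Step 2: normal spaces.} We use the trace inner product $\langle X,Y\rangle=\operatorname{tr}(X\trans Y)$. A matrix $X$ is orthogonal to all $[A,K]$ if and only if $\operatorname{tr}(X\trans(AK-KA))=\operatorname{tr}((X\trans A-AX\trans)K)=0$ for all $K$, that is, $[A,X\trans]=O$. A matrix $X$ is orthogonal to the tangent space of $\mathcal{Q}(P)$ if and only if $X$ vanishes on the support of $P$, which is the same as $A\circ X=O$. So the nSSP says exactly that the two normal spaces meet only in $O$. Equivalently, the two tangent spaces together span $\mathbb{R}^{n\times n}$, so $\mathcal{O}_A$ and $\mathcal{Q}(P)$ intersect transversally at $A$.

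\textbf{Step 3: the smooth map.} Let $\operatorname{vec}$ collect the entries indexed by the support of $P$ and by $E$. Define
\[
F(S,B,t)=S^{-1}AS-B-t,
\]
where $S$ is near $I$, $B$ is supported on the support of $P$, and $t$ is a matrix supported on $E$. By Step 2, the derivative of $(S,B)\mapsto S^{-1}AS-B$ at $(I,A)$, namely $(K,\dot B)\mapsto[A,K]-\dot B$, is surjective onto $\mathbb{R}^{n\times n}$.

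\textbf{Step 4: the Implicit Function Theorem.} First restrict $K$ to a complement of the kernel of this derivative. The Implicit Function Theorem then gives, for every small $t$, some $(S(t),B(t))$ close to $(I,A)$ with $S(t)^{-1}AS(t)=B(t)+t$. Choose $t$ with all entries on $E$ nonzero and small. For such $t$, $B(t)$ is close to $A$, so it is nonzero on the whole support of $P$. Hence $A'=B(t)+t$ has pattern exactly $P'$ and is similar to $A$.

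\textbf{Step 5: $A'$ has the nSSP.} The map $M\mapsto$ (the dimension of the space of $X$ with $M\circ X=[M,X\trans]=O$) is upper semicontinuous in $M$ when the pattern is fixed. But the pattern changes here, so we argue directly with transversality instead. Transversality of the two manifolds at the point $A$ is an open condition, and for a nearby point of the orbit we check it against the larger manifold $\mathcal{Q}(P')$. The tangent space of $\mathcal{Q}(P')$ at $A'$ contains all matrices supported on the support of $P$. The orbit's tangent space at $A'$ is $\{[A',K]\}$, which depends continuously on $A'$ and has constant dimension, since $A'$ is similar to $A$ and so has the same centralizer dimension. The sum $\{[A',K]\}+\{\text{matrices supported on the support of } P\}$ therefore stays all of $\mathbb{R}^{n\times n}$ for $A'$ near $A$, because a surjective linear map varying continuously remains surjective. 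Reversing Step 2 then shows that $A'$ has the nSSP for the pattern $P'$.

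I expect Step 5 to be the main obstacle: the nSSP of $A'$ has to be deduced from the openness of surjectivity. The subtle point is the constant dimension of the orbit tangent space, which follows from similarity.
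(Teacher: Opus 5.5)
Your proof is correct. The paper does not prove this lemma itself; it imports it from Fallat et al.~\cite{Fallat22Bifurcation}, where the argument is essentially yours: the nSSP is identified with transversality at $A$ of the similarity orbit $\{S^{-1}AS\}$ and the pattern manifold, and the new positions $E$ are then switched on by the Implicit Function Theorem. That source packages this step as a general lemma that transversal intersections persist under small smooth perturbations of the manifolds, here the translates of the pattern space by $t$ supported on $E$.

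Two cosmetic points. In Step 4 the restriction should be applied to the pair $(K,\dot B)$ rather than to $K$ alone, or you can simply invoke the local submersion theorem. In Step 5 the constant-dimension remark is unnecessary. Openness of surjectivity of the linear map $(K,\dot B)\mapsto [M,K]-\dot B$, which depends continuously on $M$, already suffices, together with the inclusion of the support of $P$ in that of $P'$.
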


 Existing literature concerning the IEP for non-symmetric matrices has predominantly concentrated on tridiagonal patterns, see e.g., \cite{2024-IEP-nonsymmetric-paths, 1997-IEP-examples}. However, Superpattern Lemma together with the Bifurcation Lemma (see~\cite[Theorem 5.3]{Fallat22Bifurcation}) enable researchers to advance the studies in several subproblems of the IEP ,~\cite{Arav-24-nSSP,2026-Berliner-Refined-inertia, 2022-Breen-q-for-sign-pattern, 2024-Breen-Allow-sequence, 2026-LAA-Cavers-VanderMeulen}.

It is difficult to resolve the existence of a matrix with the nSSP and with a desired spectrum.  We follow~\cite{20-Lin-SSPgraph} and give a combinatorial description of non-symmetric patterns for which every matrix has the nSSP. Such pattern proved to be useful in IEP, since we do not have to verify if a matrix $A$ in the Superpattern Lemma has the nSSP. In particular, it says that all spectra achieved by  pattern $P$, in which every matrix has the nSSP, can be also realised in every superpattern of $P$.  

In this paper, the zero--nonzero patterns will be presented by directed graphs with loops. 
For $A=(a_{ij})\in \bR^{n\times n}$ we denote by $\calD(A)$ the directed graph $G=(V(G),E(G))$, for which $V(G)=[n]$ and $a_{ij}\ne 0$ if and only if $(i,j)\in E(G)$. For the fundamental definitions and properties of these digraphs, the reader is referred to Subsection~\ref{sec:notation}.
For a directed graph $G$ with $n$ vertices, let $\calM(G)$ be the set of all $n\times n$ real matrices $A$ with $\calD(A)=G$. 

\begin{definition}
    If every matrix $A \in \calM(G)$ has the nSSP, then we say that $G$ \emph{requires the nSSP}, and if there is a matrix $A \in \calM(G)$ with the nSSP, then $G$ \emph{allows the nSSP}.
\end{definition}

A similar notion was introduced in~\cite{2025-Curtis-nSMP}, where the authors define the nSMP and investigate digraphs and sign patterns that require the nSMP. It was proved in~\cite
{2025-Saha-Tilis-VanderMeulen-VanTuyl-nSMP} 
that a pattern 
allows the nSMP if and only if it allows distinct eigenvalues. Some additional work on the number of distinct eigenvalues of a pattern has also been carried out in~\cite{2022-Breen-q-for-sign-pattern,2024-Breen-Allow-sequence,2025-Curtis-nSMP,2025-Saha-Tilis-VanderMeulen-VanTuyl-nSMP}. 
Note that any digraph requiring the nSSP necessarily requires the nSMP. Consequently, several results established herein for the nSSP hold in the context of the nSMP as well.

In~\cite{Arav-24-nSSP} the authors showed that nSSP is equivalent to Similarity-Transversality Property (STP), which relies on the rank of the Jacobian matrix of the commutator $[A,X]$.  
The authors present several zero--nonzero patterns and several sign patterns that require the nSSP. Also, the authors observe that the loop assignment plays a crucial role in a digraph requiring/allowing the nSSP.

\begin{lemma}\label{lem:noloops-all-loops-nssp}
Let $G$ be a directed graph. 
\begin{enumerate}[(A)]
    \item \cite[Observation 3.4]{Arav-24-nSSP} If $G$ has no loops, then $G$ does not allow the nSSP.
    \item \cite[Proposition 5.4]{Arav-24-nSSP}\label{all-loops} If all vertices of $G$ have loops, then $G$ allows the nSSP.
\end{enumerate}
\end{lemma}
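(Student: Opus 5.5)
For part (A), the plan is to exhibit a nonzero witness $X$ for every $A\in\calM(G)$. Since $G$ has no loops, every diagonal entry of $A$ vanishes, so any diagonal $X$ satisfies $A\circ X=O$. I would take $X=I$. Then $[A,X\trans]=AI-IA=O$, so $X=I\neq O$ violates the nSSP for every matrix of pattern $G$. Hence $G$ does not allow the nSSP.

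For part (B), I would first construct a matrix whose pattern is a subpattern of $G$ and which has the nSSP, and then pass to $G$ by the Superpattern Lemma (Lemma~\ref{lem:superpattern-lemma}). The natural choice is a diagonal matrix $D=\mathrm{diag}(d_1,\dots,d_n)$ with pairwise distinct nonzero $d_i$. Its digraph consists only of the $n$ loops, and every vertex of $G$ has a loop, so $G$ is a superpattern of $\calD(D)$.

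It remains to check that $D$ has the nSSP. Suppose $D\circ X=O$ and $[D,X\trans]=O$. The first condition forces $x_{ii}=0$ for all $i$, since each $d_i\ne 0$. Writing $Y=X\trans$, the $(i,j)$ entry of the commutator condition reads $(d_i-d_j)y_{ij}=0$. Because the $d_i$ are distinct, this gives $y_{ij}=0$ for all $i\ne j$. Together these give $X=O$. The Superpattern Lemma then yields a matrix in $\calM(G)$ with the nSSP, similar to $D$.

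There is no real obstacle in either part. The only point needing care is in (B): we need both that all $d_i$ are nonzero (which uses the loops, to kill the diagonal of $X$) and that they are distinct (to kill the off-diagonal part of $X$).
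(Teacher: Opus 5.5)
Your proof is correct in both parts. In (A), a loopless pattern forces a zero diagonal, so $X=I$ is a nonzero witness. In (B), a diagonal matrix with distinct nonzero diagonal entries has the nSSP, and the Superpattern Lemma transfers this to any $G$ in which every vertex is looped.

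The paper does not reprove this lemma; it only cites Observation~3.4 and Proposition~5.4 of Arav et al. Your arguments are the natural, standard ones behind those results.
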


In our work, we examine more closely how the assignment of the loops influences whether a digraph possesses the nSSP. Our work is motivated by two of their open questions. We present them in the next two subsections and give an overview of our work.

\subsection{Irreducible tridiagonal patterns} 
In~\cite[Problem~6.5]{Arav-24-nSSP}, the authors ask whether every irreducible tridiagonal zero--nonzero pattern with at least one nonzero diagonal entry requires the STP. Note that for an irreducible tridiagonal matrix $A$ its directed graph ${\mathcal D}(A)$ is a strongly connected directed graph, whose underlying simple graph is a path. We call it a double path, and we label vertices of the double path in its natural labeling as following.

\begin{definition}
   A digraph $\dvec{P}_{n,\loops}$ is called \emph{a double path  with loop assignment $\loops\subseteq [n]$} if $V(\dvec{P}_{n,\loops})=[n]$ and $$E(\dvec{P}_{n,\loops})=\{(i,i+1),(i+1,i),(\ell,\ell) \colon i\in [n-1],\ell \in \loops\}.$$ 
\end{definition}

Note that a matrix $A=(a_{ij})\in {\mathcal M}(\dvec{P}_{n,\loops})$ has $a_{ij}\ne 0$ if and only if $i=j$ and $i\in \loops$ or $|i-j|=1$. This means $A$ is an irreducible tridiagonal matrix with nonzero diagonal entries in the position of $\loops$.

It is straightforward to check that  matrices 
$$A=\begin{pmatrix}
a & b & 0 \\
 c & 0 & c \\
 0 & b & a
\end{pmatrix} \quad \text{and } \quad
X=\begin{pmatrix}
 0 & 0 & 1 \\
 0 & 1 & 0 \\
 1 & 0 & 0 
\end{pmatrix}
$$
satisfy $A\circ X=[A,X\trans]=O$. So, the digraph ${\mathcal D}(A)=\dvec{P}_{3,\{1,3\}}$ shown on Figure~\ref{fig:path-3+2-loops},  does not require the nSSP, and hence the answer to~\cite[Problem~6.5]{Arav-24-nSSP} is negative. 

 \begin{figure}[h]
\begin{tikzpicture}
\foreach \i in {1,...,3} {
    \node[fill=white] (\i) at (\i,0) {};
   
};

\tikzset{every loop/.style={min distance=4mm,in=120,out=60,looseness=40}}
\foreach \i in {1,3} {
     \draw[thick,->] (\i) to[loop above] (\i);
};
 
\draw[->, thick] (1) to[bend left] (2);
\draw[->, thick] (2) to[bend left] (1);
\draw[->, thick] (3) to[bend left] (2);
\draw[->, thick] (2) to[bend left] (3);

\node[rectangle, draw=none] at (2,-1) {$\dvec{P}_{3,\{1,3\}}$};
 \end{tikzpicture}
 
\caption{Example of a directed graph which does not require but allows the nSSP, see also Theorem~\ref{thm:double_paths-first-m-loops} together with Lemma~\ref{lem:superpattern-lemma}.}\label{fig:path-3+2-loops}
\end{figure}

In this paper we show that for any $n\geq 3$, $\dvec{P}_{n,\{1,n\}}$ allows the nSSP but does not require it (see Theorem~\ref{thm:double-path-vtx-2}).
This example motivates us to
propose the following alternative refinement to~\cite[Problem~6.5]{Arav-24-nSSP}, which we will investigate in Section~\ref{sec:double-paths}:

\begin{problem} \label{problem-6.5}  For every positive integer $n$ classify loop assignments $\loops\subseteq [n]$ of the double path on $n$ vertices so that $\dvec{P}_{n,\loops}$ 
\begin{enumerate}[(a)] 
\item requires the nSSP,
\item does not require but allows the nSSP,
\item does not allow the nSSP.
\end{enumerate}
\end{problem}

The difficulty of this problem is illustrated by Theorem~\ref{thm:double-path-vtx-2} and Example~\ref{ex:central_being_looped}, in which we show that there are families of double paths with a single loop that require the nSSP, families that do not require but still allow the nSSP, and families that do not allow the nSSP.
In Theorem~\ref{thm:double_paths-first-m-loops}, we show that the nSSP is required by all double paths having loops only on an initial segment of vertices.

All the results in the Section~\ref{sec:double-paths} rely on the powerful combinatorial rule which we introduce in Lemma~\ref{lem:rule1}, and is an analogue of the monomial rule for the symmetric Strong Spectral Property,~\cite[Lemma~3.1]{20-Lin-SSPgraph}.  Its strength is established in Theorem~\ref{thm:rule1-nSSP}, which is used extensively throughout the Section ~\ref{sec:double-paths}. These findings provide the basis for resolving Problem~\ref{problem-6.5} for $n \leq 5$, as demonstrated in Example~\ref{ex:n<=5} and summarised in Table~\ref{tab:small-paths}.

\begin{table}[htb]
    \centering
    \begin{tabular}{c||c|c|c||}
         $n$&  require&  allow  &   not allow  \\
         & &   not require   & \\
     \hline
     \hline
        $n=1$ & 
         \begin{tikzpicture}[scale=0.5]
         \node[fill=white] (1) at (0,-0.5) {};
          \tikzset{every loop/.style={min distance=4mm,in=120,out=60,looseness=20}}
           \foreach \i in {1} {
             \draw[thick,->] (\i) to[loop above] (\i);
        };\end{tikzpicture}
        &&
        \begin{tikzpicture}[scale=0.5] 
             \node[fill=white] (1) at (0,-0.5) {};
        \end{tikzpicture}\\
        \hline
    $n=2$ & 
         \begin{tikzpicture}[scale=0.5] 
         \node[fill=white] (1) at (0,-0.5) {};
         \node[fill=white] (2) at (1,-0.5) {};
          \tikzset{every loop/.style={min distance=4mm,in=120,out=60,looseness=20}}
           \foreach \i in {1} {
             \draw[thick,->] (\i) to[loop above] (\i);
         };
            \draw[thick](1)-- (2);
        \end{tikzpicture} \quad
          \begin{tikzpicture}[scale=0.5] 
         \node[fill=white] (1) at (0,-0.5) {};
         \node[fill=white] (2) at (1,-0.5) {};
          \tikzset{every loop/.style={min distance=4mm,in=120,out=60,looseness=20}}
           \foreach \i in {1,2} {
             \draw[thick,->] (\i) to[loop above] (\i);
         };
            \draw[thick](1)-- (2);
        \end{tikzpicture}
        &&
          \begin{tikzpicture}[scale=0.5] 
         \node[fill=white] (1) at (0,-0.5) {};
         \node[fill=white] (2) at (1,-0.5) {};
            \draw[thick](1)-- (2);
        \end{tikzpicture}\\
        \hline
$n=3$ & 
         \begin{tikzpicture}[scale=0.5] 
         \node[fill=white] (1) at (0,-0.5) {};
         \node[fill=white] (2) at (1,-0.5) {};
          \node[fill=white] (3) at (2,-0.5) {};
          \tikzset{every loop/.style={min distance=4mm,in=120,out=60,looseness=20}}
           \foreach \i in {1} {
             \draw[thick,->] (\i) to[loop above] (\i);
         };
            \draw[thick](1)-- (2)--(3);
        \end{tikzpicture} \quad
          \begin{tikzpicture}[scale=0.5] 
         \node[fill=white] (1) at (0,-0.5) {};
         \node[fill=white] (2) at (1,-0.5) {};
          \node[fill=white] (3) at (2,-0.5) {};
          \tikzset{every loop/.style={min distance=4mm,in=120,out=60,looseness=20}}
           \foreach \i in {1,2} {
             \draw[thick,->] (\i) to[loop above] (\i);
         };
            \draw[thick](1)-- (2)--(3);
        \end{tikzpicture}\quad
        \begin{tikzpicture}[scale=0.5] 
         \node[fill=white] (1) at (0,-0.5) {};
         \node[fill=white] (2) at (1,-0.5) {};
          \node[fill=white] (3) at (2,-0.5) {};
          \tikzset{every loop/.style={min distance=4mm,in=120,out=60,looseness=20}}
           \foreach \i in {1,2,3} {
             \draw[thick,->] (\i) to[loop above] (\i);
         };
            \draw[thick](1)-- (2)--(3);
        \end{tikzpicture}
        &\begin{tikzpicture}[scale=0.5] 
         \node[fill=white] (1) at (0,-0.5) {};
         \node[fill=white] (2) at (1,-0.5) {};
          \node[fill=white] (3) at (2,-0.5) {};
          \tikzset{every loop/.style={min distance=4mm,in=120,out=60,looseness=20}}
           \foreach \i in {1,3} {
             \draw[thick,->] (\i) to[loop above] (\i);
         };
            \draw[thick](1)-- (2)--(3);
        \end{tikzpicture} 
        &
          \begin{tikzpicture}[scale=0.5] 
         \node[fill=white] (1) at (0,-0.5) {};
         \node[fill=white] (2) at (1,-0.5) {};
          \node[fill=white] (3) at (2,-0.5) {};
            \draw[thick](1)-- (2)--(3);
        \end{tikzpicture}\quad
        \begin{tikzpicture}[scale=0.5]
         \node[fill=white] (1) at (0,-0.5) {};
         \node[fill=white] (2) at (1,-0.5) {};
          \node[fill=white] (3) at (2,-0.5) {};
          \tikzset{every loop/.style={min distance=4mm,in=120,out=60,looseness=20}}
           \foreach \i in {2} {
             \draw[thick,->] (\i) to[loop above] (\i);
         };
            \draw[thick](1)-- (2)--(3);
        \end{tikzpicture} \\
        \hline
 $n=4$ & 
         \begin{tikzpicture}[scale=0.5] 
         \node[fill=white] (1) at (0,-0.5) {};
         \node[fill=white] (2) at (1,-0.5) {};
          \node[fill=white] (3) at (2,-0.5) {};
            \node[fill=white] (4) at (3,-0.5) {};
          \tikzset{every loop/.style={min distance=4mm,in=120,out=60,looseness=20}}
           \foreach \i in {1} {
             \draw[thick,->] (\i) to[loop above] (\i);
         };
            \draw[thick](1)-- (2)--(3)--(4);
        \end{tikzpicture} \quad
          \begin{tikzpicture}[scale=0.5] 
         \node[fill=white] (1) at (0,-0.5) {};
         \node[fill=white] (2) at (1,-0.5) {};
          \node[fill=white] (3) at (2,-0.5) {};
                  \node[fill=white] (4) at (3,-0.5) {};
          \tikzset{every loop/.style={min distance=4mm,in=120,out=60,looseness=20}}
           \foreach \i in {2} {
             \draw[thick,->] (\i) to[loop above] (\i);
         };
            \draw[thick](1)-- (2)--(3)--(4);
        \end{tikzpicture}
        &\begin{tikzpicture}[scale=0.5] 
         \node[fill=white] (1) at (0,-0.5) {};
         \node[fill=white] (2) at (1,-0.5) {};
          \node[fill=white] (3) at (2,-0.5) {};
         \node[fill=white] (4) at (3,-0.5) {};
          \tikzset{every loop/.style={min distance=4mm,in=120,out=60,looseness=20}}
           \foreach \i in {1,3} {
             \draw[thick,->] (\i) to[loop above] (\i);
         };
            \draw[thick](1)-- (2)--(3)--(4);
        \end{tikzpicture}  
        &    \begin{tikzpicture}[scale=0.5] 
         \node[fill=white] (1) at (0,-0.5) {};
         \node[fill=white] (2) at (1,-0.5) {};
          \node[fill=white] (3) at (2,-0.5) {};
         \node[fill=white] (4) at (3,-0.5) {};
            \draw[thick](1)-- (2)--(3)--(4);
        \end{tikzpicture} \\
        & \begin{tikzpicture}[scale=0.5] 
         \node[fill=white] (1) at (0,-0.5) {};
         \node[fill=white] (2) at (1,-0.5) {};
          \node[fill=white] (3) at (2,-0.5) {};
         \node[fill=white] (4) at (3,-0.5) {};
          \tikzset{every loop/.style={min distance=4mm,in=120,out=60,looseness=20}}
           \foreach \i in {1,2} {
             \draw[thick,->] (\i) to[loop above] (\i);
         };
            \draw[thick](1)-- (2)--(3)--(4);
        \end{tikzpicture}\quad
         \begin{tikzpicture}[scale=0.5] 
         \node[fill=white] (1) at (0,-0.5) {};
         \node[fill=white] (2) at (1,-0.5) {};
          \node[fill=white] (3) at (2,-0.5) {};
            \node[fill=white] (4) at (3,-0.5) {};
          \tikzset{every loop/.style={min distance=4mm,in=120,out=60,looseness=20}}
           \foreach \i in {1,2,3} {
             \draw[thick,->] (\i) to[loop above] (\i);
         };
            \draw[thick](1)-- (2)--(3)--(4);
        \end{tikzpicture}         
        &
        \begin{tikzpicture}[scale=0.5] 
         \node[fill=white] (1) at (0,-0.5) {};
         \node[fill=white] (2) at (1,-0.5) {};
          \node[fill=white] (3) at (2,-0.5) {};
         \node[fill=white] (4) at (3,-0.5) {};
          \tikzset{every loop/.style={min distance=4mm,in=120,out=60,looseness=20}}
           \foreach \i in {1,4} {
             \draw[thick,->] (\i) to[loop above] (\i);
         };
            \draw[thick](1)-- (2)--(3)--(4);
        \end{tikzpicture}
        &
         \\
         &  \begin{tikzpicture}[scale=0.5] 
         \node[fill=white] (1) at (0,-0.5) {};
         \node[fill=white] (2) at (1,-0.5) {};
          \node[fill=white] (3) at (2,-0.5) {};
                  \node[fill=white] (4) at (3,-0.5) {};
          \tikzset{every loop/.style={min distance=4mm,in=120,out=60,looseness=20}}
           \foreach \i in {1,2,4} {
             \draw[thick,->] (\i) to[loop above] (\i);
         };
            \draw[thick](1)-- (2)--(3)--(4);
        \end{tikzpicture}
        \quad
          \begin{tikzpicture}[scale=0.5] 
         \node[fill=white] (1) at (0,-0.5) {};
         \node[fill=white] (2) at (1,-0.5) {};
          \node[fill=white] (3) at (2,-0.5) {};
         \node[fill=white] (4) at (3,-0.5) {};
          \tikzset{every loop/.style={min distance=4mm,in=120,out=60,looseness=20}}
           \foreach \i in {1,2,3,4} {
             \draw[thick,->] (\i) to[loop above] (\i);
         };
            \draw[thick](1)-- (2)--(3)--(4);
        \end{tikzpicture}&&\\
        \hline
 $n=5$ & 
        \begin{tikzpicture}[scale=0.5]
         \node[fill=white] (1) at (0,-0.5) {};
         \node[fill=white] (2) at (1,-0.5) {};
          \node[fill=white] (3) at (2,-0.5) {};
            \node[fill=white] (4) at (3,-0.5) {};
            \node[fill=white] (5) at (4,-0.5) {};
          \tikzset{every loop/.style={min distance=4mm,in=120,out=60,looseness=20}}
           \foreach \i in {1} {
             \draw[thick,->] (\i) to[loop above] (\i);
         };
            \draw[thick](1)-- (2)--(3)--(4)--(5);
        \end{tikzpicture} \quad
          \begin{tikzpicture}[scale=0.5] 
         \node[fill=white] (1) at (0,-0.5) {};
         \node[fill=white] (2) at (1,-0.5) {};
          \node[fill=white] (3) at (2,-0.5) {};
                  \node[fill=white] (4) at (3,-0.5) {};\node[fill=white] (5) at (4,-0.5) {};
          \tikzset{every loop/.style={min distance=4mm,in=120,out=60,looseness=20}}
           \foreach \i in {1,2} {
             \draw[thick,->] (\i) to[loop above] (\i);
         };
            \draw[thick](1)-- (2)--(3)--(4)--(5);
        \end{tikzpicture}
        &\begin{tikzpicture}[scale=0.5] 
         \node[fill=white] (1) at (0,-0.5) {};
         \node[fill=white] (2) at (1,-0.5) {};
          \node[fill=white] (3) at (2,-0.5) {};
         \node[fill=white] (4) at (3,-0.5) {};
         \node[fill=white] (5) at (4,-0.5) {};
          \tikzset{every loop/.style={min distance=4mm,in=120,out=60,looseness=20}}
           \foreach \i in {3} {
             \draw[thick,->] (\i) to[loop above] (\i);
         };
            \draw[thick](1)-- (2)--(3)--(4)--(5);
        \end{tikzpicture}  
        &
          \begin{tikzpicture}[scale=0.5] 
         \node[fill=white] (1) at (0,-0.5) {};
         \node[fill=white] (2) at (1,-0.5) {};
          \node[fill=white] (3) at (2,-0.5) {};
         \node[fill=white] (4) at (3,-0.5) {};
         \node[fill=white] (5) at (4,-0.5) {};
            \draw[thick](1)-- (2)--(3)--(4)--(5);
        \end{tikzpicture} \\
        &      \begin{tikzpicture}[scale=0.5] 
         \node[fill=white] (1) at (0,-0.5) {};
         \node[fill=white] (2) at (1,-0.5) {};
          \node[fill=white] (3) at (2,-0.5) {};
         \node[fill=white] (4) at (3,-0.5) {};
         \node[fill=white] (5) at (4,-0.5) {};
          \tikzset{every loop/.style={min distance=4mm,in=120,out=60,looseness=20}}
           \foreach \i in {1,2,3} {
             \draw[thick,->] (\i) to[loop above] (\i);
         };
            \draw[thick](1)-- (2)--(3)--(4)--(5);
        \end{tikzpicture}\quad
         \begin{tikzpicture}[scale=0.5] 
         \node[fill=white] (1) at (0,-0.5) {};
         \node[fill=white] (2) at (1,-0.5) {};
          \node[fill=white] (3) at (2,-0.5) {};
            \node[fill=white] (4) at (3,-0.5) {};
            \node[fill=white] (5) at (4,-0.5) {};
          \tikzset{every loop/.style={min distance=4mm,in=120,out=60,looseness=20}}
           \foreach \i in {1,2,3,4} {
             \draw[thick,->] (\i) to[loop above] (\i);
         };
            \draw[thick](1)-- (2)--(3)--(4)--(5);
        \end{tikzpicture} 
        &
        \begin{tikzpicture}[scale=0.5] 
         \node[fill=white] (1) at (0,-0.5) {};
         \node[fill=white] (2) at (1,-0.5) {};
          \node[fill=white] (3) at (2,-0.5) {};
         \node[fill=white] (4) at (3,-0.5) {};
         \node[fill=white] (5) at (4,-0.5) {};
          \tikzset{every loop/.style={min distance=4mm,in=120,out=60,looseness=20}}
           \foreach \i in {1,3} {
             \draw[thick,->] (\i) to[loop above] (\i);
         };
            \draw[thick](1)-- (2)--(3)--(4)--(5);
        \end{tikzpicture}
        &
        \begin{tikzpicture}[scale=0.5] 
         \node[fill=white] (1) at (0,-0.5) {};
         \node[fill=white] (2) at (1,-0.5) {};
          \node[fill=white] (3) at (2,-0.5) {};
         \node[fill=white] (4) at (3,-0.5) {};
         \node[fill=white] (5) at (4,-0.5) {};
          \tikzset{every loop/.style={min distance=4mm,in=120,out=60,looseness=20}}
           \foreach \i in {2} {
             \draw[thick,->] (\i) to[loop above] (\i);
         };
            \draw[thick](1)-- (2)--(3)--(4)--(5);
        \end{tikzpicture}\\
        &
        \begin{tikzpicture}[scale=0.5] 
         \node[fill=white] (1) at (0,-0.5) {};
         \node[fill=white] (2) at (1,-0.5) {};
          \node[fill=white] (3) at (2,-0.5) {};
                  \node[fill=white] (4) at (3,-0.5) {};
                  \node[fill=white] (5) at (4,-0.5) {};
          \tikzset{every loop/.style={min distance=4mm,in=120,out=60,looseness=20}}
           \foreach \i in {1,2,3,5} {
             \draw[thick,->] (\i) to[loop above] (\i);
         };
            \draw[thick](1)-- (2)--(3)--(4)--(5);
        \end{tikzpicture}
        \quad
          \begin{tikzpicture}[scale=0.5] 
         \node[fill=white] (1) at (0,-0.5) {};
         \node[fill=white] (2) at (1,-0.5) {};
          \node[fill=white] (3) at (2,-0.5) {};
         \node[fill=white] (4) at (3,-0.5) {};
         \node[fill=white] (5) at (4,-0.5) {};
          \tikzset{every loop/.style={min distance=4mm,in=120,out=60,looseness=20}}
           \foreach \i in {1,2,3,4,5} {
             \draw[thick,->] (\i) to[loop above] (\i);
         };
            \draw[thick](1)-- (2)--(3)--(4)--(5);
        \end{tikzpicture}
        &
        \begin{tikzpicture}[scale=0.5] 
         \node[fill=white] (1) at (0,-0.5) {};
         \node[fill=white] (2) at (1,-0.5) {};
          \node[fill=white] (3) at (2,-0.5) {};
         \node[fill=white] (4) at (3,-0.5) {};
         \node[fill=white] (5) at (4,-0.5) {};
          \tikzset{every loop/.style={min distance=4mm,in=120,out=60,looseness=20}}
           \foreach \i in {1,5} {
             \draw[thick,->] (\i) to[loop above] (\i);
         };
            \draw[thick](1)-- (2)--(3)--(4)--(5);
        \end{tikzpicture}
        &
        \begin{tikzpicture}[scale=0.5] 
         \node[fill=white] (1) at (0,-0.5) {};
         \node[fill=white] (2) at (1,-0.5) {};
          \node[fill=white] (3) at (2,-0.5) {};
         \node[fill=white] (4) at (3,-0.5) {};
         \node[fill=white] (5) at (4,-0.5) {};
          \tikzset{every loop/.style={min distance=4mm,in=120,out=60,looseness=20}}
           \foreach \i in {2,4} {
             \draw[thick,->] (\i) to[loop above] (\i);
         };
            \draw[thick](1)-- (2)--(3)--(4)--(5);
        \end{tikzpicture}\\
         &&
        \begin{tikzpicture}[scale=0.5] 
         \node[fill=white] (1) at (0,-0.5) {};
         \node[fill=white] (2) at (1,-0.5) {};
          \node[fill=white] (3) at (2,-0.5) {};
         \node[fill=white] (4) at (3,-0.5) {};
         \node[fill=white] (5) at (4,-0.5) {};
          \tikzset{every loop/.style={min distance=4mm,in=120,out=60,looseness=20}}
           \foreach \i in {2,3,4} {
             \draw[thick,->] (\i) to[loop above] (\i);
         };
            \draw[thick](1)-- (2)--(3)--(4)--(5);
        \end{tikzpicture}&\\
         &&
        \begin{tikzpicture}[scale=0.5] 
         \node[fill=white] (1) at (0,-0.5) {};
         \node[fill=white] (2) at (1,-0.5) {};
          \node[fill=white] (3) at (2,-0.5) {};
         \node[fill=white] (4) at (3,-0.5) {};
         \node[fill=white] (5) at (4,-0.5) {};
          \tikzset{every loop/.style={min distance=4mm,in=120,out=60,looseness=20}}
           \foreach \i in {1,2,4,5} {
             \draw[thick,->] (\i) to[loop above] (\i);
         };
            \draw[thick](1)-- (2)--(3)--(4)--(5);
        \end{tikzpicture}&
         \\
         \cline{2-4}
        &
        \begin{tikzpicture}[scale=0.5] 
         \node[fill=white] (1) at (0,-0.5) {};
         \node[fill=white] (2) at (1,-0.5) {};
          \node[fill=white] (3) at (2,-0.5) {};
         \node[fill=white] (4) at (3,-0.5) {};
         \node[fill=white] (5) at (4,-0.5) {};
          \tikzset{every loop/.style={min distance=4mm,in=120,out=60,looseness=20}}
           \foreach \i in {1,2,4} {
             \draw[thick,->] (\i) to[loop above] (\i);
         };
            \draw[thick](1)-- (2)--(3)--(4)--(5);
        \end{tikzpicture}&
        \begin{tikzpicture}[scale=0.5] 
         \node[fill=white] (1) at (0,-0.5) {};
         \node[fill=white] (2) at (1,-0.5) {};
          \node[fill=white] (3) at (2,-0.5) {};
         \node[fill=white] (4) at (3,-0.5) {};
         \node[fill=white] (5) at (4,-0.5) {};
          \tikzset{every loop/.style={min distance=4mm,in=120,out=60,looseness=20}}
           \foreach \i in {1,4} {
             \draw[thick,->] (\i) to[loop above] (\i);
         };
            \draw[thick](1)-- (2)--(3)--(4)--(5);
        \end{tikzpicture}&
         \\
         &&
        \begin{tikzpicture}[scale=0.5] 
         \node[fill=white] (1) at (0,-0.5) {};
         \node[fill=white] (2) at (1,-0.5) {};
          \node[fill=white] (3) at (2,-0.5) {};
         \node[fill=white] (4) at (3,-0.5) {};
         \node[fill=white] (5) at (4,-0.5) {};
          \tikzset{every loop/.style={min distance=4mm,in=120,out=60,looseness=20}}
           \foreach \i in {2,3} {
             \draw[thick,->] (\i) to[loop above] (\i);
         };
            \draw[thick](1)-- (2)--(3)--(4)--(5);
        \end{tikzpicture}&
         \\
&&
        \begin{tikzpicture}[scale=0.5] 
         \node[fill=white] (1) at (0,-0.5) {};
         \node[fill=white] (2) at (1,-0.5) {};
          \node[fill=white] (3) at (2,-0.5) {};
         \node[fill=white] (4) at (3,-0.5) {};
         \node[fill=white] (5) at (4,-0.5) {};
          \tikzset{every loop/.style={min distance=4mm,in=120,out=60,looseness=20}}
           \foreach \i in {2,1,5} {
             \draw[thick,->] (\i) to[loop above] (\i);
         };
            \draw[thick](1)-- (2)--(3)--(4)--(5);
        \end{tikzpicture}&
         \\
&&
        \begin{tikzpicture}[scale=0.5] 
         \node[fill=white] (1) at (0,-0.5) {};
         \node[fill=white] (2) at (1,-0.5) {};
          \node[fill=white] (3) at (2,-0.5) {};
         \node[fill=white] (4) at (3,-0.5) {};
         \node[fill=white] (5) at (4,-0.5) {};
          \tikzset{every loop/.style={min distance=4mm,in=120,out=60,looseness=20}}
           \foreach \i in {1,3,4} {
             \draw[thick,->] (\i) to[loop above] (\i);
         };
            \draw[thick](1)-- (2)--(3)--(4)--(5);
        \end{tikzpicture}&
         \\
&&
        \begin{tikzpicture}[scale=0.5] 
         \node[fill=white] (1) at (0,-0.5) {};
         \node[fill=white] (2) at (1,-0.5) {};
          \node[fill=white] (3) at (2,-0.5) {};
         \node[fill=white] (4) at (3,-0.5) {};
         \node[fill=white] (5) at (4,-0.5) {};
          \tikzset{every loop/.style={min distance=4mm,in=120,out=60,looseness=20}}
           \foreach \i in {1,3,5} {
             \draw[thick,->] (\i) to[loop above] (\i);
         };
            \draw[thick](1)-- (2)--(3)--(4)--(5);
        \end{tikzpicture}&
         \\
         \hline         
    \end{tabular}
    \caption{Answer to Problem~\ref{problem-6.5} for $n\leq 5$, where every undirected edge represents a pair of directed arcs in opposite directions. Results in Section~\ref{sec:double-paths} characterize all digraphs with $n\leq 4$ and the ones above the bottom line for $n=5$, whereas those below the line represent cases resolved ad hoc in Example~\ref{ex:n<=5}.}  \label{tab:small-paths}
\end{table}

\subsection{Minimum number of nonzero entries in the pattern that requires the nSSP}

An \emph{$n\times n$ sign pattern} $P$ is an $n\times n$ matrix whose entries are $+$, $-$, or $0$. A matrix $A\in \bR^{n\times n}$ \emph{has the sign pattern $P$} if each entry of $A$ has the sign of the corresponding entry of $P$. By $\calQ(P)\subseteq \bR^{n\times n}$ we denote the set of all matrices of the same dimensions as $P$ that have the sign pattern $P$. If every matrix $A \in \calQ(P)$ has the nSSP, then we say that $P$ \emph{requires the nSSP}.

In~\cite[Problem~6.1]{Arav-24-nSSP} the authors ask whether for $n\geq 3$, the minimum number of nonzero entries in an $n\times n$ sign pattern that requires the STP, is $2n-1$.  In the next example, we show that the answer to this question is negative as well.

\begin{example}\label{ex:2n-2-arcs-notnSSP}
Consider the $3\times 3$ sign pattern $$P=\begin{pmatrix}
    +&0&-\\
    0&-&0\\
    +&0&0
\end{pmatrix}$$
with four nonzero entries. Let $A=\begin{pmatrix}
    a_{11}&0&a_{13}\\
    0&a_{22}&0\\
    a_{31}&0&0
\end{pmatrix} \in \calQ(P)$ be an arbitrary matrix with pattern $P$ and suppose that $X=\begin{pmatrix}
    0&x_{12}&0\\
    x_{21}&0&x_{23}\\
    0&x_{32}&x_{33}
\end{pmatrix}$ satisfies $[A,X\trans]=O$. Then ${\bf x}=(x_{12},x_{21},x_{23},x_{32},x_{33})\trans$ has to be a solution of the homogeneous linear system 
$$\begin{pmatrix}
-a_{13} & 0 &0 & a_{22}&0\\
0&a_{31} & -a_{22}&0&0\\
0&a_{11}-a_{22} & a_{13}&0&0\\
a_{22}-a_{11} & 0 &0 & -a_{31}&0\\
0&0 &0&0&a_{13}\\
0&0 &0&0&a_{31}\\
\end{pmatrix}{\bf x}={\bf 0}.$$
Note that the determinant of the $5\times 5$ submatrix induced by the first five rows is equal to $a_{13}(a_{13}a_{31}+a_{22}(a_{11}-a_{22}))^2
$.
Since $a_{13}\ne 0$ and $a_{13}a_{31}+a_{22}(a_{11}-a_{22})<0$, it follows that the matrix of the system is of full column rank and hence ${\bf x}={\bf 0}$. This implies $X=O$ and so the pattern $P$ with $2n-2$ nonzero entries requires the nSSP.

However, the zero--nonzero $3\times 3$ pattern with the same positions of zeros as the pattern $P$ allows, but does not require the nSSP since for matrices $$A=\begin{pmatrix}
 1 & 0 & 1 \\
 0 & 2 & 0 \\
 1 & 0 & 0 \\
\end{pmatrix}, \; B=\begin{pmatrix}
    1 & 0 & 2 \\
 0 & -1 & 0 \\
 1 & 0 & 0 
\end{pmatrix} \text{ and } Y=\begin{pmatrix}
0 & 0 & 0 \\
 1 & 0 & -1 \\
 0 & 0 & 0 \\
\end{pmatrix}$$
it is easy to verify that $A$ has the nSSP and that  $B\circ Y=[B,Y\trans]=O$.
\end{example}

This result motivates the investigation of~\cite[Problem~6.2]{Arav-24-nSSP}, which we equivalently reformulate in the following problem. 

\begin{problem}\label{problem:minimum-number-arcs-not-require}
Is the minimum number of arcs in a directed graph $G$ on $n$ vertices that requires the nSSP equal to $2n-1$?  
\end{problem}

In Section~\ref{sec:Problem 6.1} we investigate sparse  digraphs. In Theorems~\ref{thm:does not require} and~\ref{thm:does not allow} we find some sufficient conditions for a digraph not to require or not to allow the nSSP. 
These results show that for several families of graphs the answer to Problem~\ref{problem:minimum-number-arcs-not-require} is affirmative, and that one can limit the investigation of the Problem~\ref{problem:minimum-number-arcs-not-require} to the strongly connected digraphs with at least one double arc. This result also enables us to verify that all directed graphs on $n\leq 4$ vertices with at most $2n-2$ arcs do not require the nSSP, see Remark~\ref{rem:small-graphs}.

\subsection{Notation }\label{sec:notation}

In this paper we use the following notation for a directed graph  $G=(V(G),E(G))$, which we call a digraph for short. Every arc $a\in E(G)$ is of the form $a=(u,v)$, where $u,v\in V(G)$. In the case $u=v$, we call $(v,v)\in E(G)$ \emph{a loop} and we denote by  $\loops(G)$ the set of all loops in $G$. If $\loops(G)=\emptyset$, we call the digraph $G$ \emph{loopless digraph}. If a digraph contains arcs $(u,v)$ and $(v,u)$, we call $(u,v)$ a double arc and sometimes replace $(u,v)$ and $(v,u)$ by $\{u,v\}$.

A path from $u$ to $v$ in a directed graph $G$ is a sequence $v_0v_1...v_k$ of vertices in $V(G)$ such that $(v_i,v_{i+1}) \in E(G)$ for all $0\leq i\leq k-1$, while $v_0=u$ and $v_k=v$.
A directed graph $G$ is \emph{strongly connected} if for every pair of vertices $(u, v) \in V(G) \times V(G)$ there is a path from $u$ to $v$. A directed graph
$G$ is \emph{weakly connected}  if the undirected graph obtained from 
$G$ by replacing each arc with an undirected edge (also called the \emph{underlying simple graph}) is connected.
\emph{A strongly connected component} of $G$ is a maximal strongly connected subgraph. We allow a vertex (with or without a loop) to be a strongly connected component. \emph{A strong bridge} is an arc $e\in E(G)$ whose removal results in a graph $G\setminus e$, which has more strongly connected components than $G$.

We denote by 
$$N_G^{+}[v]:=\{u\in V(G)\colon (v,u)\in E(G)\} \text{ and } N_G^{-}[v]:=\{w\in V(G)\colon (w,v)\in E(G)\}$$ the sets of all out-neighbours and in-neighbours of $v\in V(G)$, and $N_G[v]:=N_G^{+}[v]\cup N_G^{-}[v]$.  Note that $v\in\loops(G)$ if and only if $v\in N_G^+[v]\cap N_G^-[v]$.

The complement of $G$ is denoted by $G^c$. If ${\mathcal V} \subseteq V(G)$, we define $G[{\mathcal V}]$ to be the induced subgraph on vertices ${\mathcal V}$. 
If $G$ and $H$ are two directed graphs with the same set of vertices and $E(G)= E(H) \cup \{e_1,\ldots, e_k\}$, we abbreviate $G=H+\{e_1,\ldots, e_k\}$. If $k=1$, we also write $G=H+e_1$, and if all the arcs are double, we write
$H+\{\{u,v\} \colon  u\in {\mathcal U},v\in {\mathcal V} \}:=H+\{(v,u),(u,v)\colon  u\in {\mathcal U},v\in {\mathcal V} \}$
for some ${\mathcal U}, {\mathcal V} \subseteq V(G)$.

For any $n\in\bN$ let $[n]=\{1,2,\ldots,n\}$. 
We use $\bf 0$ to denote the zero vector and by ${\bf e}_i$ the vector with the only nonzero entry equal to 1 in the $i$th position. Let $O$ to denote the zero matrix and $I$ the identity matrix. To emphasise their sizes, we will sometimes use indices as subscripts. Moreover, for two square matrices of the same size, we denote by $A \circ B$ their Haddamard product and by $[A,B]=AB-BA$ their commutator.

The \emph{adjacency matrix of a digraph $G$} with $V(G)=[n]$ is the matrix $A(G) = (a_{ij})\in\{0,1\}^{n\times n}$
where $a_{ij} = 1$
if and only if $(i, j) \in E(G)$.
By ${\mathcal M}(G)$ we denote the set of all real $n\times n$ matrices that have the same zero-nonzero pattern as the adjacency matrix $A(G)$, i.e. $(i,j)$-entry of $B\in {\mathcal M}(G)$ is nonzero if and only if $(i,j)\in E(G)$. By $\overline{\mathcal M}(G)$ we denote the set of all 
real $n\times n$ matrices whose $(i,j)$-entry is zero if $(i,j)\notin E(G)$. Note that for $(i,j)\in E(G)$ matrix $M\in \overline{\mathcal M}(G)$ can have $M_{i,j}=0$ or $M_{i,j}\ne 0$. 
Moreover, if $A\in {\mathcal M}(G)$ and $A\circ X=O$, then
$X\in \overline{\mathcal M}(G^c)$.

   \section{On the minimum number of arcs in a directed graph that requires the nSSP}\label{sec:Problem 6.1}

In this secton we investigate directed graphs $G$ with $n$ vertices and with small number of arcs. We present several examples that give positive answer to Problem~\ref{problem:minimum-number-arcs-not-require}.  First we present a subfamily of Hessenberg patterns, for which Saha et.~al.~in~\cite[Theorem 3.11]{2025-Saha-Tilis-VanderMeulen-VanTuyl-nSMP} showed that they require the nSMP, i.e. for each matrix $A\in{\mathcal M}(G)$, the only matrix satisfying $A\circ X=[A,X\trans]=O$ and ${\rm trace}(X\trans A^{k-1})=0$ for all $k\in [n]$, is $X=O$.
 Their proof establishes also the next result, which shows the difference between the nSSP and the nSMP, and at the same time gives an example of a digraph with  $2n-1$ arcs that requires the nSSP.

\begin{lemma}\label{lem:Hessenberg}
Let $G$ be a digraph with $V(G)=[n]$ and
     $$\{(i,i+1)\colon i\in [n-1]\}\subseteq E(G)\subseteq\{(i,j)\colon i\geq j-1\}$$
and assume $G$ has at least one directed $k$-cycle for each $k,\ 2\leq k \leq n$.

If $\loops(G)=\emptyset$, then $G$ does not allow the nSSP, but it requires the nSMP, and otherwise if $\loops(G)\ne \emptyset$, then $G$ requires the nSSP.
\end{lemma}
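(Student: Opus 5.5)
The statement has three parts, and I would handle them separately. The loopless case is immediate: Lemma~\ref{lem:noloops-all-loops-nssp}(A) says $G$ does not allow the nSSP. That $G$ requires the nSMP in this case is exactly \cite[Theorem 3.11]{2025-Saha-Tilis-VanderMeulen-VanTuyl-nSMP}, since the hypotheses coincide. The real work is the case $\loops(G)\neq\emptyset$.

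For that case, fix $A\in\calM(G)$ and $X$ with $A\circ X=O$ and $[A,X\trans]=O$. Put $Y=X\trans$, so $AY=YA$ and $Y$ is supported on the transpose of the complement of $G$. In particular $Y_{i+1,i}=0$ for every $i\in[n-1]$, because the superdiagonal arcs $(i,i+1)$ lie in $G$.

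The key structural fact is that $A$ is lower Hessenberg with nonzero superdiagonal. Such an $A$ is nonderogatory, since ${\bf e}_1$ is a cyclic vector for $A\trans$. Its centraliser is therefore $\{p(A)\colon \deg p\le n-1\}$, so $Y=p(A)=\sum_{k=0}^{n-1}c_kA^k$.

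I would then show all $c_k$ vanish by a triangular elimination on the entries of $A^k$ that are forced to zero in $Y$.
\begin{itemize}
\item For $k\le n-1$, the entry $(A^k)_{i,i+k}$ equals $a_{i,i+1}\cdots a_{i+k-1,i+k}\neq 0$, and $(A^j)_{i,i+k}=0$ for $j<k$.
\item These superdiagonal-band entries of $Y$ need not be constrained, so this direct triangularity is not enough on its own. Instead I would use the positions where $A$ has arcs: $Y\circ A\trans=O$ means $Y_{j,i}=0$ whenever $(i,j)\in E(G)$.
\item A loop at $\ell$ gives $Y_{\ell\ell}=0$.
\item Each $k$-cycle in $G$ contains exactly one ``downward'' arc $(i,j)$ with $i-j=k-1$, closing the superdiagonal run $j\to j+1\to\cdots\to i$. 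Hence there is an arc $(i_k,i_k-k+1)\in E(G)$, which forces $Y_{i_k-k+1,\,i_k}=0$.
\item That position lies on superdiagonal $k-1$. There $(A^{k-1})$ has a nonzero product of superdiagonal entries, and $A^j$ vanishes for $j<k-1$.
\end{itemize}

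So I obtain the following constraints: $0=Y_{\ell\ell}$ involves $c_0,c_1,\dots$, and for each $m=k-1\in\{1,\dots,n-1\}$ the entry $Y_{r,r+m}$ involves only $c_m,\dots,c_{n-1}$, with nonzero coefficient on $c_m$. Processing from $m=n-1$ down to $m=1$ gives $c_{n-1}=\dots=c_1=0$. Then $Y=c_0I$, and $Y_{\ell\ell}=0$ gives $c_0=0$, so $X=O$.

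This ordering should be checked carefully. The constraint at superdiagonal $m$ contains $c_j$ only for $j\ge m$, since entries of $A^j$ strictly above superdiagonal $j$ vanish. So the system is upper triangular in the right order and back-substitution starting from $c_{n-1}$ works. The main obstacle is verifying that $A^j$ has zero entries above superdiagonal $j$ for a lower Hessenberg $A$, and that the coefficient on the diagonal is exactly the product of superdiagonal entries. Both follow from a short induction on walks: a walk of length $j$ can increase the index by at most $j$, and does so only via superdiagonal arcs.

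The loopless case also fits this picture. There the diagonal constraint is missing, and $Y=I$, which corresponds to $X=I$, is a genuine witness that the nSSP fails. This is consistent with Lemma~\ref{lem:noloops-all-loops-nssp}(A).
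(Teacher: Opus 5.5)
Your proof is correct and follows the same route as the paper. The paper shows that $A\circ X=O$ and $[A,X^{\top}]=O$ force $X^{\top}=c_0I$ by importing the argument from the proof of Saha et al., Theorem~3.11, and then uses a loop to get $c_0=0$. You reconstruct that first step yourself, via the centraliser of the nonderogatory lower Hessenberg matrix $A$ and back-substitution along the subdiagonal entries supplied by the $k$-cycles, and you handle the loopless case with the same two citations the paper uses.

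One step deserves a line of justification: every $k$-cycle is the interval $j\to j+1\to\cdots\to j+k-1$ closed by the single arc $(j+k-1,j)$. This holds because, starting from the cycle's smallest vertex, the only index-increasing arcs are superdiagonal ones. Every vertex between the minimum and the current maximum is then already used, so the first downward arc must return to the start.
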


\begin{proof}
The first part of the statement follows by Lemma~\ref{lem:noloops-all-loops-nssp} and~\cite[Theorem 3.11]{2025-Saha-Tilis-VanderMeulen-VanTuyl-nSMP}. If $G$ has at least one loop, we follow the proof of the mentioned theorem to observe that if $A\in{\mathcal M}(G)$ has at least one 
nonzero entry on each subdiagonal, then any matrix $X \in \overline{\mathcal{M}}(G^c)$ satisfying $[A,X\trans]=0$ is of the form $X\trans = c_0 I$ for some $c_0 \in \mathbb{R}$. 
Since $A$ has at least one nonzero entry on the diagonal as well, it follows that $c_0 = 0$, and hence $X = O$.  Consequently, $A$ has the nSSP and $G$ requires the nSSP. 
\end{proof}

By considering the structural decomposition of a digraph, one can extend the nSSP property from subgraphs, as stated in the following theorem from~\cite{Arav-24-nSSP}.

\begin{lemma}\label{lem:arav-reducible}\cite[Theorem~3.10]{Arav-24-nSSP}
    Suppose that a digraph $G$ is not strongly (weakly, respectively) connected and has strongly (weakly, respectively) connected components $G_1,\ldots,G_k$, $k\geq 2$. A matrix $A\in {\mathcal M}(G)$ has the nSSP if and only if $A[G_i]$ has the nSSP and $\spec(A[G_i]) \cap \spec(A[G_j]) =\emptyset$ for all distinct $i,j\in [k]$.
\end{lemma}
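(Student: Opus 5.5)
We would order the components so that $A$ is block upper triangular, and handle the two directions separately. Sufficiency is an induction on Sylvester equations; for necessity we only give a plan, and the strongly connected case is the part we have not carried out.

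\emph{Setup.} Order the strongly connected components $G_1,\ldots,G_k$ topologically, so that every arc between distinct components goes from some $G_i$ to some $G_j$ with $i<j$. In this labelling $A\in{\mathcal M}(G)$ is block upper triangular with diagonal blocks $A_{ii}=A[G_i]$. In the weakly connected case $A$ is even block diagonal. Put $Y=X\trans$ and partition it conformally into blocks $Y_{ij}$. The conditions $A\circ X=O$ and $[A,X\trans]=O$ then read $[A,Y]=O$, with $Y_{ii}$ vanishing on the positions of $A_{ii}\trans$.

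\emph{Sufficiency.} Assume each $A_{ii}$ has the nSSP and the spectra of the diagonal blocks are pairwise disjoint. We show $Y=O$ in three passes.
\begin{enumerate}
\item \emph{Blocks below the diagonal.} We induct on the distance of $Y_{ij}$ ($i>j$) from the bottom-left corner. The $(i,j)$ block of $AY-YA$ is $\sum_{l\ge i}A_{il}Y_{lj}-\sum_{l\le j}Y_{il}A_{lj}$. Every term other than $A_{ii}Y_{ij}-Y_{ij}A_{jj}$ involves a block already shown to be zero. Since $\spec(A_{ii})\cap\spec(A_{jj})=\emptyset$, the Sylvester equation forces $Y_{ij}=O$.
\item \emph{Diagonal blocks.} Once $Y$ is block upper triangular, the $(i,i)$ block of the commutator collapses to $[A_{ii},Y_{ii}]=O$. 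Together with the Hadamard condition restricted to $G_i$, the nSSP of $A_{ii}$ gives $Y_{ii}=O$.
\item \emph{Blocks above the diagonal.} For $i<j$ we induct on $j-i$. The $(i,j)$ block of the commutator is $A_{ii}Y_{ij}-Y_{ij}A_{jj}$ plus terms $A_{il}Y_{lj}$ and $Y_{il}A_{lj}$ with $i<l\le j$ or $i\le l<j$. These involve either zero diagonal blocks or blocks with smaller gap, so they vanish. Disjointness of spectra again gives $Y_{ij}=O$.
\end{enumerate}
Hence $X=O$.

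\emph{Necessity in the weakly connected case.} If some $A_{ii}$ fails the nSSP with witness $X_i$, embedding $X_i$ as a diagonal block gives a nonzero witness for $A$, since $A$ is block diagonal. If $\lambda\in\spec(A_{ii})\cap\spec(A_{jj})$, choose $Z\ne O$ with $A_{jj}Z=ZA_{ii}$ (for instance a rank-one product of eigenvectors) and place it in the $(j,i)$ block of $Y$. The Hadamard condition holds automatically because $A$ has zero blocks off the diagonal, and $[A,Y]=O$.

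\emph{Necessity in the strongly connected case (main obstacle).} Cross terms such as $A_{lj}Z$ and $ZA_{im}$ prevent this naive embedding. We do not have a construction of an explicit witness here. Instead we would pass to the equivalent transversality/STP formulation from~\cite{Arav-24-nSSP}: $A$ has the nSSP if and only if $\{[A,K]\}+\overline{\mathcal M}(G)=\bR^{n\times n}$. We would then try to show that the set of $(i,i)$ blocks (respectively $(j,i)$ blocks, $j>i$) of matrices in this sum is a proper subspace. This would use the fact that $\overline{\mathcal M}(G)$ has zero blocks below the diagonal, and that the relevant block of $[A,K]$, modulo contributions from blocks further from the diagonal, is $A_{jj}K_{ji}-K_{ji}A_{ii}$ (respectively $[A_{ii},K_{ii}]$). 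Choosing the pair $(j,i)$ extremal in the block ordering is the key point we expect to need here, and making this reduction rigorous is the step we anticipate to be hardest.
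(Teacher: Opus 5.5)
\textbf{Gap: necessity in the weakly-but-not-strongly connected case.} Your sufficiency argument (the three Sylvester passes) and your necessity argument for the block-diagonal, weakly disconnected case are correct. In the latter, if the common eigenvalue is non-real then $uv\trans$ is complex, so take its real or imaginary part.

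The genuine gap is necessity when $G$ is weakly but not strongly connected, which you leave open. Your transversality plan would also fail as formulated. The $(i,i)$ block of $[A,K]+M$ contains the cross terms $\sum_{l>i}A_{il}K_{li}-\sum_{l<i}K_{il}A_{li}$. The blocks of $K$ appearing there are unconstrained, so the projection onto a single block is often all of the block space. For instance, take $A=\begin{pmatrix}0&b\\0&c\end{pmatrix}$, a loopless vertex with an arc to a looped vertex. The $(1,1)$ entry of $[A,K]$ is $bK_{21}$, which is arbitrary, yet $A$ fails the nSSP. The same happens for a single $(j,i)$ block. Choosing an extremal pair does not help either, since the pair sharing an eigenvalue need not be extremal.

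\textbf{The missing idea.} It is the counting argument the paper uses in proving Corollary~\ref{cor:reducible-not-allow-nSSP}; the lemma itself is only quoted from \cite{Arav-24-nSSP}.

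Let $\mathcal U$ be the space of strictly block upper triangular matrices, with $N=\dim\mathcal U$. Since $A$ is block upper triangular, $[A,\cdot]$ maps $\mathcal U$ into $\mathcal U$. Moreover $A\trans\circ Y=O$ for every $Y\in\mathcal U$, so the Hadamard condition is automatic there.

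\emph{A diagonal block fails the nSSP.} Suppose $A_{ii}$ fails the nSSP with witness $X_i$. Let $D$ carry $X_i\trans$ in its $(i,i)$ block and zeros elsewhere. Then $[A,D]\in\mathcal U$, because $[A_{ii},X_i\trans]=O$. So $[A,\alpha D+U]=O$ is a homogeneous system of $N$ equations in the $N+1$ unknowns $(U,\alpha)$. A nontrivial solution gives $Y=\alpha D+U\neq O$ (if $\alpha=0$ then $U\neq O$), and $A\trans\circ Y=O$ holds.

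\emph{Two diagonal blocks share an eigenvalue.} Suppose $\lambda\in\spec(A_{ii})\cap\spec(A_{jj})$ with $i<j$. Group the components as $\{G_1,\dots,G_i\}$ and $\{G_{i+1},\dots,G_k\}$, so that $A=\begin{pmatrix}A'&B'\\O&C'\end{pmatrix}$. By block triangularity, $\lambda\in\spec(A')\cap\spec(C')$. Hence the real Sylvester operator $W\mapsto A'W-WC'$ is singular. For any $W\neq O$ in its kernel, $Y=\begin{pmatrix}O&W\\O&O\end{pmatrix}\in\mathcal U$ commutes with $A$.

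These two constructions complete necessity, and no transversality argument is needed.
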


The next Corollary follows from  Lemma~\ref{lem:arav-reducible} and shows that graphs that do not allow the nSSP are abundant. We state it in our setting of directed graphs and give an alternative proof.

\begin{corollary} \label{cor:reducible-not-allow-nSSP}
    Let $G$ be a directed graph, such that $V(G)=V(G_1)\cup V(H)$ and
       either $N_G^+[u]\subseteq V(G_1)$ for all $u\in V(G_1)$ or $N_G^-[u]\subseteq V(G_1)$ for all $u\in V(G_1)$.
    
      If $H$ does not allow the nSSP (does not require the nSSP, respectively),
then $G$ does not allow the nSSP (does not require the nSSP, respectively).
\end{corollary}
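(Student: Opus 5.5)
The plan is to deduce the corollary directly from Lemma~\ref{lem:arav-reducible}, applied both to $G$ and to $H$. I read $V(G_1)$ and $V(H)$ as a partition of $V(G)$ with both parts nonempty, and $H=G[V(H)]$. By symmetry it suffices to treat the case $N_G^+[u]\subseteq V(G_1)$ for all $u\in V(G_1)$, i.e.\ no arc of $G$ leaves $V(G_1)$. For the other case, note that $A$ has the nSSP if and only if $A\trans$ does: $A\circ X=O$ iff $A\trans\circ X\trans=O$, and $[A\trans,X]=-([A,X\trans])\trans$. Passing to transposes reverses all arcs and swaps in- and out-neighbourhoods, and it preserves "does not allow'' and "does not require''.

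The first step is structural. Since no arc goes from $V(G_1)$ to $V(H)$, no directed path from a vertex of $G_1$ reaches $V(H)$. Hence every strongly connected component of $G$ lies entirely in $V(G_1)$ or entirely in $V(H)$. In particular $G$ is not strongly connected. Moreover, the strongly connected components of $G$ contained in $V(H)$ are exactly the strongly connected components $H_1,\dots,H_m$ of $H$, because paths between vertices of $V(H)$ never pass through $V(G_1)$. So the components of $G$ are $H_1,\dots,H_m$ together with the components of $G[V(G_1)]$.

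The second step applies Lemma~\ref{lem:arav-reducible}. Let $A\in\calM(G)$ and put $B=A[V(H)]\in\calM(H)$; then $A[H_i]=B[H_i]$. Suppose $A$ has the nSSP. By the lemma applied to $G$, every $B[H_i]$ has the nSSP and the spectra $\spec(B[H_i])$ are pairwise disjoint. If $m\ge 2$, the lemma applied to $H$ then gives that $B$ has the nSSP. If $m=1$, then $B=B[H_1]$ itself, so again $B$ has the nSSP. Thus
\[
A \text{ has the nSSP}\ \Longrightarrow\ A[V(H)] \text{ has the nSSP}.
\]
If $H$ does not allow the nSSP, no $A\in\calM(G)$ can have it, so $G$ does not allow the nSSP. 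If $H$ does not require the nSSP, pick $B\in\calM(H)$ without the nSSP. Extend it to some $A\in\calM(G)$ by choosing arbitrary nonzero values on the remaining arcs of $G$; then $A[V(H)]=B$, so $A$ lacks the nSSP and $G$ does not require the nSSP.

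The main point to handle carefully is the identification of the strongly connected components of $G$ inside $V(H)$ with those of $H$, together with the degenerate case $m=1$, where Lemma~\ref{lem:arav-reducible} cannot be applied to $H$ itself. If a self-contained alternative proof is preferred, I would instead try to extend a nonzero witness $Y$ for $B$ to a witness $X$ for $A$ directly. The naive block extension $X\trans=O\oplus Y\trans$ produces an off-diagonal term $Y\trans C$ in $[A,X\trans]$, where $C$ is the block of $A$ with rows indexed by $V(H)$ and columns by $V(G_1)$. This term must be cancelled by a suitable off-diagonal block, and that is why I would rely on the lemma route.
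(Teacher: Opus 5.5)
Your argument is correct, but it takes a different route from the proof the paper writes out. The paper remarks that the corollary follows from Lemma~\ref{lem:arav-reducible}, which is your route, but it then deliberately gives an alternative proof.

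Your argument runs as follows. The one-way arc condition forces every strongly connected component of $G$ to lie in $V(G_1)$ or in $V(H)$, and those inside $V(H)$ are exactly the components of $H$. Applying the lemma to $G$, and then to $H$ (or trivially when $m=1$), shows that $A$ has the nSSP only if $A[V(H)]$ does.

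The paper avoids strongly connected components and the external theorem entirely. In the case with no arcs from $V(H)$ to $V(G_1)$, it writes $A=\begin{pmatrix} A_1 & B\\ O & C\end{pmatrix}$ with $C\in\calM(H)$. It takes a nonzero $Y_0$ with $C\circ Y_0=[C,Y_0\trans]=O$ and looks for a witness of the form $X=\begin{pmatrix} O & O\\ Z & \alpha Y_0\end{pmatrix}$. Then $A\circ X=O$ holds automatically, and the only remaining condition is $A_1Z\trans - Z\trans C + \alpha B Y_0\trans = O$. After vectorisation this is a homogeneous system of $k(n-k)$ equations in the $k(n-k)+1$ unknowns $(\bfz,\alpha)$, so it has a nontrivial solution, and any such solution gives $X\neq O$.

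This is precisely the repair of the obstruction you found with the naive extension $O\oplus Y\trans$. The extra off-diagonal block $Z$ absorbs the cross term. The extra scalar $\alpha$ guarantees solvability by a dimension count: if the Sylvester operator $Z\trans\mapsto A_1Z\trans-Z\trans C$ is singular, a witness exists with $\alpha=0$; otherwise you solve for $Z$ with $\alpha=1$.

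Your route is shorter given the cited theorem and makes the spectral mechanism (shared eigenvalues between blocks) visible. However, it relies on the full strength of Lemma~\ref{lem:arav-reducible} and needs the component bookkeeping, including the degenerate case $m=1$. The paper's route is self-contained and elementary, and it needs no information about the connectivity structure of $G_1$ or $H$.
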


\begin{proof}
    Let $k=|V(H)|$ and let us first assume that $N_G^-[u]\subseteq V(G_1)$ for all $u\in V(G_1)$. Then every matrix $A\in {\mathcal M}(G)$ has a form
    $$A=\begin{pmatrix}
        A_1 & B\\
        O & C
    \end{pmatrix},$$
    where $C\in{\mathcal M}(H)$. Since $H$ does not allow the nSSP, it follows that there exists $Y_0$, such that $C\circ Y_0=[C,Y_0\trans]=O$. Let $Z=(z_{ij})\in \bR^{k\times (n-k)}$ be a matrix with $k(n-k)$ parameters and let  $\bfz=\vecop(Z\trans)\in \bR^{k(n-k)}$ be a vector obtained by stacking the columns of $Z\trans$ one above the other.
    
    We claim there exists a nonzero matrix $X:=\begin{pmatrix}
        O& O\\
        Z & \alpha Y_0
    \end{pmatrix}$ that solves  $A\circ X=[A,X\trans]=O$. It is straightforward to check that if $B=O$, then the two equations are satisfied for $X$ with $Z=O$. 
    Otherwise, note that the first equality $A\circ X=O$ holds for all $Z$ and $\alpha$. 
    Observe that
    $[A,X\trans]=\begin{pmatrix}
        O & A_1 Z\trans+\alpha B Y_0\trans-Z\trans C\\
        O & O
    \end{pmatrix}$, and so we have to prove that \begin{equation}\label{eq:top-left-corner} 
       A_1 Z\trans I_k-I_{n-k}Z\trans C+B(\alpha I_k) Y_0\trans=O
    \end{equation}
    has nontrivial solution in variables $\alpha\in\bR$ and $\bfz\in \bR^{k(n-k)}$. By vectorization,  equation~\eqref{eq:top-left-corner}
    is equivalent to 
    $(I_k\otimes A_1-C\trans \otimes I_{n-k})\bfz  +(Y_0\otimes B) (\alpha \vecop(I_k))=0$.
    Note that all entries of vector $\alpha \vecop(I_k)$ are equal to either zero or $\alpha$, and so $(Y_0\otimes B) (\alpha \vecop(I_k))=\alpha \bfb$, where $\bfb$ is the sum of those columns of matrix $Y_0\otimes B$, that are in the position of nonzero entries in $\vecop(I_k)$. This implies solving~\eqref{eq:top-left-corner} is equivalent to solving a homogeneous linear system
    $$\begin{pmatrix}
        I_k\otimes A_1-C\trans\otimes I_{n-k} && \bfb
    \end{pmatrix}\begin{pmatrix}
        \bfz\\
        \alpha
    \end{pmatrix}={\bf 0}.$$
   
    Since the matrix of this system has $k(n-k)$ rows and $k(n-k)+1$ columns, it follows that it has a nontrivial solution, and so $A$ does not have the nSSP.

    We prove the case $N_G^+[u]\subseteq V(G_1)$ similarly.
\end{proof}

In the next theorem we present several families of graphs that do not require the nSSP and can be used as building blocks in Corollary~\ref{cor:reducible-not-allow-nSSP}.

\begin{theorem}\label{thm:does not require}
    Suppose a digraph $G$ has one of the following properties:
    \begin{enumerate}\renewcommand{\theenumi}{\Alph{enumi}}
        \item\label{1:not-require-reducible}  $G$ is not strongly connected.
        \item\label{2:not-require-no-double-arcs}  $G$ has no double arcs.
      
        \item\label{3:not-require-bipartite} 
        $V(G)=[2m]$ with $\loops(G)=[m]$ for some $m\geq 2$, and
        \begin{itemize}
            \item $(i+m,i)$ for all $i\in [m]$, 
            \item $(i,i+m+1) \!\!\!\mod 2m$ for all $i\in[m]$, and
            \item a subset of $\{(i,j+m)\colon i,j\in [m]\}$, such that if $(i,j+m)\in E(G)$ for some $i\ne j\in[m]$ then $(j,i+m)\notin E(G)$.
        \end{itemize}
    \end{enumerate}
    Then $G$ does not require the nSSP.
\end{theorem}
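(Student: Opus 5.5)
For case (\ref{1:not-require-reducible}) I will reuse the block argument from the proof of Corollary~\ref{cor:reducible-not-allow-nSSP}. Order the strongly connected components so that every $A\in\calM(G)$ is block upper triangular,
$$A=\begin{pmatrix} A_1 & B\\ O & C\end{pmatrix},$$
where $A_1$ belongs to one block of components (a ``sink part'') and $C$ to the rest. Take $X=\begin{pmatrix} O & O\\ Z & O\end{pmatrix}$. Then $A\circ X=O$ holds automatically, and $[A,X\trans]=O$ reduces to the Sylvester equation $A_1Z\trans-Z\trans C=O$. This equation has a nonzero solution $Z$ exactly when $\spec(A_1)\cap\spec(C)\neq\emptyset$. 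This is the same criterion as in Lemma~\ref{lem:arav-reducible}.

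So it suffices to produce one $A\in\calM(G)$ whose two diagonal blocks share an eigenvalue. I would first arrange that each block has a real eigenvalue. If a block contains a cycle, put weight $1$ on the cycle arcs and $\varepsilon$ on all other arcs. For $\varepsilon$ small the simple eigenvalue $1$ of the cycle permutation survives as a real eigenvalue. A block that is a single vertex has the real eigenvalue $a_{vv}$ or $0$. Next I would make the two real eigenvalues coincide by scaling one block by a nonzero real number, which does not change its pattern. The subcase I expect to be the main obstacle is when one block is forced to have eigenvalue $0$, for instance a loopless single vertex. Then the other block must be made singular. My plan there is to vary one entry: the determinant is affine in that entry, so it has a zero at a nonzero value of the entry unless the determinant is independent of it. In that case I would instead choose the cycle in the weighting argument so that it avoids the forced-zero block.

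For case (\ref{2:not-require-no-double-arcs}), fix $\lambda\neq 0$ and choose $A\in\calM(G)$ with every loop entry equal to $\lambda$. Put $X\trans=A-\lambda I$, so that $[A,X\trans]=O$ trivially. It remains to check that $A\circ X=O$.
\begin{itemize}
\item Off the diagonal, $X_{ij}=a_{ji}$ is nonzero only if $(j,i)\in E(G)$. Since $G$ has no double arcs, $(i,j)\notin E(G)$, so $a_{ij}=0$.
\item On the diagonal, $X_{vv}=0$ at looped vertices, and $X_{vv}=-\lambda$ at unlooped vertices, where $a_{vv}=0$.
\end{itemize}
Hence $A\circ X=O$, and $X\neq O$ as soon as $G$ has an arc or an unlooped vertex. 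If $G$ has neither, then $G$ consists of a single looped vertex; this graph is strongly connected and has no double arcs, and $[a]$ does have the nSSP. So that trivial graph must be handled separately, or excluded, by the convention $n\ge 2$.

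For case (\ref{3:not-require-bipartite}), I would write $A$ in $m\times m$ blocks as
$$A=\begin{pmatrix} D+N & B\\ C & O\end{pmatrix}.$$
Here $D$ is the diagonal of loop entries, $N$ is the single entry $(m,1)$ coming from the arc $m\to m+1 \bmod 2m$, $C$ is diagonal and encodes the arcs $(i+m,i)$, and $B$ contains the arcs $(i,i+m+1)$ together with the optional arcs $(i,j+m)$.

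Following the idea of case (\ref{2:not-require-no-double-arcs}), I would take $D=\lambda I$ and look for $X\trans$ built from $A-\lambda I$ with its pattern transposed and suitably masked. Concretely, I would make the ansatz
$$X\trans=\begin{pmatrix} O & C'\\ B' & E\end{pmatrix},$$
where $B'$ is supported on the transposes of positions not in $B$, $C'$ is diagonal, and $E$ is arbitrary. The antisymmetry condition ``$(i,j+m)\in E(G)\Rightarrow(j,i+m)\notin E(G)$'' is exactly what should let $B'$ be supported on the transpose of the support of $B$ without meeting it, in analogy with case (\ref{2:not-require-no-double-arcs}).

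I would then write $[A,X\trans]=O$ blockwise as a linear system in the entries of $B'$, $C'$ and $E$, choose $A$ with specific values (for example all arc values $1$ and $\lambda$ generic), and show that the number of unknowns exceeds the rank. Finding this explicit nonzero $X$, or the right count, is the main risk in this case.
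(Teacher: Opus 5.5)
There are genuine gaps in part (C) and in one subcase of part (A). Part (B) is fine.

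\textbf{Part (C).} What you give is an ansatz, not a construction, and the ansatz cannot work in general. The arcs $(i+m,i)$ make $A_{m+i,i}\neq 0$, so $A\circ X=O$ forces $X_{m+i,i}=0$. These are exactly the diagonal entries of your block $C'$, the upper right block of $X\trans$. So ``$C'$ diagonal'' means $C'=O$.

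With $C'=O$, the equation $[A,X\trans]=O$ reads $BB'=O$, $BE=O$, $B'B=O$ and $B'(D+N)+EC=O$. Whenever the pattern of $B$ forces it to be nonsingular (e.g.\ when the arcs from $[m]$ to $\{m+1,\dots,2m\}$ form a permutation pattern), you get $B'=E=O$, i.e.\ $X=O$.

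The missing idea is to reduce (C) to (B). The paper reads the second bullet with the wrap-around inside $\{m+1,\dots,2m\}$, as in Figure~\ref{fig:Cn-not-require}. So there is no arc $(m,1)$, and the adjacency matrix is $A=\begin{pmatrix} I & B\\ I & O\end{pmatrix}$. Here $B$ is the adjacency matrix of a digraph $H$ on $[m]$ without double arcs, by the third bullet. Part (B) applied to $H$ gives $Y\neq O$ with $B\circ Y=[B,Y\trans]=O$. Then $X=\begin{pmatrix} O & -Y\\ -B\trans Y & Y\end{pmatrix}$ works:
\begin{itemize}
\item $[A,X\trans]=\begin{pmatrix} Y\trans B-BY\trans & BY\trans-Y\trans B\\ O & O\end{pmatrix}=O$;
\item $A\circ X=O$, because $(B\trans Y)_{ii}=\sum_k (B\circ Y)_{ki}=0$.
\end{itemize}
The lower left block $-B\trans Y$ of $X$ is a nonzero matrix with zero diagonal, which is exactly what your ansatz excludes. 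If you keep your extra block $N$ at position $(m,1)$, i.e.\ read the bullet literally, this construction would additionally need $[N,BY\trans]=O$.

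\textbf{Part (A).} Replacing the paper's Perron roots by a perturbed cycle, and re-deriving the relevant half of Lemma~\ref{lem:arav-reducible} via the Sylvester equation, is fine in two situations: when two components contain an arc (loops included), or when two components are loopless single vertices.

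Your remedy for the subcase you flag does not work. Suppose $G$ has exactly two components: a loopless vertex $v$, and a component $K$ for which $\det A[K]$ is a single monomial (one cycle cover). Then $\spec(A[\{v\}])=\{0\}$ and $0\notin\spec(A[K])$ for every $A\in\calM(G)$, so no $X$ of the form $\begin{pmatrix} O & O\\ Z & O\end{pmatrix}$ exists. For example, take $E(G)=\{(1,2),(2,2)\}$. Then $\det A[K]=a_{22}$ depends on $a_{22}$ but vanishes only at $a_{22}=0$. An affine function $\alpha t+\beta$ has a nonzero root only if $\beta\neq 0$, so your affine-determinant claim fails here.

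In this subcase $X$ must have a nonzero diagonal entry at $v$. The block $A[\{v\}]=[0]$ itself lacks the nSSP, so you can conclude by Lemma~\ref{lem:arav-reducible}, or by Corollary~\ref{cor:reducible-not-allow-nSSP} with $H=G[\{v\}]$. In the example, $A=\begin{pmatrix} 0 & a_{12}\\ 0 & a_{22}\end{pmatrix}$ and $X=\begin{pmatrix} a_{22} & 0\\ -a_{12} & 0\end{pmatrix}$ satisfy $A\circ X=[A,X\trans]=O$. The paper's Perron argument has the same hole, since the Perron root of $[0]$ is $0$, and the same remark closes it.

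\textbf{Part (B).} This is the paper's argument, with $\lambda=1$ and $X=I-A\trans$. Your caveat is correct: for a single looped vertex $X=O$, and (B) is in fact false there (Table~\ref{tab:small-paths} lists this graph as requiring the nSSP). Several isolated looped vertices also give $X=O$, but those are covered by (A).
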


\begin{proof}

\begin{enumerate}\renewcommand{\theenumi}{\Alph{enumi}}
\item Suppose $G$ has strongly connected components $G_1,\ldots,G_k$, $k\geq 2$. Let $A_1$ and $A_2$ be adjacency matrices of $G_1$ and $G_2$, respectively. Since $A_1$ and $A_2$ are entry-wise nonnegative and irreducible, they have a real positive (Perron) eigenvalue $\lambda_1$ and $\lambda_2$, respectively. 
If $A\in {\mathcal M}(G)$ is a matrix such that $A[G_1]=\lambda_2 A_1$ and $A[G_2]=\lambda_1 A_2$, then $\lambda_1\lambda_2\in \spec(A[G_1]) \cap \spec(A[G_2])$ and so by Lemma~\ref{lem:arav-reducible} matrix $A$ does not have the nSSP. 
    \item Let $A\in{\mathcal{M}}(G)$ be the adjacency matrix of $G$. Let us define $X:=I-A\trans\in\{0,1\}^{n\times n}$. Note that since $A$ has no double arcs $A\circ X=A\circ(I-A\trans)=O$, 
    and $[A,X\trans]=[A,I-A]=O$, therefore $A$ does not have the nSSP. 
    \item Let $A$ be the adjacency matrix of $G$ and note that it is of the form $$A=\begin{pmatrix}
        I & B\\
        I & 0
    \end{pmatrix},$$
    where $B$ is the adjacency matrix of a graph $H$ with $m$ vertices and without double arcs. By~(\ref{2:not-require-no-double-arcs}) there exists a nonzero matrix $Y$, such that $B\circ Y=[B,Y\trans]=O$. Let $X=\begin{pmatrix}
        O&-Y\\
        -B\trans Y&Y
    \end{pmatrix}$.
    It is straightforward to check that $[A,X\trans]=O$. Moreover, since $B\circ Y=O$, it follows that $(B\trans Y)\circ I=O$ and therefore $A\circ X=O$ as well. This implies that $A$ does not have the nSSP.
 \end{enumerate}
 In each of the cases we constructed a matrix $A$ that does not have the nSSP and so $G$ does not require the nSSP.
\end{proof}

Note that every strongly connected graph is weakly connected and so if a digraph $G$ is not weakly connected, then by Theorem~\ref{thm:does not require}(\ref{1:not-require-reducible}) implies it does not require the nSSP. 

\begin{example}
Note that a digraph which is not strongly connected, may still allow the nSSP. 
 For example, for a digraph $G$ on Figure~\ref{fig:directed-graph-reducible-allow-nSSP}, it is straightforward to check that the matrix
 $A=\begin{pmatrix}
 -1 & 1 & 0 \\
 1 & 0 & 1 \\
 0 & 0 & 1 
 \end{pmatrix}\in{\mathcal M}(G)
 $ has the nSSP, so it allows the nSSP but does not require the nSSP by Theorem~\ref{thm:does not require}. 
On the other hand, let $G'$ denote the same digraph without a loop on vertex $3$, see Figure~\ref{fig:directed-graph-reducible-allow-nSSP}. Let $G_1=G'[\{1,2\}]$ and $H=G'[\{3\}]$. Since $H$ does not allow the nSSP, it follows by Corollary~\ref{cor:reducible-not-allow-nSSP} that $G'$ does not allow it either.

 \begin{figure}[h]
\begin{tikzpicture}
\foreach \i in {1,...,3} {
    \node[fill=white] (\i) at (\i,0) {};
    \node[rectangle, draw=none] at (\i,-0.7) {$\i$};
};

\tikzset{every loop/.style={min distance=4mm,in=120,out=60,looseness=30}}
\foreach \i in {1,3} {
     \draw[thick,->] (\i) to[loop above] (\i);
};

\draw[->, thick] (1) to[bend left] (2);
\draw[->, thick] (2) to[bend left] (1);
\draw[->, thick] (2) -- (3);
\node[rectangle, draw=none] at (2,-1.5) {$G$};
\end{tikzpicture}
\qquad\qquad
\begin{tikzpicture}
\foreach \i in {1,...,3} {
    \node[fill=white] (\i) at (\i,0) {};
    \node[rectangle, draw=none] at (\i,-0.7) {$\i$};
};

\tikzset{every loop/.style={min distance=4mm,in=120,out=60,looseness=30}}
\foreach \i in {1} {
     \draw[thick,->] (\i) to[loop above] (\i);
};

\draw[->, thick] (1) to[bend left] (2);
\draw[->, thick] (2) to[bend left] (1);
\draw[->, thick] (2) -- (3);
\node[rectangle, draw=none] at (2,-1.5) {$G'$};
\end{tikzpicture}
\caption{An example of a weakly but not strongly connected digraph $G$ that allows but does not require the nSSP, and $G'$ that does not allow the nSSP.}\label{fig:directed-graph-reducible-allow-nSSP}
\end{figure}
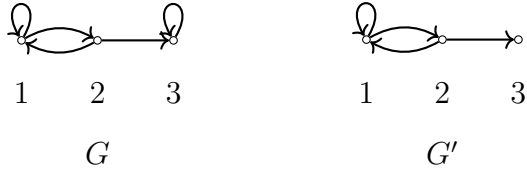

\end{example}

\begin{example}
 Theorem~\ref{thm:does not require}(\ref{3:not-require-bipartite}) provides a family of strongly connected digraphs with possible double arcs that do not require the nSSP. For example, for $m=4$, a family of supergraphs of a directed cycle is shown of Figure~\ref{fig:Cn-not-require}, where the black arcs are forced by the first two items and the blue arcs are optional by the third item.

   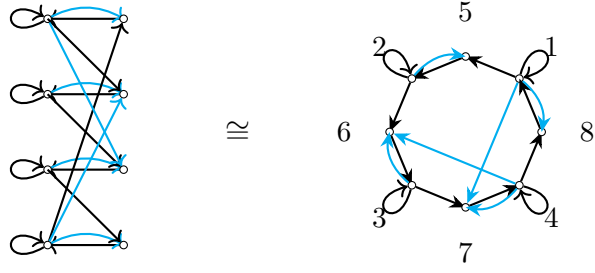
\begin{figure}[htb]
\begin{tikzpicture}
\tikzset{every loop/.style={min distance=4mm,in=150,out=210,looseness=30}}

\foreach \i in {5,...,8} {
    \draw[->, thick] (1,\i-4) -- (0,\i-4);
    \draw[->, thick,cyan] (0,\i-4) to[bend left] (1,\i-4);
     \node[fill=white] (\i) at (1,\i-4) {};
};
\foreach \i in {1,...,4} {
    \node[fill=white] (\i) at (0,\i) {};
    \draw[thick,->] (\i) to[loop above] (\i);
};
\foreach \i in {5,6,7} {   
\draw[->, thick]  (0,\i-3)--(\i);
}
\draw[->, thick] (1) -- (8);
\draw[->, thick,cyan] (4)-- (6);
\draw[->, thick,cyan] (1)-- (7);

\begin{scope}[shift={(5.5,2.5)}]
\foreach \i in {1,...,8} {
 \node[fill=white] (\i) at (\i*45:1) {};
  \draw[-{Stealth[length=6pt,width=5pt]}, thick]  (\i) -- (45+\i*45:1);
};
\foreach \i/\lab in {1/1, 2/5, 3/2, 4/6, 5/3, 6/7, 7/4, 8/8} {
    \pgfmathsetmacro{\angle}{\i*45}
    \node[draw=none] at ($( \i ) + (\angle:0.6)$) {{\small \lab}};
}
\foreach \i in {1,3,5,7} {
    \pgfmathsetmacro{\angle}{\i*45}
    \draw[->, thick]
        (\i) to[out=\angle+30, in=\angle-30, looseness=30] (\i);
    \draw[-{Stealth[length=6pt,width=5pt]},thick,cyan] (\i) to[bend left] (-45+\i*45:1);
};
\node[draw=none] at (-3,0) {$\cong$};
\draw[-{Stealth[length=6pt,width=5pt]},thick,cyan] (7) -- (4);
\draw[-{Stealth[length=6pt,width=5pt]},thick,cyan] (1) -- (6);
\end{scope}
\end{tikzpicture}
\caption{An example of a family of supergraphs of a directed cycle that does not require the nSSP; the black arcs have to be in a graph, but the blue arcs are optional.}\label{fig:Cn-not-require}
\end{figure}
\end{example}

The following result provides some sufficient conditions under which a digraph does not allow the nSSP, thereby offering a partial answer to Problem~\ref{problem:minimum-number-arcs-not-require}.  Note that if a digraph does not allow the nSSP, then also every sign pattern that corresponds to this digraph does not allow the nSSP, so the next theorem also presents a family of sign patterns for which~\cite[Problem 6.3]{Arav-24-nSSP} is resolved.

\begin{theorem}\label{thm:does not allow}
Suppose $G$ is a directed graph on $n$ vertices with one of the following properties:

\begin{enumerate}\renewcommand{\theenumi}{\Alph{enumi}}
 \item $|G|\geq 2$, $G$ has no double arcs and $|\loops(G)|=1$.
\item\label{numberarcs} $|E(G)|\leq 2n-2$, and its  underlying simple graph is a tree.
\end{enumerate}
Then $G$ does not allow the nSSP. 
\end{theorem}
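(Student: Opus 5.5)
For part~(A) I would write down the witness matrix directly. Let $v$ be the unique looped vertex, take an arbitrary $A\in\calM(G)$ and put $a:=a_{vv}\neq 0$. I would set
$$X:=A\trans-aI .$$
The key steps, in order, are as follows.
\begin{itemize}
\item Compute $A\circ X$ entrywise. For $i\neq j$ the $(i,j)$ entry is $a_{ij}a_{ji}$, which vanishes because $G$ has no double arcs. On the diagonal the entry is $a_{ii}(a_{ii}-a)$, which vanishes because $a_{ii}=0$ for $i\neq v$ and $a_{vv}=a$.
\item Compute the commutator: $[A,X\trans]=[A,A-aI]=O$ holds trivially.
\item Check that $X\neq O$. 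Since $|G|\ge 2$, there is a vertex $w\neq v$, and then $X_{ww}=-a\neq 0$.
\end{itemize}
Hence no $A\in\calM(G)$ has the nSSP. This part is routine.

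For part~(B) I would argue by induction on $n$. For the base case $n=1$, we have $|E(G)|\le 0$, so $G$ is loopless, and Lemma~\ref{lem:noloops-all-loops-nssp}(A) applies.

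For $n\ge 2$, I first note that $G$ is strongly connected only if every tree edge carries both directions. That already uses $2(n-1)=2n-2$ arcs, so the bound forces $G$ to have no loops, and again Lemma~\ref{lem:noloops-all-loops-nssp}(A) gives the claim.

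Otherwise some tree edge carries exactly one arc, say $(u,v)$. Deleting this edge splits the tree into two subtrees with vertex sets $T_u\ni u$ and $T_v\ni v$, of sizes $n_u+n_v=n$. The induced subgraphs $G[T_u]$ and $G[T_v]$ again have trees as underlying simple graphs. Counting arcs gives
$$|E(G[T_u])|+|E(G[T_v])|+1=|E(G)|\le 2n-2 .$$
So it is impossible that both $|E(G[T_u])|\ge 2n_u-1$ and $|E(G[T_v])|\ge 2n_v-1$, since that would give $|E(G)|\ge 2n-1$. Therefore one side, say $T_w$, satisfies $|E(G[T_w])|\le 2|T_w|-2$. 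By the induction hypothesis $G[T_w]$ does not allow the nSSP (note $|T_w|<n$).

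It remains to feed this into Corollary~\ref{cor:reducible-not-allow-nSSP} with $H=G[T_w]$ and $G_1$ the other side. The only arc between the two sides is $(u,v)$, which leaves $T_u$ and enters $T_v$. I would split into two cases.
\begin{itemize}
\item If $T_w=T_u$, take $V(G_1)=T_v$. Every out-neighbour of a vertex of $T_v$ lies in $T_v$, so the condition $N_G^+[x]\subseteq V(G_1)$ holds.
\item If $T_w=T_v$, take $V(G_1)=T_u$. Every in-neighbour of a vertex of $T_u$ lies in $T_u$, so the condition $N_G^-[x]\subseteq V(G_1)$ holds.
\end{itemize}
In either case the corollary yields that $G$ does not allow the nSSP.

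The main point needing care is this orientation bookkeeping, together with checking that the corollary applies when the bad piece $H$ may sit on either side; the arc-count pigeonhole step itself is straightforward.
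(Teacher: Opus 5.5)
Your proposal is correct and follows the paper's proof essentially step for step. In (A) the paper uses the same witness $X=a_{11}I-A^\top$, which differs from yours only by sign. In (B) it runs the same induction: it cuts along a one-directional tree arc, uses the arc-count pigeonhole to find a side with at most $2m-2$ arcs, and concludes via Corollary~\ref{cor:reducible-not-allow-nSSP}, where your explicit orientation case split spells out what the paper leaves to ``without loss of generality''.
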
 

\begin{proof} 
\begin{enumerate}\renewcommand{\theenumi}{\Alph{enumi}}
\item Let
$A=(a_{ij})\in{\mathcal{M}}(G)$  and without loss of generality assume $a_{11}\ne 0$.
Let $X=(x_{ij})$ defined by
$X:=a_{11}I-A\trans\in\bR^{n\times n}$.
Note the $x_{11}=0$ and $x_{ii}=a_{11}\ne 0$ for $i\geq 2$.
Since there are no double arcs in $G$, it follows that $A\circ X=O$, and matrix $X\trans=a_{11}I-A$  commutes with $A$. 

\item If the underlying simple graph of $G$ is a tree, then $G$ is weakly connected.
We will prove this part using induction on the number of vertices. In the case $n=1$, $G$ is a vertex without a loop and does not allow the nSSP by Lemma~\ref{lem:noloops-all-loops-nssp}. 

If $n\geq 2$ and $\mathcal{L}(G)=\emptyset$, then the statement follows by Lemma~\ref{lem:noloops-all-loops-nssp}, so suppose now $n\geq 2$ and $\mathcal{L}(G)\neq \emptyset$. 

Let us assume now that the theorem is true for all digraphs with less than $n$ vertices satisfying the conditions, and let $G$ be a digraph on $n$ vertices  such that $|E(G)|\leq 2n-2$, and its underlying simple graph is a tree. 
Because $|E(G)|\leq 2n-2$ there is at least one arc $e=(u,v)\in E(G)$, such that $(v,u) \notin E(G)$. 
Note that $e$ is a strong bridge in $G$ since its underlying simple graph contains no cycles, and let us denote by $G'$ and $G''$ the components of the digraph $G\setminus e$ after deletion of arc $e$.  Let $|V(G')|=m$ and $|V(G'')|=n-m$. If $|E(G')|\geq 2m-1$ and $|E(G'')|\geq 2(n-m)-1$, then $|E(G)|=|E(G')|+|E(G'')|+1\geq 2n-1$, which contradicts our assumption on $|E(G)|\leq 2n-2$. Therefore assume without loss of generality that $|E(G')|\leq 2m-2$ and by inductive hypothesis $G'$ does not allow the nSSP. By Corollary~\ref{cor:reducible-not-allow-nSSP} $G$ does not allow the nSSP.\qedhere
\end{enumerate}

\end{proof}

Theorems~\ref{thm:does not require} and~\ref{thm:does not allow}, together with Corollary~\ref{cor:reducible-not-allow-nSSP} provide examples of graphs that give confirmative answer to Problem~\ref{problem:minimum-number-arcs-not-require}.    

 \begin{remark}\label{rem:small-graphs}
   In order to verify that the answer to Problem~\ref{problem:minimum-number-arcs-not-require} is affirmative for a chosen $n$, based on results in Theorems~\ref{thm:does not require} and~\ref{thm:does not allow} and Corollary~\ref{cor:reducible-not-allow-nSSP} it is enough to consider strongly connected directed graphs $G$ that have the following properties:
  
   \begin{enumerate}\renewcommand{\theenumi}{\Alph{enumi}}
       \item\label{item:arcs} $|E(G)|\leq 2n-2$,
       \item $G$ is strongly connected and \item\label{item:loop-edge} $G$ has a loop and a double arc.
   \end{enumerate}

  For $n\leq 3$ there are no connected directed graphs with properties (\ref{item:arcs})-(\ref{item:loop-edge}), and hence all of them give the positive answer to Problem~\ref{problem:minimum-number-arcs-not-allow}. 
  
  For $n=4$ there are only eight non-isomorphic directed graphs with properties (\ref{item:arcs})-(\ref{item:loop-edge}), shown on Figure~\ref{fig:4-vertices}. It is straightforward to show that all eight adjacency matrices corresponding to these digraphs do not have the nSSP. Therefore,  Problem~\ref{problem:minimum-number-arcs-not-allow} has a positive answer for $n=4$ as well.
  
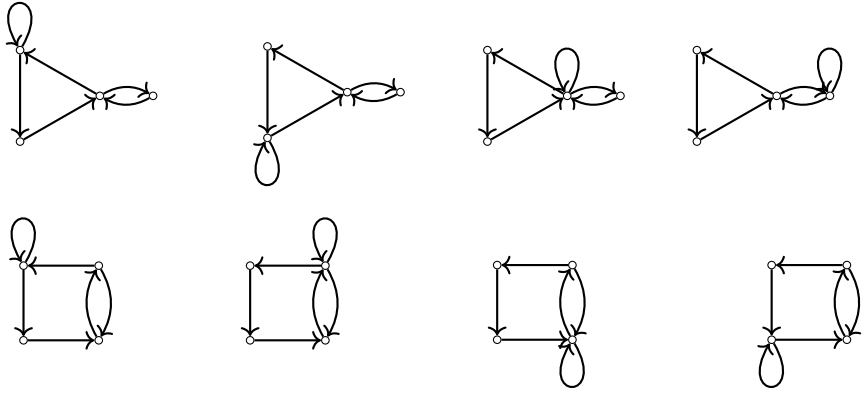
\begin{figure}[h]
\begin{tikzpicture}[scale=0.7]
\foreach \i in {1,...,3} {
    \node[fill=white] (\i) at (\i*120:1) {};
    
};
 \node[fill=white] (4) at (2,0) {};
\phantom{\node[fill=white] at (0,-1.9) {};}

\tikzset{every loop/.style={min distance=4mm,in=120,out=60,looseness=40}}
\draw[thick,->] (1) to[loop above] (1);

\draw[->, thick] (1) -- (2);
\draw[->, thick] (2) -- (3);
\draw[->, thick] (3) -- (1);
\draw[->, thick] (3) to[bend left] (4);
\draw[->, thick] (4) to[bend left] (3);

\end{tikzpicture}
\qquad
\begin{tikzpicture}[scale=0.7]
\foreach \i in {1,...,3} {
    \node[fill=white] (\i) at (\i*120:1) {};
    
};
 \node[fill=white] (4) at (2,0) {};

\tikzset{every loop/.style={min distance=4mm,in=240,out=300,looseness=40}}
\draw[thick,->] (2) to[loop above] (2);

\draw[->, thick] (1) -- (2);
\draw[->, thick] (2) -- (3);
\draw[->, thick] (3) -- (1);
\draw[->, thick] (3) to[bend left] (4);
\draw[->, thick] (4) to[bend left] (3);

\end{tikzpicture}
\qquad
\begin{tikzpicture}[scale=0.7]
\foreach \i in {1,...,3} {
    \node[fill=white] (\i) at (\i*120:1) {};
    
};
 \node[fill=white] (4) at (2,0) {};
\phantom{\node[fill=white] at (0,-1.9) {};}
\tikzset{every loop/.style={min distance=4mm,in=120,out=60,looseness=40}}
\draw[thick,->] (3) to[loop above] (3);

\draw[->, thick] (1) -- (2);
\draw[->, thick] (2) -- (3);
\draw[->, thick] (3) -- (1);
\draw[->, thick] (3) to[bend left] (4);
\draw[->, thick] (4) to[bend left] (3);

\end{tikzpicture}\qquad
\begin{tikzpicture}[scale=0.7]
\foreach \i in {1,...,3} {
    \node[fill=white] (\i) at (\i*120:1) {};
    
};
 \node[fill=white] (4) at (2,0) {};
\phantom{\node[fill=white] at (0,-1.9) {};}
\tikzset{every loop/.style={min distance=4mm,in=120,out=60,looseness=40}}
\draw[thick,->] (4) to[loop above] (4);

\draw[->, thick] (1) -- (2);
\draw[->, thick] (2) -- (3);
\draw[->, thick] (3) -- (1);
\draw[->, thick] (3) to[bend left] (4);
\draw[->, thick] (4) to[bend left] (3);

\end{tikzpicture}

\begin{tikzpicture}[scale=0.7]
\foreach \i in {1,...,4} {
    \node[fill=white] (\i) at (\i*90+45:1) {};
    
};
\phantom{\node[fill=white] at (0,-1.8) {};}
\tikzset{every loop/.style={min distance=4mm,in=120,out=60,looseness=40}}
\draw[thick,->] (1) to[loop above] (1);

\draw[->, thick] (1) -- (2);
\draw[->, thick] (2) -- (3);
\draw[->, thick] (3) to[bend left] (4);
\draw[->, thick] (4) to[bend left] (3);
\draw[->, thick] (4) -- (1);
\end{tikzpicture}\qquad\qquad
\begin{tikzpicture}[scale=0.7]
\foreach \i in {1,...,4} {
    \node[fill=white] (\i) at (\i*90+45:1) {};
    
};

\tikzset{every loop/.style={min distance=4mm,in=120,out=60,looseness=40}}
\draw[thick,->] (4) to[loop above] (4);

 \phantom{\node[fill=white] at (0,-1.8) {};}
\draw[->, thick] (1) -- (2);
\draw[->, thick] (2) -- (3);
\draw[->, thick] (3) to[bend left] (4);
\draw[->, thick] (4) to[bend left] (3);
\draw[->, thick] (4) -- (1);
\end{tikzpicture}\qquad\qquad
\begin{tikzpicture}[scale=0.7]
\foreach \i in {1,...,4} {
    \node[fill=white] (\i) at (\i*90+45:1) {};
    
};

\tikzset{every loop/.style={min distance=4mm,in=240,out=300,looseness=40}}
\draw[thick,->] (3) to[loop above] (3);

\draw[->, thick] (1) -- (2);
\draw[->, thick] (2) -- (3);
\draw[->, thick] (3) to[bend left] (4);
\draw[->, thick] (4) to[bend left] (3);
\draw[->, thick] (4) -- (1);
\end{tikzpicture}\qquad\qquad
\begin{tikzpicture}[scale=0.7]
\foreach \i in {1,...,4} {
    \node[fill=white] (\i) at (\i*90+45:1) {};
    
};

\tikzset{every loop/.style={min distance=4mm,in=240,out=300,looseness=40}}
\draw[thick,->] (2) to[loop above] (2);

\draw[->, thick] (1) -- (2);
\draw[->, thick] (2) -- (3);
\draw[->, thick] (3) to[bend left] (4);
\draw[->, thick] (4) to[bend left] (3);
\draw[->, thick] (4) -- (1);
\end{tikzpicture}
\caption{All graphs on four vertices satisfying (\ref{item:arcs})-(\ref{item:loop-edge}).}\label{fig:4-vertices}
\end{figure}

Using a Wolfram Mathematica code we verified that Problem~\ref{problem:minimum-number-arcs-not-require} has a positive answer for $n=5$ as well, and so all connected directed graphs on at most $n\leq 5$ vertices and at most $2n-2$ arcs do not require the nSSP.
 \end{remark}

These results together with the Example \ref{ex:2n-2-arcs-notnSSP} motivate the following two questions.

\begin{problem}\label{problem:minimum-number-arcs-not-allow}
Is the minimum number of arcs in a strongly connected  directed graph $G$ on $n$ vertices that requires the nSSP equal to $2n-1$? Is the minimum number of arcs in a strongly connected directed graph $G$ on $n$ vertices that allows the nSSP equal to $2n-1$?  
\end{problem}

\begin{remark}
   \begin{enumerate}
   \item  In Theorem~\ref{thm:double_paths-first-m-loops} we will provide an example (for $m=1$) of a digraph whose underlying simple graph is a tree, with $2n-1$ arcs, which requires the nSSP. This guarantees that the boundary established in (\ref{numberarcs}) of Theorem~\ref{thm:does not allow} is sharp for digraphs whose underlying simple graph is a tree.
    \item   Let $C_{n,\loops}$ be a directed cycle with 
    $$E(C_{n,\loops})=\{(i,i+1) \!\! \mod n \colon i\in[n]\}\cup \loops.$$ 
    In \cite[Theorem 3.3]{2025-Saha-Tilis-VanderMeulen-VanTuyl-nSMP} authors proved that for all $n$ and all $\loops\subseteq [n]$ the directed graph $C_{n,\loops}$ requires the nSMP.  However, Theorem~\ref{thm:does not require}(\ref{2:not-require-no-double-arcs}) implies that $C_{n,\loops}$ does not require the nSSP. Moreover, if $|\loops|=1$, it does not allow the nSSP by Theorem~\ref{thm:does not allow}, but if $\loops=[n]$, it allows the nSSP.
    Hence $C_{n,\loops}$ is another family of directed graphs for which the nSSP and the nSMP behave differently.
    \end{enumerate}
\end{remark}

\section{Combinatorial property to ensure the nSSP}

In this section, we establish a combinatorial rule for directed graphs by extending the framework developed for undirected graphs in~\cite[Lemma~3.1]{20-Lin-SSPgraph}.

If $G$ is a directed graph and we want to show it requires the nSSP,  one has to start with an arbitrary matrix $A \in \calM(G)$ and $X\in \overline{\calM}(G^c)$ and show that the system $[A,X\trans]=O$ has only trivial solution. Equivalenty, we have to show that $X\in\overline{{\calM}}((K_n^{\circ})^c)$, where $K_n^{\circ}$ is the all looped complete digraph on $n=|V(G)|$ vertices.

Suppose that we know that $X\in \overline{\calM}(G_{\ell}^c)$ for some supergraph $G_{\ell}$ of $G$ on the same vertex set. Then the $i,j$-th equation of the $[A,X\trans]=O$ is equal to
\begin{align}0=[A,X\trans]_{i,j}&=\sum_{k\in[n]} A_{i,k}X_{k,j}\trans-\sum_{m\in[n]} X_{i,m}\trans A_{m,j}=\notag\\ 
&=\sum_{k\in[n]} A_{i,k}X_{j,k}-\sum_{m\in[n]} A_{m,j}X_{m,i}=\notag\\ 
&=\sum_{k \in N^+_G[i] \cap N^+_{G_{\ell}}[j]^c} A_{i,k}X_{j,k}-\sum_{m \in N^-_{G}[j]\cap N^-_{G_{\ell}}[i]^c} A_{m,j}X_{m,i}. \label{eq:ij-of-commutator}
\end{align}

In a special case, when expression~\eqref{eq:ij-of-commutator} is a monomial, i.e., consists only of one term, the assumption on $A_{x,y}\ne 0$ will imply that one entry of $X$ has to be $0$. This reduces the number of variables in the linear system $[A,X\trans]=O$ by one and sometimes a sequence of such observations will imply that $X=0$. This implies that the digraph $G$ requires the nSSP.

The next lemma provides conditions under which monomial equations show up, and so guarantees that some entries in matrix $X$ are equal to zero.

 \begin{lemma}\label{lem:rule1}
 Let $G$ be a digraph and let $G_{\ell}$ be a supergraph of $G$ on the same vertex set. Let $A \in \calM(G)$ and suppose that $X$ is such that $[A,X\trans]=O$, and $A_{\ell} \circ X=O$ for all matrices $A_{\ell} \in \calM(G_{\ell})$. 
 \begin{enumerate}[(A)]
     \item  \label{lem:rule1out}If for some  $i,j,k \in V(G)$ the conditions 
\begin{equation}\label{eq:monomial-k} N^+_G[i] \cap N^+_{G_{\ell}}[j]^c=\{k\} \text{ and } N^-_{G}[j]\cap N^-_{G_{\ell}}[i]^c=\emptyset
\end{equation}
hold, then $X\circ A_{\ell+1}=O$ for all $A_{\ell+1}\in \calM(G_{\ell+1})$, where $G_{\ell+1} = G_{\ell} + (j,k)$.  
\item \label{rule1in} If for some  $i,j,m \in V(G)$ the conditions 
\begin{equation}\label{eq:monomial-m}
 N^+_G[i] \cap N^+_{G_{\ell}}[j]^c=\emptyset  \text{ and } N^-_{G}[j]\cap N^-_{G_{\ell}}[i]^c=\{m\} 
\end{equation}
hold, then $X\circ A_{\ell+1}=O$ for all $A_{\ell+1}\in \calM(G_{\ell+1})$, where $G_{\ell+1} = G_{\ell} + (m,i)$. 
    \end{enumerate}
\end{lemma}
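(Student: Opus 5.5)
The proof reads off the $(i,j)$ entry of the commutator using the expansion~\eqref{eq:ij-of-commutator}. First I make the hypothesis on $X$ concrete. The condition $A_{\ell}\circ X=O$ for \emph{all} $A_\ell\in\calM(G_\ell)$ means exactly that $X_{p,q}=0$ whenever $(p,q)\in E(G_\ell)$, since every arc of $G_\ell$ carries a nonzero entry in some (indeed every) $A_\ell$. Equivalently, $X\in\overline{\calM}(G_\ell^c)$. Likewise, the conclusion for $G_{\ell+1}=G_\ell+(j,k)$ is equivalent to showing in addition that $X_{j,k}=0$.

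Next I justify the restriction of the index sets in~\eqref{eq:ij-of-commutator}. In the first sum, $A_{i,k}\neq 0$ only when $k\in N^+_G[i]$, and $X_{j,k}$ can be nonzero only when $(j,k)\notin E(G_\ell)$, that is, when $k\in N^+_{G_\ell}[j]^c$. In the second sum, $A_{m,j}\ne0$ only when $m\in N^-_G[j]$, and $X_{m,i}$ can be nonzero only when $(m,i)\notin E(G_\ell)$, that is, when $m\in N^-_{G_\ell}[i]^c$. Hence $0=[A,X\trans]_{i,j}$ reduces to the stated restricted sums. These use the identities $(X\trans)_{k,j}=X_{j,k}$ and $(X\trans)_{i,m}=X_{m,i}$, which I will spell out.

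For part~(A), condition~\eqref{eq:monomial-k} makes the second sum empty and leaves a single term in the first, so the equation becomes $A_{i,k}X_{j,k}=0$. Since $k\in N^+_G[i]$, we have $(i,k)\in E(G)$, and as $A\in\calM(G)$ this gives $A_{i,k}\ne0$. Therefore $X_{j,k}=0$. Combined with $X\in\overline{\calM}(G_\ell^c)$, this yields $X\in\overline{\calM}(G_{\ell+1}^c)$, which is the claim. If $(j,k)$ were already an arc of $G_\ell$, then $k\notin N^+_{G_\ell}[j]^c$, so this case cannot occur; hence $G_{\ell+1}$ genuinely adds a new arc.

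Part~(B) is symmetric. The first sum vanishes and the equation becomes $-A_{m,j}X_{m,i}=0$, where $A_{m,j}\neq0$ because $m\in N^-_G[j]$. Hence $X_{m,i}=0$, which accounts for the added arc $(m,i)$.

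There is no real obstacle here. The only care needed is the index bookkeeping in the transpose: making sure the surviving variable is $X_{j,k}$ in~(A) and $X_{m,i}$ in~(B), so that the arcs added are $(j,k)$ and $(m,i)$, matching the orientations of the neighbourhoods in~\eqref{eq:monomial-k} and~\eqref{eq:monomial-m}.
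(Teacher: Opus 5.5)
Your proposal is correct and is the paper's proof. You substitute the conditions~\eqref{eq:monomial-k} (resp.~\eqref{eq:monomial-m}) into the expansion~\eqref{eq:ij-of-commutator} of $[A,X\trans]_{i,j}$, which leaves the monomial $A_{i,k}X_{j,k}=0$ (resp.\ $-A_{m,j}X_{m,i}=0$), and then use $A_{i,k}\neq 0$ (resp.\ $A_{m,j}\neq 0$); your additional bookkeeping (rewriting the hypothesis as $X\in\overline{\calM}(G_\ell^c)$ and justifying the index restriction in the expansion) only makes explicit what the paper leaves implicit.
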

\begin{proof}
After putting~\eqref{eq:monomial-k} in~\eqref{eq:ij-of-commutator}, we observe that $[A,X\trans]_{i,j}=0$ simplifies to the monomial equation $$0=[A,X\trans]_{i,j}=A_{i,k}X_{j,k}$$
for $k\in N^+_G[i]$. Since $A_{i,k}\neq 0$ and so $X_{j,k}=0$. The statement follows.
Similarly~\eqref{eq:monomial-m} implies $$0=[A,X\trans]_{i,j}=-A_{m,j}X_{m,i}$$
for $m\in N^-_G[j]$. Since $A_{m,j}\neq 0$, we have $X_{m,i}=0$, and so the statement follows.
\end{proof}

 \begin{remark}\label{rem:symmetric-pattern} 
     Note that if $N_G^+[x]=N_G^-[x]$ for all $x\in V(G)$ (i.e, matrix $A\in \calM(G)$ has a symmetric pattern) and $N_{G_\ell}^+[x]=N_{G_\ell}^-[x]$ for all $x\in V(G)$ (i.e, matrix $X\in \overline{\calM}(G_{\ell}^c)$ has a symmetric pattern), then a triple $(i,j,k)$ satisfies condition~\eqref{eq:monomial-k} if and only if $(j,i,k)$ satisfies condition~\eqref{eq:monomial-m}. Therefore each condition~\eqref{eq:monomial-k} or~\eqref{eq:monomial-m} implies $X\circ A_{\ell+1}=O$ for all $A_{\ell+1}\in \calM(G_{\ell+1})$, where $G_{\ell+1} = G_{\ell} + (j,k)+ (k,j)= G_{\ell} + \{\{j,k\}\}$. 
 \end{remark}

Assume now that Lemma~\ref{lem:rule1} can be used iteratively in some directed graph $G$. Namely, assume that for $A \in \mathcal{M}(G)$ and $X$ such that $[A,X^T]=O$ and  $A\circ X=O$  next implications hold:
$$X \in \overline {\mathcal{M}}(G_1^c) \implies X \in \overline {\mathcal{M}}(G_2^c) \implies \cdots \implies X \in \overline {\mathcal{M}}(G_{k-1}^c)\implies X \in\overline {\mathcal{M}}((K_n^\circ)^c)$$ for some directed graphs $G_1\ldots, G_{k-1}$ where each $G_{i+1}$ is constructed from $G_i$ by adding some arcs using Lemma~\ref{lem:rule1}, and $K_n^{\circ}$ is the complete digraph with all loops. From the fact $X \in\overline {\mathcal{M}}((K_n^\circ)^c)$ it follows $X=O$, so $A$ has the nSSP. Thus we have proved the following Theorem which shows a combinatorial way of showing that a directed graph requires the nSSP.

\begin{theorem}\label{thm:rule1-nSSP}
  If there is a sequence of digraphs $G \subset G_1 \subset \ldots \subset G_k=K_n^{\circ}$ where $K_n^{\circ}$ is the complete digraph with all loops and each $G_{i+1}$ is constructed from $G_i$ by adding some arcs by Lemma~\ref{lem:rule1}, then $G$ requires the nSSP.
\end{theorem}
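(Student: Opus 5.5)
The plan is an induction along the given chain $G\subset G_1\subset\cdots\subset G_k=K_n^{\circ}$, with the invariant
\[
A_{\ell}\circ X=O \quad\text{for all } A_{\ell}\in\calM(G_{\ell}),
\]
equivalently $X\in\overline{\calM}(G_\ell^c)$.

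Fix an arbitrary $A\in\calM(G)$ and a matrix $X$ with $A\circ X=[A,X\trans]=O$. The goal is $X=O$; since $A$ is arbitrary, this shows that $G$ requires the nSSP.

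For the base case, I would note that $G_0:=G$ satisfies the invariant. Indeed, $A\circ X=O$ with $A\in\calM(G)$ gives $X_{ij}=0$ for every $(i,j)\in E(G)$, as observed at the end of Subsection~\ref{sec:notation}. The invariant depends only on the zero pattern of $X$, so it holds for every $A_0\in\calM(G)$, not just the fixed $A$.

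For the inductive step, suppose the invariant holds for $G_\ell$. The graph $G_{\ell+1}$ is obtained from $G_\ell$ by adding arcs $e_1,\dots,e_r$, each justified by Lemma~\ref{lem:rule1}. I would add them one at a time, through intermediate graphs $G_\ell=H_0\subset H_1\subset\cdots\subset H_r=G_{\ell+1}$. At each sub-step, Lemma~\ref{lem:rule1} applies with the fixed $A\in\calM(G)$ and the current supergraph $H_s$, and it gives that $X$ vanishes on the new arc, so the invariant holds for $H_{s+1}$.

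The main obstacle is making sure the hypotheses of Lemma~\ref{lem:rule1} are checked against the right graph. Conditions \eqref{eq:monomial-k} and \eqref{eq:monomial-m} involve $G$, through the neighbourhoods $N^{\pm}_G$, and the current supergraph. I would read the phrase "constructed by adding some arcs by Lemma~\ref{lem:rule1}" as: each added arc comes from a triple satisfying the condition with respect to the supergraph current at the moment it is added. This choice causes no harm. Enlarging the supergraph only shrinks the sets $N^+_{G_\ell}[j]^c$ and $N^-_{G_\ell}[i]^c$, so a condition verified for $G_\ell$ either remains a monomial or becomes empty. In the empty case the arc's entry is already known to be zero, or the equation is vacuous. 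So the derived zero entries remain valid, since they come straight from the equation $[A,X\trans]_{i,j}=0$, whose surviving terms are at most one monomial.

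Finally, at $G_k=K_n^{\circ}$ the invariant says $X$ vanishes on every ordered pair $(i,j)\in[n]^2$, loops included. Hence $X=O$, so $A$ has the nSSP, and therefore $G$ requires the nSSP.
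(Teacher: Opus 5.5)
Your proposal is correct and follows the same route as the paper. You fix $A\in\calM(G)$ and $X$ with $A\circ X=[A,X\trans]=O$, propagate $X\in\overline{\calM}(G_\ell^c)$ along the chain by Lemma~\ref{lem:rule1}, and conclude $X=O$ at $G_k=K_n^{\circ}$. Your explicit base case $G_0=G$ and your monotonicity remark about arcs added within one step spell out details that the paper's short chain of implications leaves implicit.
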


\begin{example}\label{ex:DH7}
  Let us show the strength of Lemma~\ref{lem:rule1} and Theorem~\ref{thm:rule1-nSSP} on the small digraph $G$ from Figure~\ref{fig:rule-1}. Let $A\in {\mathcal M}(G)$ and  $X=(x_{ij})\in \overline{\mathcal M}(G^c)$, such that $[A,X\trans]=O$.
  
\begin{figure}[h]
\begin{tikzpicture}[scale=0.7]
\foreach \i in {1,...,3} {
    \node[fill=white] (\i) at (90+120*\i:1) {};
   
};
\node[rectangle, draw=none] at (0.5,1) {$3$};
\node[rectangle, draw=none] at (1.5,-0.5) {$2$};
\node[rectangle, draw=none] at (-1.5,-0.5) {$1$};

\tikzset{every loop/.style={min distance=4mm,in=120,out=60,looseness=40}}
\foreach \i in {3} {
     \draw[thick,->] (\i) to[loop above] (\i);
};
 
\draw[->, thick] (1) to[bend left] (3);
\draw[->, thick] (3) to[bend left] (1);
\draw[->, thick] (2)--(1);
\draw[->, thick] (3)--(2);

\node[rectangle, draw=none] at (0,-1.5) {$G$};
\end{tikzpicture}\qquad
\begin{tikzpicture}[scale=0.7]
\foreach \i in {1,...,3} {
    \node[fill=white] (\i) at (90+120*\i:1) {};
 
};
\node[rectangle, draw=none] at (0.5,1) {$3$};
\node[rectangle, draw=none] at (1.5,-0.5) {$2$};
\node[rectangle, draw=none] at (-1.5,-0.5) {$1$};
\tikzset{every loop/.style={min distance=4mm,in=120,out=60,looseness=40}}
     \draw[thick,->] (3) to[loop above] (3);
\tikzset{every loop/.style={min distance=4mm,in=240,out=300,looseness=40}}
     \draw[thick,green,->] (1) to[loop above] (1);
 
\draw[->, thick] (1) to[bend left] (3);
\draw[->, thick] (3) to[bend left] (1);
\draw[->, thick] (2)--(1);
\draw[->, thick,green] (1) to[bend right] (2);
\draw[->, thick] (3)--(2);

\node[rectangle, draw=none] at (0,-1.5) {$G_1$};
\end{tikzpicture}
\qquad\begin{tikzpicture}[scale=0.7]
\foreach \i in {1,...,3} {
    \node[fill=white] (\i) at (90+120*\i:1) {};
};

\tikzset{every loop/.style={min distance=4mm,in=120,out=60,looseness=40}}
     \draw[thick,->] (3) to[loop above] (3);
\tikzset{every loop/.style={min distance=4mm,in=240,out=300,looseness=40}}
     \draw[thick,->] (1) to[loop above] (1);
 
\draw[->, thick] (1) to[bend left] (3);
\draw[->, thick] (3) to[bend left] (1);
\draw[->, thick] (2)--(1);
\draw[->, thick] (1) to[bend right] (2);
\draw[->, thick] (3)--(2);
\draw[->, thick,green] (2) to[bend right] (3);
\node[rectangle, draw=none] at (0.5,1) {$3$};
\node[rectangle, draw=none] at (1.5,-0.5) {$2$};
\node[rectangle, draw=none] at (-1.5,-0.5) {$1$};
\node[rectangle, draw=none] at (0,-1.5) {$G_2$};
\end{tikzpicture}
\qquad\begin{tikzpicture}[scale=0.7]
\foreach \i in {1,...,3} {
    \node[fill=white] (\i) at (90+120*\i:1) {};
  
};

\tikzset{every loop/.style={min distance=4mm,in=120,out=60,looseness=40}}
     \draw[thick,->] (3) to[loop above] (3);
\tikzset{every loop/.style={min distance=4mm,in=240,out=300,looseness=40}}
     \draw[thick,->] (1) to[loop above] (1);
      \draw[thick,->,green] (2) to[loop above] (2);
 \node[rectangle, draw=none] at (0.5,1) {$3$};
\node[rectangle, draw=none] at (1.5,-0.5) {$2$};
\node[rectangle, draw=none] at (-1.5,-0.5) {$1$};
\draw[->, thick] (1) to[bend left] (3);
\draw[->, thick] (3) to[bend left] (1);
\draw[->, thick] (2)--(1);
\draw[->, thick] (1) to[bend right] (2);
\draw[->, thick] (3)--(2);
\draw[->, thick] (2) to[bend right] (3);

\node[rectangle, draw=none] at (0,-1.5) {$G_3$};
\end{tikzpicture}
\caption{Sequence of $G\subset G_1 \subset G_2 \subset G_3=K_3^{\circ}$ which shows that $G$ requires the nSSP.}\label{fig:rule-1}
\end{figure}
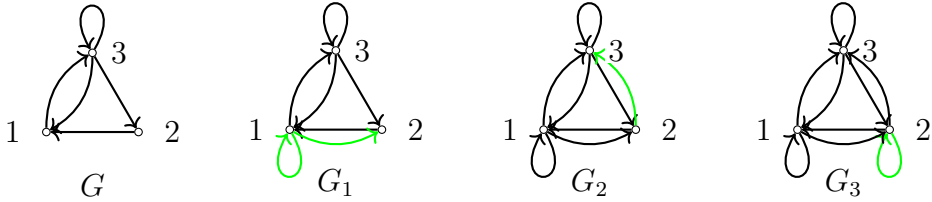

     Note that $N_G^+[i]\cap N_G^+[3]^c=\emptyset$ and $N_G^-[3]\cap N_G^-[i]^c=\{1\}$ for $i\in[2]$, and so by Lemma~\ref{lem:rule1}(\ref{rule1in}), it follows that $X\in \overline{{\mathcal M}}(G_1^c)$, where $G_1=G+\{(1,1),(1,2)\}$. Equivalently, $x_{11}=x_{12}=0$. Similarly
   $N_G^+[1]\cap N_{G_1}^+[2]^c=\{3\}$ and $N_G^-[2]\cap N_{G_1}^-[1]^c=\emptyset$, therefore $X\in \overline{{\mathcal M}}(G_2^c)$, where $G_2=G_1+\{(2,3)\}$ by the same Lemma. This implies  $x_{23}=0$. Finally, $N_G^+[3]\cap N_{G_2}^+[2]^c=\{2\}$ and $N_G^-[2]\cap N_{G_2}^-[3]^c=\emptyset$, therefore $X\in \overline{{\mathcal M}}(G_3^c)$, where $G_3=G_2+\{(2,2)\}$, by Lemma~\ref{lem:rule1}(\ref{lem:rule1out}). This implies $X\in \overline{\mathcal M}(G_3)$, and so by Theorem~\ref{thm:rule1-nSSP} $G$ requires the nSSP.
     The sequence of graphs $G\subseteq G_1 \subseteq G_2\subseteq G_3=K_3^{\circ}$ is shown on Figure~\ref{fig:rule-1}.
   \end{example}

\section{Graphs with pendent paths}

In this section, we apply Lemma~\ref{lem:rule1} to investigate directed graphs $G$ that contain pendent double paths. We demonstrate that any matrix $X \in \overline{\mathcal M}(G^c)$ commuting with an arbitrary matrix $A \in \mathcal M(G)$ must exhibit additional zero entries beyond those prescribed by its pattern. We present in Corollary~\ref{ex:lollipop} and Section~\ref{sec:double-paths} several examples of digraphs that require the nSSP and follow from the results of this section.

\subsection{Graphs with pendent looped double paths}
We  first show the following technical lemma, in which we considers graphs with pendent paths, in which all vertices on the path are looped.

\begin{lemma}\label{lem:induced-path-all-looped}
    Let $m,n\in \bN$, $m\leq n$, and let $G$ be a digraph with $V(G)=[n]$ such that its induced subgraph on vertices $[m]$ is $\dvec{P}_{m,[m]}$ such that $N_{G}[i]\subseteq [m]$ for all $i\in[m-1]$.

If $A\in \mathcal{M}(G)$ and $X=(x_{ij})\in \overline{\mathcal{M}}(G^c)$ such that $[A,X\trans]=O$, then $x_{ij}=0$ for all $1\leq i, j \leq m$.
    
    Moreover, if $N^+_G[m]=N^-_G[m]=\{m-1,m,m+1\}$ and $m+1\notin {\mathcal L}(G)$, then $x_{ij}=0$ for all $i\ne j$, $1\leq i, j \leq m+1$.
\end{lemma}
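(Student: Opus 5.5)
The plan is to apply Lemma~\ref{lem:rule1} repeatedly, exactly in the spirit of Theorem~\ref{thm:rule1-nSSP}, by inducting on the distance $d=|i-j|$ along the path. Fix $A\in\calM(G)$ and $X\in\overline{\calM}(G^c)$ with $[A,X\trans]=O$. Write out the $(i,j)$-entry of the commutator as in \eqref{eq:ij-of-commutator}:
\[
0=\sum_{k\in N^+_G[i]}A_{i,k}x_{j,k}-\sum_{p\in N^-_G[j]}A_{p,j}x_{p,i}.
\]
For every $i\in[m-1]$ we have $N^+_G[i]=N^-_G[i]=\{i-1,i,i+1\}\cap[m]$, because the path is looped and $N_G[i]\subseteq[m]$. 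The base of the induction is $d\le 1$. Here $x_{ij}=0$ holds automatically for $i,j\in[m]$, since $(i,i)$ and $(i,i\pm1)$ are arcs of $G$ and $X\in\overline{\calM}(G^c)$.

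For the inductive step, assume $x_{ab}=0$ for all $a,b\in[m]$ with $|a-b|\le d$, where $d\ge1$, and show this also holds for $|a-b|=d+1$. I would treat the entries below the diagonal by a chain moving along the path, and handle the entries above the diagonal symmetrically.

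1. Take the equation at position $(1,d+1)$, where $d+1\le m-1$. The first sum involves $x_{d+1,1}$ and $x_{d+1,2}$, and both lie at distance at most $d$. The second sum involves $x_{d,1}$, $x_{d+1,1}$ and $x_{d+2,1}$. Only $x_{d+2,1}$ lies at distance $d+1$. So the equation is a monomial $A_{d+2,d+1}x_{d+2,1}=0$, and this is precisely condition \eqref{eq:monomial-m} of Lemma~\ref{lem:rule1}. Hence $x_{d+2,1}=0$.

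2. Next take the equation at $(i,i+d)$ for $i=2,3,\dots$, keeping $i+d\le m-1$. The only entries at distance $d+1$ that appear are $x_{i+d,i-1}$ and $x_{i+d+1,i}$. The first of these was killed at the previous step of the chain. The equation therefore again reduces to a monomial, and it forces $x_{i+d+1,i}=0$.

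3. Running the transposed chain, using equations $(d+1,1),(i+d,i),\dots$ and part~(\ref{lem:rule1out}) of Lemma~\ref{lem:rule1}, kills $x_{i,i+d+1}$.

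Every equation used has both indices in $[m-1]$, so all the neighbourhoods involved lie inside $[m]$. The behaviour of vertex $m$ outside $[m]$ never enters. This completes the induction and proves the first claim.

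For the second part, assume additionally that $N^\pm_G[m]=\{m-1,m,m+1\}$ and $m+1\notin\loops(G)$. Let $i\le m-1$; the arc $(m,m+1)$ already gives $x_{m,m+1}=x_{m+1,m}=0$.

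1. Consider the equation at $(i,m)$. The first sum involves only $x_{m,k}$ with $k\in[m]$, and these are all zero by the first part. The second sum runs over $p\in\{m-1,m,m+1\}$, and only $x_{m+1,i}$ remains unknown. So $A_{m+1,m}x_{m+1,i}=0$, which gives $x_{m+1,i}=0$.

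2. Symmetrically, the equation at $(m,i)$ contains the first-sum term $A_{m,m+1}x_{i,m+1}$. The remaining first-sum terms are $x_{i,k}$ with $k\in[m]$, which are zero. The second sum involves $x_{p,m}$ with $p\in N^-[i]\subseteq[m]$, which are also zero. Hence $x_{i,m+1}=0$.

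The diagonal entry $x_{m+1,m+1}$ is not claimed, which is consistent with $m+1$ being unlooped. I expect the main obstacle to be choosing the order of the equations so that each step is genuinely a monomial. A generic equation $(i,j)$ contains two unknowns at distance $d+1$, and only starting the chain at the endpoint $i=1$, where vertex $1$ has no left neighbour $i-1$, breaks this tie.
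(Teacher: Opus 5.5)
Your proof is correct and follows essentially the paper's route. It inducts on the distance $|i-j|$ using the chain of monomial equations $[A,X\trans]_{1,d+1}, [A,X\trans]_{i,i+d},\dots$ started at the endpoint vertex $1$. The moreover part uses the same equations $[A,X\trans]_{i,m}$ (and their transposes), which the paper obtains by extending its chains to $i=m-k$. Your explicit transposed chain is also slightly more careful than the paper's appeal to Remark~\ref{rem:symmetric-pattern}, whose hypothesis of symmetric neighbourhoods at every vertex need not hold for $G$.
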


\begin{proof}
  Let $A=\begin{pmatrix}
       a_{i,j}
   \end{pmatrix}\in \mathcal{M}(G)$ and  $X=(x_{i,j})\in \overline{\mathcal{M}}(G^c)$, and let us define $a_{k,\ell}=x_{k,\ell}=0$ if $k\leq 0$ or $\ell \leq 0$. Observe first that 
   $N_G^+[i]=N_G^-[i]$ and $N_G^+[i]=\{i-1, i, i+1\}$ for all $i\in [m-1]$.
   Therefore 
   \begin{align}\label{eq:ij-loops}
       0=[A,X\trans]_{i,j}=a_{i,i-1}x_{j,i-1}+a_{i,i}x_{j,i}+a_{i,i+1}x_{j,i+1} -a_{j+1,j}x_{j+1,i}-a_{j,j}x_{j,i}-a_{j-1,j}x_{j-1,i}.
   \end{align}
  
   In particular, when $j=i+1$, it follows that
   $0=[A,X\trans]_{1,2}=-a_{3,2}x_{3,1}$ and 
   \begin{equation}\label{eq:dist2}
       0=[A,X\trans]_{i,i+1}=a_{i,i-1}x_{i+1,i-1}-a_{i+2,i+1}x_{i+2,i}
   \end{equation}
   for all $2 \leq i \leq m-2$. Since $a_{j+1,j}\ne 0$ for $j\in [m-2]$, it follows inductively that for all $i\in [m-2]$ we have $x_{i+2,i}=0$. By Remark~\ref{rem:symmetric-pattern}, $x_{i,i+2}=0$  as well. 
   
   Therefore $X\in \overline{\mathcal{M}}(G_2^c)$ for $G_2=G+\{\{i,i+2\}\colon i\in [m-2]\}$. For all $k$, $2\leq k\leq m-1$ let us define $G_k=G+\{\{i,i+j\}\colon j\in [k], i\in [m-j]\}$ and $G_1=G$. We will show that if $X\in \overline{\mathcal{M}}(G_{k}^c)$, then $X\in \overline{\mathcal{M}}(G_{k+1}^c)$ for all $k\in [m-2]$. 

   We proved the statement is true for $k=1$. Let $X\in \overline{\mathcal{M}}(G_{k}^c)$, i.e.~$x_{i,i+j}=0$ for all $i\in [m-k]$ and $j\in [k]$. Therefore $$0=[A,X\trans]_{1,k+1}=-a_{k+2,k+1}x_{k+2,1}$$ and so $x_{k+2,1}=0$
   and for all $2\leq i \leq m-k-1$, equation~\eqref{eq:ij-loops} simplifies to
   \begin{align}\label{eq:dist-k}
       0&=[A,X\trans]_{i,i+k}=\notag\\
       &=a_{i,i-1}x_{i+k,i-1}+a_{i,i}x_{i+k,i}+a_{i,i+1}x_{i+k,i+1} \notag\\
       &\qquad \qquad-a_{i+k+1,i+k}x_{i+k+1,i}-a_{i+k,i+k}x_{i+k,i}-a_{i+k-1,i+k}x_{i+k-1,i}=\notag\\
       &=a_{i,i-1}x_{i+k,i-1}-a_{i+k+1,i+k}x_{i+k+1,i}
   \end{align}
 Since $x_{k+2,1}=0$, equations~\eqref{eq:dist-k} imply that for all $i\in [m-k-1]$ we have $x_{i+k+1,i}=0$. By Remark~\ref{rem:symmetric-pattern} it follows that $x_{i,i+k+1}=0$, and so $X\in \overline{\mathcal{M}}(G_{k+1}^c)$.
This inductively implies that $X\in \overline{\mathcal{M}}(G_{m-1}^c)$  and so $x_{i,j}=0$ for all $i,j \in [m]$.

  For the moreover case observe that~\eqref{eq:dist2} and~\eqref{eq:dist-k} both extend to $i=m-1$ and $i=m-k$, respectively, which completes the proof.
\end{proof}

The next corollary shows an example of the use of Lemma~\ref{lem:induced-path-all-looped} to show that the family of all double lollipop digraphs with a specific assignment of loops requires the nSSP.

\begin{corollary}\label{ex:lollipop}
Let $G=\dvec{L}_{p,k,\loops}$ be a double lollipop digraph on $n=p+k$ vertices, where $V(G)=[n]$ and  
$$E(G)=\{\{i,i+1\}\colon i\in [p]\}\cup \{\{i,j\}\colon  p+1\leq i< j \leq n\}\cup \{(\ell,\ell)\colon \ell\in\loops\}.$$

If $[p+2]\subseteq \loops$, then $\dvec{L}_{p,k,\loops}$ requires the nSSP.
\end{corollary}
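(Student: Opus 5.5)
The plan is to combine Lemma~\ref{lem:induced-path-all-looped}, applied to the looped path $1,\dots,p+1$, with a few monomial equations of the type in Lemma~\ref{lem:rule1}. These will clear the remaining unknowns between the clique and the path. Fix $A\in\calM(G)$ and $X\in\overline{\calM}(G^c)$ with $[A,X\trans]=O$, and write $C=\{p+1,\dots,n\}$ for the clique. Both the pattern of $A$ and the complement pattern are symmetric, so by Remark~\ref{rem:symmetric-pattern} every zero $x_{uv}$ we obtain also gives $x_{vu}=0$. We use the formula
\[
[A,X\trans]_{a,b}=\sum_{k\in N^+_G[a]}a_{a,k}x_{b,k}-\sum_{m\in N^-_G[b]}a_{m,b}x_{m,a}.
\]

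\emph{Step 1.} Take $m=p+1$ in Lemma~\ref{lem:induced-path-all-looped}. The induced subgraph on $[p+1]$ is $\dvec{P}_{p+1,[p+1]}$, since $[p+1]\subseteq\loops$. Moreover $N_G[i]\subseteq[p+1]$ for $i\le p$. The lemma therefore gives $x_{ij}=0$ for all $i,j\in[p+1]$. Since $X\circ A=O$, we also have $x_{uv}=0$ for all distinct $u,v\in C$ and $x_{p+2,p+2}=0$. The only possibly nonzero entries left are $x_{jj}$ for unlooped $j\in C$, and $x_{j,i},x_{i,j}$ for $i\in[p]$ and $j\in C\setminus\{p+1\}$.

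\emph{Step 2: the clique diagonal.} For $j\in C\setminus\{p+2\}$, consider the $(p+2,j)$ entry. In the first sum, $N^+_G[p+2]=C$, and every $x_{j,k}$ with $k\in C$ vanishes except possibly $k=j$. In the second sum, $m$ ranges over $C$ (plus $j$ itself), and $x_{m,p+2}=0$ for $m\ne p+2$, while $x_{p+2,p+2}=0$ because $p+2$ is looped. The equation collapses to $a_{p+2,j}x_{jj}=0$, so $x_{jj}=0$. This is the step where the loop at $p+2$ is essential, and I expect finding it to be the main obstacle: naive entries such as $(p+1,j)$ only give two-term relations of the form $a_{p+1,p}x_{j,p}+a_{p+1,j}x_{jj}=0$.

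\emph{Step 3: clearing the path.} With the clique diagonal gone, the $(p+1,j)$ entry for $j\in C\setminus\{p+1\}$ becomes the monomial $a_{p+1,p}x_{j,p}=0$. Here $k$ ranges over $\{p\}\cup C$ and all other terms are already zero, and the second sum involves only $x_{m,p+1}$ with $m\in C$, which vanish. Then we induct downward along the path. Suppose $x_{m,i}=x_{m,i+1}=0$ for all $m\in C$, which holds for $i=p$. The $(i,j)$ entry for $i\le p$ and $j\in C$ reads
\[
a_{i,i-1}x_{j,i-1}+a_{i,i}x_{j,i}+a_{i,i+1}x_{j,i+1}-\sum_{m\in C}a_{m,j}x_{m,i}=0,
\]
which reduces to $a_{i,i-1}x_{j,i-1}=0$. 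This gives $x_{j,i-1}=0$ for every $j\in C$, and the induction proceeds down to $i=1$. By the symmetry remark the transposed entries also vanish. Hence $X=O$, every $A\in\calM(G)$ has the nSSP, and $\dvec{L}_{p,k,\loops}$ requires the nSSP.
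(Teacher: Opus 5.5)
Your proof is correct and follows the paper's argument step for step: first Lemma~\ref{lem:induced-path-all-looped} on $[p+1]$, then the loop at $p+2$ to kill the diagonal entries $x_{jj}$ of the unlooped clique vertices, then clearing the path--clique entries from vertex $p$ down to $1$ by monomial equations together with Remark~\ref{rem:symmetric-pattern}. The only differences are that you read off the transposed commutator entries ($(p+2,j)$ and $(i,j)$ instead of the paper's $(j,p+2)$ and $(j,i)$), which is equivalent by symmetry, and that your Step~2 collapse to a monomial actually requires $j\ge p+3$, since for $j=p+1$ an extra term $-a_{p,p+1}x_{p,p+2}$ appears; this is harmless because the unlooped indices you need are exactly those with $j\ge p+3$.
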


\begin{proof}
 Let $A \in \mathcal{M}(G)$ and $X \in \overline{\mathcal{M}}(G^c)$ such that $[A,X\trans]=O$. Observe that the induced subgraph on vertices $[p+1]$ is $\dvec{P}_{p+1,[p+1]}$ and $N_G[i]\subseteq [p+1]$ for all $i\in [p]$. By Lemma~\ref{lem:induced-path-all-looped} it follows that $x_{ij}=0$ for all $1\leq i,j\leq p+1$.
 Therefore $X\in \overline{\mathcal{M}}(G_1^c)$ for $G_1=G+\{\{i,j\}: 1\leq i, j \leq p+1, \ i \neq j\}$.

 For every $j\notin \loops$ we have $N^+_G[j]\cap N^+_{G_1}[p+2]^c=\emptyset$ and $N^-_G[p+2]\cap N^-_{G_1}[j]^c=\{j\}$ and by Lemma~\ref{lem:rule1} we also have  $x_{j,j}=0$ for all $j\notin \mathcal{L}$. Thus $X\in \overline{\mathcal{M}}(G_2^c)$ for $G_2=G_1+\{(\ell,\ell)\colon  \ell \notin \loops\}$. For $G=\dvec{L}_{5,6,[8]}$ see the digraphs $G_1$ and $G_2$ on Figure~\ref{fig:ex-lollipop-5-6-8-G1-G2}.

 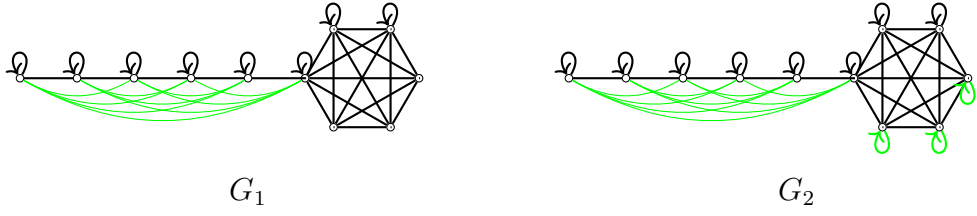
\begin{figure}[htb]
\begin{tikzpicture}[scale=0.75]
\foreach \i in {1,...,6} {
    \pgfmathsetmacro{\angle}{60 * (\i - 1)}
    \node[fill=white] (\i) at (\angle:1) {};
};
\foreach \i in {1,...,6} {
\foreach \j in {\i,...,6} {
 \draw[thick] (\i) -- (\j);
}};
\foreach \i in {7,...,11} {
     \draw[thick] (\i-13,0) -- (\i-12,0);
    \node[fill=white] (\i) at (\i-13,0) {};
};
 \tikzset{every loop/.style={min distance=4mm,in=120,out=60,looseness=20}}
\foreach \i in {2,3,4,7,8,9,10,11} {
\draw[thick,->] (\i) to[loop above] (\i);};
\tikzset{every loop/.style={min distance=4mm,in=240,out=300,looseness=20}}
\draw[green] (4) to[bend left] (10);
\draw[green] (4) to[bend left] (9);
\draw[green] (4) to[bend left] (8);
\draw[green] (4) to[bend left] (7);
\draw[green] (11) to[bend left] (9);
\draw[green] (11) to[bend left] (8);
\draw[green] (11) to[bend left] (7);
\draw[green] (10) to[bend left] (8);
\draw[green] (10) to[bend left] (7);
\draw[green] (9) to[bend left] (7);

\node[rectangle, draw=none] at (-2,-2) {$G_1$};
\end{tikzpicture}\qquad\qquad
\begin{tikzpicture}[scale=0.75]
\foreach \i in {1,...,6} {
    \pgfmathsetmacro{\angle}{60 * (\i - 1)}
    \node[fill=white] (\i) at (\angle:1) {};
};
\foreach \i in {1,...,6} {
\foreach \j in {\i,...,6} {
 \draw[thick] (\i) -- (\j);
}};
\foreach \i in {7,...,11} {
     \draw[thick] (\i-13,0) -- (\i-12,0);
    \node[fill=white] (\i) at (\i-13,0) {};
};
 \tikzset{every loop/.style={min distance=4mm,in=120,out=60,looseness=20}}
\foreach \i in {2,3,4,7,8,9,10,11} {\draw[->,thick] (\i) to[loop above] (\i);};
\tikzset{every loop/.style={min distance=4mm,in=240,out=300,looseness=20}}
\foreach \i in {1,5,6} {
\draw[thick, green,->] (\i) to[loop below] (\i);};
\draw[green] (4) to[bend left] (10);
\draw[green] (4) to[bend left] (9);
\draw[green] (4) to[bend left] (8);
\draw[green] (4) to[bend left] (7);
\draw[green] (11) to[bend left] (9);
\draw[green] (11) to[bend left] (8);
\draw[green] (11) to[bend left] (7);
\draw[green] (10) to[bend left] (8);
\draw[green] (10) to[bend left] (7);
\draw[green] (9) to[bend left] (7);

\node[rectangle, draw=none] at (-2,-2) {$G_2$};
\end{tikzpicture}
\caption{Supergraphs $G_1$ and $G_2$ of the lollipop digraph $G=\dvec{L}_{5,6,[8]}$ from the proof of Corollary~\ref{ex:lollipop}. In black, the double lollipop digraph $\dvec{L}_{5,6,[8]}$ shown, and all undirected edges represent double arcs.}\label{fig:ex-lollipop-5-6-8-G1-G2}
\end{figure}

For $i=2,3,\dots,p+1$ let us define $G_{i+1}:=G_i+\{\{p+2-i,j\} \colon j> p+1\}$. We will prove the claim that if $X\in \overline{\mathcal{M}}(G_i^c)$ for some $i\leq p$, then $X\in \overline{\mathcal{M}}(G_{i+1}^c)$.

For $i=2$  observe that $N^+_{G_2}[p+1]=N^-_{G_2}[p+1]=V(G)$ and so $N_G^+[j]\cap N^+_{G_2}[p+1]^c=\emptyset$ for all $j>p+1$. Also, $N_G^-[p+1]\cap N^-_{G_2}[j]^c=\{p\}$, so it follows that $x_{p,j}=x_{j,p}=0$, and so $X\in \overline{\mathcal{M}}(G_3^c)$. Hence the claim is true for $i=2$.
Assume now that $X\in \overline{\mathcal{M}}(G_i^c)$ for some $i\geq 2$. Then $N^+_{G_i}[p+3-i]=N^-_{G_i}[p+3-i]=V(G)$  and so $N_G^+[j]\cap N^+_{G_i}[p+3-i]^c=\emptyset$ and $N_G^-[p+3-i]\cap N^-_{G_i}[j]^c=\{p+2-i\}$ for all $j>p+1$. By Lemma~\ref{lem:rule1} and Remark~\ref{rem:symmetric-pattern} it follows that $x_{p+2-i,j}=x_{j,p+2-i}=0$, and so $X\in \overline{\mathcal{M}}(G_{i+1}^c)$.

Now, inductively we have $X\in \overline{\mathcal{M}}(G_{p+2}^c)$. Since $G_{p+2}=K_{n}^{\circ}$, it follows that $X=O$ and so $G$ requires the nSSP.
 \end{proof}
 
 \begin{example}
   Observe that if $\loops=[p+1]$, then $\dvec{L}_{p,k,\loops}$ does not necessarily require the nSSP. For example, for matrices $$A=\left(
\begin{array}{ccccc}
 -1 & 1 & 0 & 0 & 0 \\
 1 & 1 & -2 & 1 & 1 \\
 0 & 1 & 0 & -2 & 1 \\
 0 & 1 & -2 & 0 & 1 \\
 0 & 1 & 4 & -5 & 0 \\
\end{array}
\right)\in {\mathcal{M}}(\dvec{L}_{1,4,[2]})
 \text{ and }
 X=\left(
\begin{array}{ccccc}
 0 & 0 & 1 & 1 & 1 \\
 0 & 0 & 0 & 0 & 0 \\
 -2 & 0 & -1 & 0 & 0 \\
 1 & 0 & 0 & -1 & 0 \\
 1 & 0 & 0 & 0 & -1 \\
\end{array}
\right)$$ 
we have $A\circ X=[A,X\trans]=O$, and so $A$ does not have the nSSP.
 \end{example}

 \subsection{Graphs with pendent loopless double paths}
 
 Next, we present another lemma concerning pendent paths in which all vertices on the path are non-looped.
 
\begin{lemma}\label{lem:induced-path-loopless}
  Let $m,n\in \bN$, $m< n$, and let $G$ be a digraph with $V(G)=[n]$ such that its induced subgraph on vertices $[m]$ is $\dvec{P}_{m,\emptyset}$ such that $N_{G}[i]\subseteq [m]$ for all $i\in[m-1]$. 
 If $A\in \mathcal{M}(G)$ and $X=(x_{ij})\in \overline{\mathcal{M}}(G^c)$ such that $[A,X\trans]=O$, then $x_{ij}=0$ for all $1\leq i, j \leq m$, $|i-j|$ is odd.

    Moreover, if $N^+_G[m]=N^-_G[m]=\{m-1,m+1\}$ and $m+1\in {\mathcal L}(G)$, then $x_{ij}=0$ for all $1\leq i, j \leq m+1$, $|i-j|$ is odd.
    
\end{lemma}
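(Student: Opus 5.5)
We follow the proof of Lemma~\ref{lem:induced-path-all-looped}, now with all diagonal entries $a_{i,i}=0$ for $i\in[m]$. As there, set $a_{k,\ell}=x_{k,\ell}=0$ whenever $k\le 0$ or $\ell\le 0$.

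For $i,j\in[m-1]$ we have $N_G^+[i]=N_G^-[i]=\{i-1,i+1\}\cap[n]$. Hence the $(i,j)$ entry of the commutator reduces to
\begin{equation*}
0=[A,X\trans]_{i,j}=a_{i,i-1}x_{j,i-1}+a_{i,i+1}x_{j,i+1}-a_{j+1,j}x_{j+1,i}-a_{j-1,j}x_{j-1,i}.
\end{equation*}
Unlike the looped case, the loop terms $a_{i,i}x_{j,i}$ and $a_{j,j}x_{j,i}$ are absent. Each equation therefore involves only entries $x_{p,q}$ with $p-q\equiv j-i\pm1 \pmod 2$. Taking $j-i$ even forces entries at odd distance, which is the parity structure we want. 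A priori $X$ vanishes on the arcs of $G$, so $x_{i,i\pm1}=0$ for $i$ in the path. These are the odd-distance entries at distance $1$.

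We proceed by induction on odd distance $d=2s+1$. Let $G_{d}$ be $G$ together with all double arcs $\{i,i+e\}$ in $[m]$ with $e$ odd and $e\le d$. Suppose $X\in\overline{\calM}(G_d^c)$. We look at the equations $[A,X\trans]_{i,i+d+1}$, which correspond to even difference $d+1$.

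For $i=1$ the term $a_{1,0}$ vanishes. The term $a_{1,2}x_{d+2,2}$ is at distance $d$ and so is zero, and $a_{d+1,d+2}x_{d+1,1}$ is also at distance $d$ and zero. This leaves the monomial $-a_{d+3,d+2}x_{d+3,1}=0$, which gives $x_{d+3,1}=0$? We must check the indices: the distance here is $d+2$, which is odd, as required.

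For $2\le i$ the equation becomes $a_{i,i-1}x_{i+d+1,i-1}-a_{i+d+2,i+d+1}x_{i+d+2,i}=0$, because the other two terms are at distance $d$. Both surviving entries are at distance $d+2$. This is exactly the chain recursion \eqref{eq:dist-k}, and starting from $x_{d+3,1}=0$ it propagates to $x_{i+d+2,i}=0$ for every $i$ with $i+d+2\le m$.

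The transposed entries $x_{i,i+d+2}$ follow from Remark~\ref{rem:symmetric-pattern}. Both $A$ restricted to the path and the current zero pattern of $X$ are symmetric, so the mirrored equations $(j,i)$ give the same conclusion. Iterating until $d+2>m-1$ yields $x_{ij}=0$ for all odd $|i-j|$ in $[m]$.

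The step I expect to be the main obstacle is the bookkeeping at the upper end of the path. The hypothesis $N_G[i]\subseteq[m]$ holds only for $i\le m-1$, so equation $(i,j)$ may be used only when both $i,j\le m-1$. When $j$ could equal $m$, row $m$ of $A$ can have entries outside $[m]$, and we must restrict to the equations indexed inside $[m-1]$. We then check that they still reach every pair $(i+d+2,i)$ with $i+d+2\le m$. The largest index involved is $j+1=i+d+2$, so $j\le m-1$ and this suffices.

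For the moreover part, $N^\pm_G[m]=\{m-1,m+1\}$ makes the equations with $j=m$ or $i=m$ have the same shape as before, now involving vertex $m+1$. That vertex has a loop term $a_{m+1,m+1}$. This loop term appears only in equation $(m+1,\cdot)$, which we never use. In the equations $(i,m)$, vertex $m+1$ enters only through $a_{m+1,m}x_{m+1,i}$. So the recursion \eqref{eq:dist-k} extends one step, to $i+d+2=m+1$. The base case $x_{m+1,m}=0$ holds because $\{m,m+1\}$ is an arc of $G$. Symmetry again follows from Remark~\ref{rem:symmetric-pattern}, since the relevant neighbourhoods are symmetric.
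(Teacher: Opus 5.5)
Your proposal is correct and follows essentially the same route as the paper. Both use the four-term equations $[A,X\trans]_{i,i+d+1}=0$ on the loopless path, start each odd off-diagonal with the monomial equation at $i=1$ and propagate it along a chain, obtain the transposed entries via Remark~\ref{rem:symmetric-pattern}, induct on the odd distance, and get the moreover part by letting the equations run up to index $m$. The differences are cosmetic: you fold the paper's separate distance-$3$ base case into an induction starting at $d=1$, and you track the admissible index range ($j\le m-1$, resp.\ $j\le m$) more explicitly than the paper does.
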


\begin{proof}
Let $A=\begin{pmatrix}
       a_{i,j}
   \end{pmatrix}\in \mathcal{M}(G)$,  $X=(x_{i,j})\in \overline{\mathcal{M}}(G^c)$, and let us define $a_{k,\ell}=x_{k,\ell}=0$ if $k\leq 0$ or $\ell \leq 0.$  Observe first that 
   $N_G^+[i]=N_G^-[i]$ and $N_G^+[i]=\{i-1, i+1\}$ for all $i\in [m-1]$. 
   Therefore, for all $i,j \in [m-1]$ we have
   \begin{align}\label{eq:i-j-loops-case}
       0=[A,X\trans]_{i,j}=&a_{i,i-1}x_{j,i-1}+a_{i,i+1}x_{j,i+1}-a_{j+1,j}x_{j+1,i}-a_{j-1,j}x_{j-1,i}.
   \end{align}
   In particular, when $j=i+2$, it follows that $0=[A,X\trans]_{1,3}=-a_{4,3}x_{4,1}=0$ and $$0=[A,X\trans]_{i,i+2}=a_{i,i-1}x_{i+2,i-1}-a_{i+3,i+2}x_{i+3,i}$$
   for all $i$, $2 \leq i \leq m-3$. Since $a_{j+1,j}\ne 0$ for $j\in [m-2]$, it follows inductively that for all $i\in [m-3]$ we have $x_{i+3,i}=0$. By Remark~\ref{rem:symmetric-pattern}, $x_{i,i+3}=0$  as well.

 Therefore $X\in \overline{\mathcal{M}}(G_3^c)$ for $G_3=G+\{\{i,i+3\}\colon i\in [m-3]\}$. For all odd $k$, $3\leq k\leq m-1$ let us define $G_k=G+\{\{i,i+j\}\colon j\in \{1,3,5,\ldots,k\}, i\in [m-j]\}$ and $G_1=G$. We will show that if $X\in \overline{\mathcal{M}}(G_{k}^c)$, then $X\in \overline{\mathcal{M}}(G_{k+2}^c)$ for all odd $k\in [m-3]$.

   We proved the statement is true for $k=1$. Let $X\in \overline{\mathcal{M}}(G_{k}^c)$ for some odd $k$, i.e.~$x_{i,i+j}=0$ for all $i\in [m-k]$ and all odd $j\in [k]$. Equation~\eqref{eq:i-j-loops-case} simplifies to 
   \begin{align}
       \label{eq:1-k+1}
   0=[A,X\trans]_{1,k+2}&=a_{1,2}x_{k+2,2}-a_{k+3,k+2}x_{k+3,1}-a_{k+1,k+2}x_{k+1,1}=-a_{k+3,k+2}x_{k+3,1}
   \end{align}
   and 
   \begin{align}\label{eq:dist-k-odd}
       0&=[A,X\trans]_{i,i+k+1}=\notag\\
       &=a_{i,i-1}x_{i+k+1,i-1}+a_{i,i+1}x_{i+k+1,i+1}-a_{i+k+2,i+k+1}x_{i+k+2,i}-a_{i+k,i+k+1}x_{i+k,i}=\notag \\
       &=a_{i,i-1}x_{i+k+1,i-1}-a_{i+k+1+1,i+k+1}x_{i+k+2,i}       
   \end{align}
   and for all $2\leq i \leq m-k-1$. By~\eqref{eq:1-k+1} and Remark~\ref{rem:symmetric-pattern} it follows that $x_{k+3,1}=x_{1,k+3}=0$. Therefore, equations~\eqref{eq:dist-k-odd} imply that for all $i\in [m-k-1]$ we have $x_{i+k+2,i}=0$. By Remark~\ref{rem:symmetric-pattern} it follows that $x_{i,i+k+2}=0$, and so $X\in \overline{\mathcal{M}}(G_{k+2}^c)$.
This inductively implies that $X\in \overline{\mathcal{M}}(G_{K}^c)$ for the largest odd number $K\in [m]$ and so $x_{i,j}=0$ for all $i,j \in [m]$ if $|i-j|$ is odd.

Again, for the moreover case observe that~\eqref{eq:i-j-loops-case} extends to $i,j \in [m]$, which completes the proof.
\end{proof}

\begin{example}\label{example:doublepath}
Consider a digraph $\dvec{P}_{10,[3]}$. Let $A\in \mathcal{M}(G)$ and let $X=(x_{i,j})$ be such that $A\circ X=[A,X\trans]=O$. Then 
$$X=\begin{pmatrix}
 0 & 0 & {\color{blue} 0} & {\color{blue} 0} & x_{1,5} & x_{1,6} & x_{1,7} & x_{1,8} & x_{1,9} & x_{1,10} \\
 0 & 0 & 0 & {\color{blue} 0} & x_{2,5} & x_{2,6} & x_{2,7} & x_{2,8} & x_{2,9} & x_{2,10} \\
 {\color{blue} 0} & 0 & 0 & 0 & x_{3,5} & {\color{magenta} 0} & x_{3,7} & {\color{magenta} 0} & x_{3,9} & {\color{magenta} 0} \\
 {\color{blue} 0} & {\color{blue} 0} & 0 & x_{4,4} & 0 & x_{4,6} & {\color{magenta} 0} & x_{4,8} & {\color{magenta} 0} & x_{4,10} \\
 x_{5,1} & x_{5,2} & x_{5,3} & 0 & x_{5,5} & 0 & x_{5,7} & {\color{magenta} 0} &  x_{5,9} & {\color{magenta} 0} \\
x_{6,1} & x_{6,2} & {\color{magenta} 0} & x_{6,4} & 0 & x_{6,6} & 0 &  x_{6,8} & {\color{magenta} 0} &  x_{6,10} \\
 x_{7,1} & x_{7,2} & x_{7,3} &{\color{magenta} 0} & x_{7,5} & 0 &  x_{7,7} & 0 &  x_{7,9} & {\color{magenta} 0} \\
 x_{8,1} & x_{8,2} & {\color{magenta} 0} & x_{8,4} & {\color{magenta} 0} &  x_{8,6} & 0 &  x_{8,8} & 0 &  x_{8,10} \\
 x_{9,1} & x_{9,2} & x_{9,3} & {\color{magenta} 0} & x_{9,5} & {\color{magenta} 0} &  x_{9,7} & 0 &  x_{9,9} & 0 \\
 x_{10,1} & x_{10,2} & {\color{magenta} 0} & x_{10,4} & {\color{magenta} 0} &  x_{10,6} & {\color{magenta} 0} &  x_{10,8} & 0 &  x_{10,10} \\
\end{pmatrix},$$
where by Lemma~\ref{lem:induced-path-all-looped} it follows that $x_{i,j}=0$ for all $\{i,j\}\in [4]\times [4]\setminus \{4,4\}$ (denoted by blue in matrix $X$) and by Lemma~\ref{lem:induced-path-loopless} it follows that $x_{i,i+3}=x_{i+3,i}=x_{j,j+5}=x_{j+5,j}=x_{3,10}=x_{10,3}=0$ for all $3\leq i\leq 7$ and $3\leq j\leq 5$ (denoted by pink in $X$). 

In Theorem~\ref{thm:double_paths-first-m-loops} we will show that the equation $[A,X\trans]=O$ forces also all other entries of $X$ to be $0$.
\end{example}

\section{Double paths}\label{sec:double-paths}

In this section, we investigate Problem \ref{problem-6.5}, and 
identify several classes of loop assignments $\loops \subseteq [n]$ on double paths, for which the corresponding graph $P_{n,\loops}$ either requires the nSSP, allows but does not require the nSSP, or does not allow the nSSP. The following motivating example highlights the importance of the digraphs that require the nSSP.

\begin{example}
 In~\cite{2024-Breen-Allow-sequence} the authors prove that matrices $H_1$, $H_2$, $H_3$, $H_4$, $H_7$ and $H_{13}$ have the nSSP. In Example~\ref{ex:DH7} we showed that the corresponding ${\mathcal D}(H_7)$, shown on Figure~\ref{fig:D1-13-from-Breen}, requires the nSSP.  
 All other digraphs ${\mathcal D}(H_1)$, ${\mathcal D}(H_2)$, ${\mathcal D}(H_3)$, ${\mathcal D}(H_4)$ and ${\mathcal D}(H_{13})$, shown on Figure~\ref{fig:D1-13-from-Breen}, are double paths and we will show  in Theorems~\ref{thm:double_paths-first-m-loops} and~\ref{thm:double-path-vtx-2} that they all require the nSSP. In particular, all their sign patterns require the nSSP. Therefore, when selecting matrices $H_1$, $H_2$, $H_3$, $H_4$, $H_7$ and $H_{13}$ with the desired spectrum, the nSSP is automatically guaranteed. 

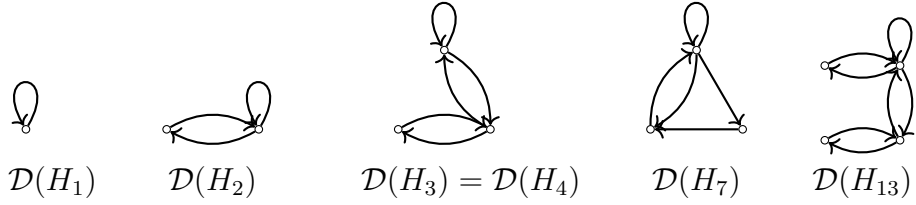
\begin{figure}[h]
\begin{tikzpicture}[scale=0.7]
 \node[fill=white] (1) at (0,-0.5) {};
 
\tikzset{every loop/.style={min distance=4mm,in=120,out=60,looseness=40}}
\foreach \i in {1} {
     \draw[thick,->] (\i) to[loop above] (\i);
};
\node[rectangle, draw=none] at (0.5,-1.5) {${\mathcal D}(H_1)$};
\end{tikzpicture}\qquad
\begin{tikzpicture}[scale=0.7]
\foreach \i in {1,2} {
    \node[fill=white] (\i) at (90+120*\i:1) {};
   
};

\tikzset{every loop/.style={min distance=4mm,in=120,out=60,looseness=40}}
\foreach \i in {2} {
     \draw[thick,->] (\i) to[loop above] (\i);
};

\draw[->, thick] (1) to[bend left] (2);
\draw[->, thick] (2) to[bend left] (1);

\node[rectangle, draw=none] at (0,-1.5) {${\mathcal D}(H_2)$};
\end{tikzpicture}\qquad
\begin{tikzpicture}[scale=0.7]
\foreach \i in {1,...,3} {
    \node[fill=white] (\i) at (90+120*\i:1) {};
  
};

\tikzset{every loop/.style={min distance=4mm,in=120,out=60,looseness=40}}
\foreach \i in {3} {
     \draw[thick,->] (\i) to[loop above] (\i);
};

\draw[->, thick] (1) to[bend left] (2);
\draw[->, thick] (2) to[bend left] (1);
\draw[->, thick] (2) to[bend left] (3);
\draw[->, thick] (3) to[bend left] (2);

\node[rectangle, draw=none] at (0.5,-1.5) {${\mathcal D}(H_3)={\mathcal D}(H_4)$};
\end{tikzpicture}\qquad
\begin{tikzpicture}[scale=0.7]
\foreach \i in {1,...,3} {
    \node[fill=white] (\i) at (90+120*\i:1) {};
 
};

\tikzset{every loop/.style={min distance=4mm,in=120,out=60,looseness=40}}
\foreach \i in {3} {
     \draw[thick,->] (\i) to[loop above] (\i);
};
 
\draw[->, thick] (1) to[bend left] (3);
\draw[->, thick] (3) to[bend left] (1);
\draw[->, thick] (2)--(1);
\draw[->, thick] (3)--(2);

\node[rectangle, draw=none] at (0,-1.5) {${\mathcal D}(H_7)$};
\end{tikzpicture}\qquad
\begin{tikzpicture}[scale=0.7]
\foreach \i in {1,...,4} {
    \node[fill=white] (\i) at (135+90*\i:1) {};
  
};

\tikzset{every loop/.style={min distance=4mm,in=120,out=60,looseness=40}}
\foreach \i in {3} {
     \draw[thick,->] (\i) to[loop above] (\i);
};

\draw[->, thick] (1) to[bend left] (2);
\draw[->, thick] (2) to[bend left] (1);
\draw[->, thick] (2) to[bend left] (3);
\draw[->, thick] (3) to[bend left] (2);
\draw[->, thick] (4) to[bend left] (3);
\draw[->, thick] (3) to[bend left] (4);

\node[rectangle, draw=none] at (0,-1.5) {${\mathcal D}(H_{13})$};
\end{tikzpicture}
\caption{Five examples of digraphs from~\cite{2024-Breen-Allow-sequence} on at most four vertices, all requiring the nSSP.}\label{fig:D1-13-from-Breen}
\end{figure}
\end{example}

We start with a technical lemma that shows that the commuting condition $[A,X\trans]=O$ on some tridiagonal matrices $A$ force many entries in matrix $X$ of the complementary pattern to be zero.

\begin{lemma}\label{lem:pendent-looped-path-on-a-path}
 Let $m,n\in \bN$ with $m< n$, and let $G:=\dvec{P}_{n,\loops}$, $[m] \subseteq \loops$ and $m+1\notin \loops$. If $A\in{\mathcal M}(G)$, $X=(x_{ij}) \in \overline{\mathcal{M}}(G^c)$ and  $[A,X\trans]=0$, then $x_{ij}=0$ for all $i+j\leq 2m+1$.  
\end{lemma}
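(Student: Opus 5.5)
The plan is to combine the ``moreover'' part of Lemma~\ref{lem:induced-path-all-looped} with a double induction along anti-diagonals, using only the entries of the equation $[A,X\trans]=O$ for the tridiagonal matrix $A$.

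\textbf{Step 1: the corner block.} Since $[m]\subseteq\loops$, the induced subgraph of $G=\dvec{P}_{n,\loops}$ on $[m]$ is $\dvec{P}_{m,[m]}$, and $N_G[i]\subseteq[m]$ for $i\in[m-1]$. Moreover $N^+_G[m]=N^-_G[m]=\{m-1,m,m+1\}$ and $m+1\notin\loops$. The ``moreover'' part of Lemma~\ref{lem:induced-path-all-looped} therefore gives $x_{ij}=0$ for all $i\ne j$ with $i,j\in[m+1]$. It also gives $x_{ii}=0$ for $i\in[m]$, which in any case follows from $A\circ X=O$, since these vertices are looped.

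Every pair with $i+j\le 2m+1$ and $i,j\le m+1$ is therefore already handled, because $(m+1,m+1)$ has index sum $2m+2$. By symmetry (Remark~\ref{rem:symmetric-pattern}: both $A$ and the known zero pattern of $X$ are symmetric, so we only need to kill $x_{k,i}$ and then $x_{i,k}$ follows), it remains to treat the entries $x_{k,i}$ with $k\ge m+2$, $k\le n$ and $k+i\le 2m+1$. In particular these satisfy $i\le m-1$.

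\textbf{Step 2: the key identity.} Using the conventions $a_{0,\cdot}=x_{\cdot,0}=0$ and $a_{jj}=0$ for $j\notin\loops$, the $(i,j)$ entry of the commutator reads
\[
0=a_{i,i-1}x_{j,i-1}+a_{i,i}x_{j,i}+a_{i,i+1}x_{j,i+1}-a_{j+1,j}x_{j+1,i}-a_{j,j}x_{j,i}-a_{j-1,j}x_{j-1,i}.
\]
Apply this with $j=k-1$. The only unknown entries occurring are:
\begin{itemize}
\item the target $x_{k,i}$, with coefficient $-a_{k,k-1}\ne0$;
\item $x_{k-1,i+1}$, which lies on the same anti-diagonal $s=k+i$ but is one step closer to the diagonal;
\item $x_{k-1,i-1}$, $x_{k-1,i}$ and $x_{k-2,i}$, which all lie on anti-diagonals with smaller sum.
\end{itemize}

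\textbf{Step 3: the double induction.} The outer induction is on $s=k+i\le 2m+1$. Inside a fixed anti-diagonal, the inner induction runs on decreasing $i$, that is, moving away from the diagonal starting at the entry with $k=m+2$. The base of the inner induction is that $x_{k-1,i+1}$ with $k-1=m+1$ lies in $[m+1]^2$ off the diagonal, so it vanishes by Step~1. The entries with smaller sum vanish either by the outer induction hypothesis or by Step~1, and entries with an index $0$ vanish by convention.

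In each case the identity of Step~2 collapses to the monomial equation $a_{k,k-1}x_{k,i}=0$, which forces $x_{k,i}=0$. Symmetry then gives $x_{i,k}=0$.

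\textbf{Main obstacle.} The delicate point is the bookkeeping. We must check that every auxiliary entry appearing in the identity really has index sum at most $s$, and that when the sum equals $s$ the entry was treated earlier in the inner order. We also need the diagonal-type entries $x_{k-1,i}$ and $x_{k-2,i}$ to fall into the already-handled region; here $i\le m-1<k-2$ guarantees they are off-diagonal. Finally, the cases $i=1$ and $k=n$ must be covered by the zero conventions. None of this uses the loop status of vertices beyond $m+1$, which is exactly why the bound $i+j\le 2m+1$ is the natural limit of the argument.
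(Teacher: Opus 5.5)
Your proof is correct and follows essentially the paper's route. Both arguments use the moreover part of Lemma~\ref{lem:induced-path-all-looped} to clear the corner block $[m+1]^2$, and then use the $(i,k-1)$ entries of $[A,X\trans]$ to propagate zeros outward; these are the monomial equations of Lemma~\ref{lem:rule1}, transposed via Remark~\ref{rem:symmetric-pattern}.

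The only difference is the induction order. The paper inducts on the large index $k$ alone, since every auxiliary entry in that equation has large index $k-1$ or $k-2$. Your double induction over anti-diagonals is also valid, but it is more elaborate than needed.
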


\begin{proof}
Since the vertex $m+1$ is the first non-looped vertex on the path,  using Lemma~\ref{lem:induced-path-all-looped}  we have $X \in \overline{\mathcal{M}}(G_1^c)$, where $G_{1}=G+\{\{i,j\}: 1\leq i, j \leq m+1, i \neq j\}$. 
For every  $i\in [m-1]$ we have 
$N^{+}_G[m+1]\cap N^{+}_{G_1}[i]^c=\{m+2\}$ and $N^{-}_G[i]\cap N^{-}_{G_1}[m+1]^c=\emptyset$,  
and so by Lemma~\ref{lem:rule1} and Remark~\ref{rem:symmetric-pattern} we have $x_{i,m+2}=x_{m+2,i}=0$ for all $i\in [m-1]$, or equivalently 
 $X\in \overline{\mathcal{M}}(G_{2}^c)$ for $G_{2}=G_1+\{\{i,m+2\}\colon  i\in [m-1]\}$.

Let $M:=\min\{m+1,n-m\}$ and for $k=2,\dots,M$ let us define $$G_{k}:=G_{k-1}+\{\{j,m+k\} \colon j\in [m-k+1]\}.$$ If $X\in \overline{\mathcal{M}}(G_{k}^c)$ for some $k\leq M-1$, then $x_{i,m+k}=0$ for all $i\leq m-k+1$. Hence
for all $j\in [m-k-1]$ we have 
$N^{+}_G[j]\cap N^{+}_{G_{k}}[m+k]^c=\emptyset$ and $N^{-}_G[m+k]\cap N^{-}_{G_{k}}[j]^c=\{m+k+1\}$.
Therefore by Lemma~\ref{lem:rule1} and Remark~\ref{rem:symmetric-pattern} it follows that $X\in \overline{\mathcal{M}}(G_{k+1}^c)$. Since we proved that $X\in \overline{\mathcal{M}}(G_{2}^c)$, it follows that $X\in \overline{\mathcal{M}}(G_M^c)$ and so $x_{ij}=0$ for all $i+j\leq 2m+1$.
\end{proof}

\begin{theorem}\label{thm:double_paths-lots-of-loops}  
  Let $n\in \bN$ and choose $\loops\subseteq [n]$. Let $m$ be maximal integer, such that $[m] \subseteq \loops$. If $m>\frac{1}{2}(n-1)$, then $\dvec{P}_{n,\loops}$ requires the nSSP.  
\end{theorem}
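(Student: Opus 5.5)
The plan is to combine Lemma~\ref{lem:pendent-looped-path-on-a-path}, which clears the ``upper-left'' anti-diagonals of $X$, with a second sweep along anti-diagonals that starts at the boundary vertex $n$ and clears everything else.

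Fix $A=(a_{ij})\in\calM(\dvec{P}_{n,\loops})$ and $X=(x_{ij})\in\overline{\calM}(G^c)$ with $[A,X\trans]=O$. If $m=n$, then the induced subgraph on $[n]$ is $\dvec{P}_{n,[n]}$, and Lemma~\ref{lem:induced-path-all-looped} applied with $m=n$ gives $X=O$ directly. So assume $m<n$; by maximality of $m$ we have $m+1\notin\loops$. Lemma~\ref{lem:pendent-looped-path-on-a-path} then yields $x_{ij}=0$ for all $i+j\le 2m+1$. Since $m>\frac12(n-1)$, we have $2m+1\ge n+1$, so at least every anti-diagonal up to and including the main anti-diagonal $i+j=n+1$ is zero.

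I would first check that the hypothesis on $m$ really makes the lemma deliver this full range. In its proof the sweep runs up to $M=\min\{m+1,n-m\}$, which here equals $n-m$. So I must verify that the entries $x_{i,m+k}$ killed for $k\le n-m$, together with the block $[m+1]\times[m+1]$ and symmetry, exhaust all positions with $i+j\le 2m+1$. This bookkeeping is the step I expect to be the main obstacle. If it falls short, I would run the same Lemma~\ref{lem:rule1} argument a few more steps, using $n-m\le m$ so that the relevant neighbourhoods stay inside the already-cleared region.

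Next I would prove, by induction on $s\ge 2m+2$, that $x_{ij}=0$ whenever $i+j=s$. Write the $(i,j)$ equation of $[A,X\trans]=O$ for tridiagonal $A$:
$$0=a_{i,i-1}x_{j,i-1}+(a_{ii}-a_{jj})x_{j,i}+a_{i,i+1}x_{j,i+1}-a_{j+1,j}x_{j+1,i}-a_{j-1,j}x_{j-1,i},$$
using the convention that entries with an index $0$ or $n+1$ are zero. Take $i+j=s-1$. By the induction hypothesis all terms with index sum $s-2$ or $s-1$ vanish, which leaves
$$a_{i,i+1}x_{j,i+1}=a_{j+1,j}x_{j+1,i}.$$
Now start at $i=n$, $j=s-1-n$. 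Here $x_{j,n+1}$ does not exist, so the equation becomes the monomial $a_{j+1,j}x_{j+1,n}=0$, and hence $x_{j+1,n}=0$. Stepping $i\mapsto i-1$ along the anti-diagonal and using $a_{i,i+1},a_{j+1,j}\ne0$, the displayed relation propagates zeros through every entry with $i+j=s$. For $i=j$ this includes the diagonal, where $x_{ii}$ is already $0$ whenever $i\in\loops$.

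Carrying the induction up to $s=2n$ gives $X=O$, so $A$ has the nSSP. Since $A$ was arbitrary, $\dvec{P}_{n,\loops}$ requires the nSSP.
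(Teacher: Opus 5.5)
Your proof is correct. Its first half, citing Lemma~\ref{lem:pendent-looped-path-on-a-path} to kill every $x_{ij}$ with $i+j\le 2m+1$, is exactly what the paper does. Your worry about the bookkeeping is unfounded: here $M=n-m\le m$, so the lemma's sweep reaches column $n$ and its stated conclusion holds in full. You can cite it directly.

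The second half takes a different route. The paper notes that the cleared triangle already contains the whole first $k=2m-n+1\ge 1$ rows and columns of $X$. It then sweeps row by row: once rows $1,\dots,r$ of $X$ vanish, the $(\ell,r)$ entry of $[A,X\trans]$ collapses to the single term $-a_{r+1,r}x_{r+1,\ell}$, so each new row dies at once with no chaining. You instead sweep anti-diagonals. You seed each one with the monomial equation at $i=n$; here $s\ge 2m+2\ge n+2$ guarantees $j=s-1-n\ge 1$. You then propagate through the two-term relations $a_{i,i+1}x_{j,i+1}=a_{j+1,j}x_{j+1,i}$, in the style of \eqref{eq:iplusj} in the proof of Theorem~\ref{thm:double_paths-first-m-loops}.

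Both arguments work. The paper's is shorter and makes clear that the hypothesis on $m$ only serves to place a full first row of $X$ inside the cleared triangle; the nonzero subdiagonal of $A$ then does the rest. Yours uses one uniform anti-diagonal mechanism for both this theorem and Theorem~\ref{thm:double_paths-first-m-loops}.

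You also handle $m=n$ separately via Lemma~\ref{lem:induced-path-all-looped}. The paper's proof skips this case, even though Lemma~\ref{lem:pendent-looped-path-on-a-path} requires $m<n$.
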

\begin{proof}
    Let $A \in \mathcal{M}(G)$ be arbitrary and $X=(x_{ij}) \in \overline{\mathcal{M}}(G^c)$ such that $[A,X\trans]=0$. By Lemma~\ref{lem:pendent-looped-path-on-a-path} it follows that $x_{ij}=0$ for all $i+j\leq 2m+1$. Let $k:=2m-n+1>0$ and observe that the first $k$ columns of $X$  and the first $k$ rows of $X$ are equal to $\bf 0$.

Note that for every $j\in\bN$, $0\leq j\leq 2n-2m-1$ and for each $i\in \{0,1,\ldots,j\}$ we have
\begin{align}\label{eq:large-m}
        [A,X\trans]_{n-i,k+j}&=a_{n-i,n-i-1}x_{k+j,n-i-1}+a_{n-i,n-i}x_{k+j,n-i}+a_{n-i,n-i+1}x_{k+j,n-i+1} \notag \\
        &\qquad -a_{k+j-1,k+j}x_{k+j-1,n-i}-a_{k+j,k+j}x_{k+j,n-i}-a_{k+j+1,k+j}x_{k+j+1,n-i}.
\end{align}

    In particular, since $x_{\ell,t}=0$ for all $\ell \leq k$, it follows using~\eqref{eq:large-m} for $i=j=0$, that
    $$[A,X\trans]_{n,k}=-a_{k+1,k}x_{k+1,n}.
    $$
    This implies $x_{k+1,n}=0$ and therefore the first $k+1$ rows of $X$ are equal to $\bf 0$. By symmetry also the first $k+1$ columns of $X$ are equal to $\bf 0$. 
    
    Now assume that the first $k+j$ columns and rows of $X$ are equal to $\bf 0$ for some $0\leq j\leq 2n-2m-1$. By~\eqref{eq:large-m} it follows that
    \begin{align*}
        [A,X\trans]_{n-i,k+j}&
        =a_{k+j+1,k+j}x_{k+j+1,n-i}
    \end{align*}
    for each $i\in \{0,1,\ldots,j\}$, which implies that $x_{k+j+1,n-i}=0$. By symmetry $x_{n-i,k+j+1}=0$ as well, and so the first $k+j+1$ columns and rows of $X$ are equal to $\bf 0$. Since we proved this statement for $j=0$, it follows that $X=O$. 
\end{proof}

In Theorem~\ref{thm:double-path-vtx-2}(\ref{item:without-middle-loop}) we show that Theorem~\ref{thm:double_paths-lots-of-loops} might not be true in the case $m=\frac{1}{2}(n-1)$. Also, in Example~\ref{ex:n<=5} we show that for $n=5$, $\dvec{P}_{5,\{1,2,4\}}$ requires the nSSP and $\dvec{P}_{5,\{1,2,5\}}$ allows but does not require the nSSP.
However, the next Theorem shows that if some initial segment of vertices has loops and the remaining vertices do not (i.e., $\dvec{P}_{5,[1]}$, $\dvec{P}_{5,[2]}$, $\dvec{P}_{5,[3]}$, $\dvec{P}_{5,[4]}$, or  $\dvec{P}_{5,[5]}$), then the double path still requires the nSSP. First, we illustrate the idea of its proof by an example.

\begin{example}
Let us again consider $G=\dvec{P}_{10,[3]}$,  \(A \in \mathcal{M}(G)\) and  \(X \in \overline{\mathcal{M}}(G^c)\) as in the Example~\ref{example:doublepath}. 
We already know from the example that several entries of $$X=
\begin{pmatrix}
 0 & 0 & \color{blue}x_{1,3} & \color{blue}x_{1,4} & \color{blue}x_{1,5} &\color{blue} x_{1,6} & \color{green}x_{1,7} & \color{orange}x_{1,8} &\color{red} x_{1,9} &\color{OliveGreen} x_{1,10} \\
 0 & 0 & 0 & \color{blue}x_{2,4} &\color{blue} x_{2,5} & \color{green}x_{2,6} &\color{orange} x_{2,7} &\color{red} x_{2,8} & \color{OliveGreen}x_{2,9} &\color{cyan} x_{2,10} \\
 \color{blue}x_{3,1} & 0 & 0 & 0 & \color{green}x_{3,5} &\color{magenta} x_{3,6} &\color{red} x_{3,7} &\color{magenta} x_{3,8} & \color{cyan}x_{3,9} & \color{magenta}x_{3,10} \\
 \color{blue}x_{4,1} & \color{blue}x_{4,2} & 0 & \color{green}x_{4,4} & 0 &\color{red} x_{4,6} &\color{magenta} x_{4,7} &\color{cyan} x_{4,8} & \color{magenta}x_{4,9} &\color{gray}  x_{4,10} \\
 \color{blue}x_{5,1} & \color{blue}x_{5,2} & \color{green}x_{5,3} & 0 & \color{red}x_{5,5} & 0 & \color{cyan}x_{5,7} &\color{magenta} x_{5,8} & \color{gray} x_{5,9} &\color{magenta} x_{5,10} \\
 \color{blue}x_{6,1} & \color{green}x_{6,2} & \color{magenta}x_{6,3} & \color{red}x_{6,4} & 0 &\color{cyan} x_{6,6} & 0 & \color{gray} x_{6,8} & \color{magenta}x_{6,9} & \color{gray} x_{6,10} \\
 \color{green}x_{7,1} & \color{orange}x_{7,2} &\color{red} x_{7,3} & \color{magenta}x_{7,4} &\color{cyan} x_{7,5} & 0 & \color{gray} x_{7,7} & 0 & \color{gray} x_{7,9} &\color{magenta} x_{7,10} \\
 \color{orange}x_{8,1} & \color{red}x_{8,2} &\color{magenta} x_{8,3} & \color{cyan}x_{8,4} &\color{magenta} x_{8,5} & \color{gray} x_{8,6} & 0 & \color{gray} x_{8,8} & 0 & \color{gray} x_{8,10} \\
 \color{red}x_{9,1} &\color{OliveGreen} x_{9,2} & \color{cyan}x_{9,3} & \color{magenta}x_{9,4} &\color{gray} x_{9,5} & \color{magenta}x_{9,6} & \color{gray} x_{9,7} & 0 & \color{gray} x_{9,9} & 0 \\
 \color{OliveGreen}x_{10,1} &\color{cyan} x_{10,2} & \color{magenta}x_{10,3} & \color{gray}x_{10,4} & \color{magenta}x_{10,5} & \color{gray} x_{10,6} & \color{magenta}x_{10,7} & \color{gray} x_{10,8} & 0 & \color{gray} x_{10,10}
\end{pmatrix}$$
must be equal to $0$, namely  all entries coloured in pink and some blue ones. 
However, we proved in Lemma~\ref{lem:pendent-looped-path-on-a-path}, that all blue entries are forced to be $0$.
In the proof of the Theorem~\ref{thm:double_paths-first-m-loops} these entries will correspond to the digraphs \(\color{magenta} G_1\) and \({\color{blue} G_2}\). 

All the remaining entries of \(X\) are here coloured according to their antidiagonals (i.e., by the value of \(i+j\)), assigning a unique colour to each corresponding supergraph of \(G\), 
 namely 
$\color{green} G_3$, $\color{orange} G_4$, $\color{red} G_5$, $\color{OliveGreen} G_6$, $\color{cyan} G_7$,
respectively. 
The last step of the proof shows that also all the remaining entries of $X$ (coloured gray), which correspond to the digraph
$\color{gray} G_9 = K_{10}^{\circ}$, are equal to zero. 
\end{example}

\begin{theorem}\label{thm:double_paths-first-m-loops}
 For every $m,n\in \bN$, $m\leq n$, the double path $\dvec{P}_{n,\loops}$ with $V(\dvec{P}_{n,\loops})=[n]$ and the loop assignment $\loops=[m]$ requires the nSSP.  
\end{theorem}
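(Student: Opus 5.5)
The plan is to fix an arbitrary $A=(a_{ij})\in\calM(\dvec{P}_{n,[m]})$ and $X=(x_{ij})\in\overline{\calM}(G^c)$ with $[A,X\trans]=O$, and to show $X=O$ by sweeping the antidiagonals $i+j=s$ in increasing order of $s$. Two cases are handled at once. If $m=n$, Lemma~\ref{lem:induced-path-all-looped} with $m=n$ already gives $X=O$. If $m>\frac12(n-1)$, Theorem~\ref{thm:double_paths-lots-of-loops} applies. So we may assume $m\le \frac12(n-1)$, i.e.\ $2m+1\le n$.

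\emph{Step 1 (initial zeros).} By Lemma~\ref{lem:pendent-looped-path-on-a-path} we have $x_{ij}=0$ whenever $i+j\le 2m+1$ (the blue entries, digraph $G_2$). Next we apply Lemma~\ref{lem:induced-path-loopless} to the relabelled path $n,n-1,\dots,m+1$, which is a loopless pendent double path attached to the looped vertex $m$. Its ``moreover'' part gives $x_{ij}=0$ for all $m\le i,j\le n$ with $|i-j|$ odd (the pink entries, $G_1$).

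\emph{Step 2 (induction on antidiagonals).} Assume all entries with $i+j\le s$ vanish, for some $s\ge 2m+1$. Since the index sums of the terms of~\eqref{eq:ij-of-commutator} lie in $\{s-1,s,s+1\}$ when $i+j=s$, equation $[A,X\trans]_{i,j}=0$ with $i+j=s$ reduces to the two-term relation
\[
a_{i,i+1}x_{j,i+1}=a_{j+1,j}x_{j+1,i},
\]
with the convention that a term is absent if an index exceeds $n$. This links consecutive entries $(j,i+1)$ and $(j+1,i)$ of antidiagonal $s+1$. The nonzero coefficients make these relations a chain: once one entry of the chain is $0$, all are.

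\emph{Step 3 (seeds on antidiagonal $s+1$).} We then need a zero seed on each antidiagonal $s+1$, using the symmetric versions from Remark~\ref{rem:symmetric-pattern} for the transposed entries. If $s+1$ is odd, every entry has $|i-j|$ odd, and a middle entry with both indices $\ge m$ is pink, hence $0$. If $s+1$ is even, the seed comes from the boundary. For $j=n$ there is no $x_{n+1,\cdot}$ term, so $[A,X\trans]_{s-n,n}=0$ becomes the monomial $a_{s-n,s-n+1}x_{n,s-n+1}=0$ of Lemma~\ref{lem:rule1}(\ref{lem:rule1out}). While $s+1\le n+1$, we instead use the row-$1$ equation, where the $x_{\cdot,0}$ term is missing. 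The entry $x_{kk}$ with $k\le m$ is also $0$ (it corresponds to a loop), but for $s+1\ge 2m+2$ this does not help.

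\emph{Main obstacle.} The delicate part is the even antidiagonals $2m+2\le s+1\le n+1$ whose chain does not touch row $n$. Here I would first try to combine the row-$1$ equation $[A,X\trans]_{1,s}$ with the chain. At $i=1$ there is no $x_{j,0}$ term, and the extra terms on antidiagonal $s$ vanish by induction, so this equation links $x_{s,2}$ and $x_{s+1,1}$. The chain therefore runs along the whole antidiagonal between the two boundaries. I would then check that the number of equations exceeds the number of unknowns by exactly one, so that the relations together with the symmetric relations from Remark~\ref{rem:symmetric-pattern} force all entries to vanish; the colour classes $G_3,\dots,G_7$ in the preceding example serve as a guide. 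Once the antidiagonals up to $n+1$ are cleared, the remaining gray region ($G_9$) is finished exactly as in the proof of Theorem~\ref{thm:double_paths-lots-of-loops}, using the row-$n$ monomials to eliminate rows and columns one at a time. This yields $X=O$, so $\dvec{P}_{n,[m]}$ requires the nSSP by Theorem~\ref{thm:rule1-nSSP}.
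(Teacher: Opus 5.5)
Your reductions ($m=n$, and $m>\frac12(n-1)$ via Theorem~\ref{thm:double_paths-lots-of-loops}) are correct and match the paper. So are Step~1, the two-term chain relation, the pink seeds on odd antidiagonals, and the row-$n$ monomial seeds for antidiagonals $S\ge n+2$. The gap is exactly the step you call the main obstacle, and your proposed remedy does not close it.

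Take an even $S$ with $2m+2\le S\le n+1$. This range is nonempty whenever $2m+1\le n$, for example $n=5$, $m=2$, $S=6$.
\begin{itemize}
\item The $S-1$ entries $x_{p,S-p}$, $1\le p\le S-1$, are all free. None lies in the pattern of $G$ (the diagonal vertex $S/2\ge m+1$ is unlooped), and none has odd $|i-j|$, so none is pink.
\item The equations $[A,X^{\top}]_{i,j}=0$ with $i+j=S-1$ give only $S-2$ two-term links. There is no boundary monomial: both indices are at most $n-1$, and at $i=1$ the absent term $x_{j,0}$ would lie on antidiagonal $S-2$ anyway.
\item Remark~\ref{rem:symmetric-pattern} adds nothing, because the transposed links are already among these $S-2$ equations.
\end{itemize}
So the chain leaves a one-parameter family: there is one equation too few, not one too many.

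Your row-$1$ equation does not repair this. The equation $[A,X^{\top}]_{1,S-1}=0$ actually reads $a_{11}x_{S-1,1}+a_{12}x_{S-1,2}-a_{S,S-1}x_{S,1}=0$; you dropped the first term. The last two entries lie on antidiagonal $S+1$, which is not yet known to vanish when $m\ge 2$.

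The missing idea is that the loop at the last looped vertex $m$ enters through the coefficient $a_{mm}$, not through the zero entry $x_{mm}$. Use the equation $[A,X^{\top}]_{S-m,m}=0$, which lies on antidiagonal $S$ itself.
\begin{itemize}
\item Since $S-m\ge m+2$ is unlooped, its diagonal part is $-a_{mm}x_{m,S-m}$.
\item The entries $x_{m,S-m-1}$ and $x_{m-1,S-m}$ lie on antidiagonal $S-1$, so they vanish by the induction hypothesis.
\item The entries $x_{m,S-m+1}$ and $x_{m+1,S-m}$ lie on antidiagonal $S+1$, but both indices are $\ge m$ and the difference is odd. So they are already zero by Lemma~\ref{lem:induced-path-loopless}, independently of the sweep.
\end{itemize}
Hence $a_{mm}x_{m,S-m}=0$, which gives the seed $x_{m,S-m}=0$, and your chain then clears antidiagonal $S$. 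This is precisely the step $0=[A,X^{\top}]_{m+2s,m}=-a_{m,m}x_{m,m+2s}$ in the paper's proof. With it, the rest of your sweep goes through.

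For $m=1$ your row-$1$ equation does happen to give the seed, since then $x_{S-1,2}$ and $x_{S,1}$ are pink; that is why the idea looked promising. For general $m$ you must use row or column $m$ instead of row $1$.
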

\begin{proof}
Let $G=\dvec{P}_{n,\loops}$, $A \in \mathcal{M}(G)$  and $X=(x_{ij}) \in \overline{\mathcal{M}}(G^c)$ be arbitrary, such that $[A,X\trans]=0$. By Lemma~\ref{lem:induced-path-loopless} we have $x_{i,j}=0$ for all $m\leq i, j\leq n$ such that $|i-j|=2s+1$, where $1\leq 2s+1 \leq n-m$ and particularly \begin{equation}\label{eq:odd-distance}
    x_{m,m+2s+1}=0,
\end{equation}
so $X \in \overline{\mathcal{M}}(G_1^c)$ for $G_1:=G+\{\{i,j\}:m\leq i,j\leq n, |i-j|=2s+1, 1\leq 2s+1 \leq n-m\}.$
By Lemma~\ref{lem:pendent-looped-path-on-a-path} it follows that $x_{ij}=0$ for all $i+j\leq 2m+1$, or equivalently $X\in \overline{\mathcal{M}}(G_{2}^c)$ for $G_{2}:=G_1+\{\{j,m+k\} \colon k\leq \min\{m+1,n-m\},j\in [m-k+1]\}.$

Define
\begin{align*}
G_{t}&:=G_{1}+\{\{i,j\}:i+j\leq 2m+t-1\}
\end{align*}
for $t$, $3\leq t\leq n-m+1$.
Note that if $X\in \overline{\mathcal{M}}(G_{t}^c)$, then for all $i$ and $j$ such that $i+j=2m+t-1,$ the equation \begin{equation}\label{eq:iplusj}[A,X\trans]_{i,j}=a_{i,i+1}x_{j,i+1}-a_{j+1,j}x_{j+1,i}=0\end{equation} holds, because $N_G^+[i]\cap N_{G_{t}}^+[j]^c=\{i+1\}$ and $N^-_G[j]\cap N^-_{G_{t}}[i]^c=\{j+1\}$. Notice that variable $x_{j+1,i}$ appears in two consecutive equations $[A,X\trans]_{i,j}=0$ and $[A,X\trans]_{i-1,j+1}=0$.

Now we will prove the following implications inductively:
\begin{equation}
    \label{eq:implications}
X \in \overline{\mathcal{M}}(G_{2s}^c) \implies X \in 
\overline{\mathcal{M}}(G_{2s+1}^c) \implies X\in \overline{\mathcal{M}}(G_{2s+2}^c)
\end{equation}
for $1\leq s \leq \frac{1}{2}(n-m-1)$.

Let $s=1$.
By (\ref{eq:odd-distance}) we have $x_{m,m+3}=0$, and so from $$0=[A,X\trans]_{m+2,m}=-a_{m,m}x_{m,m+2}+a_{m+2,m+3}x_{m,m+3}$$
  it follows $x_{m,m+2}=0.$  Since variable $x_{m,m+2}=0$ appears by~\eqref{eq:iplusj} in the equation $[A,X\trans]_{m+2,m-1}=0$ as well, we conclude $x_{m-1,m+3}=0$ and consecutively $x_{i,j}=0$ for all $i, j$ such that $i+j=2m+2$. So, $X\in \overline {\mathcal{M}}(G_{3}^c)$. In the same fashion, while observing the equations~\eqref{eq:iplusj} for $i+j=2m+2$, it can be proven that $X \in \overline {\mathcal{M}}(G_{4}^c)$.

    Suppose now $X \in \overline{\mathcal{M}}(G_{2s}^c)$ for some $s$, $1\leq s \leq \frac{1}{2}(n-m-1)$.  By~\eqref{eq:odd-distance} and~\eqref{eq:iplusj} it follows that  $$0=[A,X\trans]_{m+2s,m}=-a_{m,m}x_{m,m+2s}+a_{m+2s,m+2s+1}x_{m,m+2s+1}=-a_{m,m}x_{m,m+2s},$$ and so $x_{m,m+2s}=0$.
 For $i, j$, such that $i+j=2m+2s-1$ the equation (\ref{eq:iplusj}) holds.
 Since variable $x_{m,m+2s}=0$ appears in the equation $[A,X\trans]_{m+2s,m-1}=0$, we  conclude recursively as before that $x_{i,j}=0$ for all $i, j$ such that $i+j=2m+2s$. So, $X \in \overline{\mathcal{M}}(G_{2s+1}^c)$.

 To prove the second implication it is enough to observe the equations (\ref{eq:iplusj}) for $i+j=2m+2s$. Equation $[A,X\trans]_{m+2s+1,m-1}=a_{m+2s+1,m+2s+2}x_{m-1,m+2s+2}=0$ will imply $x_{m-1,m+2s+2}=0.$ As before, two consecutive equations $[A,X\trans]_{i,j}=0$ for $i+j=2m+2s$ have a variable in common, so $x_{m-1,m+2s+2}=0$ will force $x_{i,j}=0$ for all $i+j=2m+2s+1,$ so $X\in\overline{\mathcal{M}}(G_{2s+2}^c).$
 
Observe that $X\in\overline{\mathcal{M}}(G_{n-m+1}^c)$. If  $2s+1>n-m$, then  $0=[A,X\trans]_{n,2m+2s-1-n}=-a_{m+1,m}x_{m+1,n}$. As above, observing the consecutive equations $[A,X\trans]_{i,j}=0$ and $[A,X\trans]_{i-1,j+1}=0$, for $i+j=2m+2s-1$, will imply $x_{i,j}=0$ for all $i+j=2m+2s$. Also, the equations (\ref{eq:iplusj}) for $i+j=2m+2s+1$ will imply $x_{i,j}=0$ for $i+j=2m+2s+2.$ Repeating the same process for all $s>1$ such that $2s+1>n-m$ will force all the other entries in $X$ to be zero, so $X=O$ and $G$ requires the nSSP.
\end{proof}

Now it follows that in the special case when $G$ in Lemma~\ref{lem:noloops-all-loops-nssp}(\ref{all-loops}) is a double path with all the loops, it does not only allow but it requires the nSSP:

\begin{corollary}
   For every $n$, the double path $P_{n,[n]}$ requires the nSSP. 
\end{corollary}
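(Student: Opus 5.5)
This corollary is the special case $m=n$ of Theorem~\ref{thm:double_paths-first-m-loops}: with $\loops=[n]$ the loop assignment is exactly an initial segment $[m]$ with $m=n$. So I would simply invoke that theorem. Since its proof leans on Lemma~\ref{lem:induced-path-loopless} and Lemma~\ref{lem:pendent-looped-path-on-a-path}, both of which assume $m<n$, I would also record a short self-contained argument for $m=n$.

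For that argument, take $G=\dvec{P}_{n,[n]}$, $A\in\calM(G)$ and $X\in\overline{\calM}(G^c)$ with $[A,X\trans]=O$. Apply Lemma~\ref{lem:induced-path-all-looped} with $m=n$. Its hypotheses hold: the induced subgraph on $[n]$ is $\dvec{P}_{n,[n]}$, and $N_G[i]\subseteq[n]$ trivially for all $i$. The lemma then gives $x_{ij}=0$ for all $1\leq i,j\leq n$, so $X=O$. Hence every $A\in\calM(G)$ has the nSSP, and $G$ requires it.

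Alternatively, one can appeal to Theorem~\ref{thm:double_paths-lots-of-loops}. Here the maximal $m$ with $[m]\subseteq\loops$ is $m=n$, and $n>\frac12(n-1)$ for every $n\geq1$. The only thing to check is that its proof, through Lemma~\ref{lem:pendent-looped-path-on-a-path}, does not actually need $m<n$. To avoid that issue, I would use Lemma~\ref{lem:induced-path-all-looped} directly.

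The only potential obstacle is the degenerate case $n=1$. There $A=(a_{11})$ with $a_{11}\neq0$, and $A\circ X=O$ forces $X=O$ immediately.
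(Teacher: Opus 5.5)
Your proposal is correct and follows the paper, which presents this corollary as the special case $m=n$ of Theorem~\ref{thm:double_paths-first-m-loops}. Your added direct argument is also valid and arguably cleaner: Lemma~\ref{lem:induced-path-all-looped} permits $m=n$ and then gives $x_{ij}=0$ for all $i,j\in[n]$, i.e.\ $X=O$. This sidesteps the fact that the written proof of that theorem invokes Lemmas~\ref{lem:induced-path-loopless} and~\ref{lem:pendent-looped-path-on-a-path}, whose hypotheses require $m<n$.
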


Note that the number of loops of a double path $\dvec{P}_{n,\loops}$ does not reassure that it would require the nSSP. In Theorem~\ref{thm:double_paths-first-m-loops} we showed that for any $k\in [n]$ there exists a double path with exactly $k$ loops that requires the nSSP and in Theorem~\ref{thm:double_paths-lots-of-loops} we showed that a certain number and position of loops guarantee the nSSP as well. In the next example we show there exist double paths with one loop that require, not require or not even allow the nSSP. Also, we construct a family of double paths with all but one loop that does not require the nSSP.

\begin{theorem}\label{thm:double-path-vtx-2}
Let $n\in \bN$ and let $\dvec{P}_{n,\loops}$ be a double path with $V(\dvec{P}_{n,\loops})=[n]$.
\begin{enumerate}[(A)]
\item \label{1:loops=1n+and+c}If $\loops=\{1,n\}$, then $\dvec{P}_{n,\loops}$ does not require, but allows the nSSP, and
$\dvec{P}_{n,\loops^c}$ does not require the nSSP.

\item \label{2:loops=2,even-case}If  $n$ is even, then $\dvec{P}_{n,\{2\}}$ requires the nSSP. 
 \item\label{3:n-odd-cases} Let $n\geq 3$ be an odd integer.
 \begin{enumerate}
      \item\label{item:middle-loop} If $\loops=\{\frac{n+1}{2}\}$,   
      then $\dvec{P}_{n,\loops}$ does not require the nSSP.
      \item\label{item:without-middle-loop} If $\loops=[n]\setminus\{\frac{n+1}{2}\}$, then $\dvec{P}_{n,\loops}$ does not require, but allows the nSSP. 
   \item\label{item:even-loops} If $\loops\subseteq \{2i \colon i\in \left[\frac{n-1}{2}\right]\}$, then $\dvec{P}_{n,\loops}$ does not allow the nSSP.
   
\end{enumerate}
 
\end{enumerate}

\end{theorem}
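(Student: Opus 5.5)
The unifying tool for the negative parts is a \emph{rank-one commuting perturbation}. Suppose $A\in\calM(\dvec{P}_{n,\loops})$, $\lambda\in\bR$, $A{\bf u}=\lambda{\bf u}$ and ${\bf v}\trans A=\lambda{\bf v}\trans$. Then $X\trans:={\bf u}{\bf v}\trans$ satisfies $AX\trans=\lambda{\bf u}{\bf v}\trans=X\trans A$. So $A$ fails the nSSP as soon as $X={\bf v}{\bf u}\trans\neq O$ and $v_iu_j=0$ for every arc $(i,j)$. The positive statements will instead come from the sweeping arguments with Lemma~\ref{lem:rule1}, together with Theorem~\ref{thm:double_paths-first-m-loops} and the Superpattern Lemma~\ref{lem:superpattern-lemma}.

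\emph{Part (C)(c), which I treat first because it works for every $A$.} Let $O$ be the set of odd indices, so $|O|=\frac{n+1}{2}$, and let $\lambda=0$. No vertex of $O$ is looped and no two vertices of $O$ are adjacent, so the $O\times O$ block of any $A\in\calM(\dvec{P}_{n,\loops})$ is zero. Look for ${\bf u}$ supported on $O$. Then the odd rows of $A{\bf u}$ vanish automatically. The even rows give $\frac{n-1}{2}$ equations in $\frac{n+1}{2}$ unknowns, so a nonzero ${\bf u}$ with $A{\bf u}={\bf 0}$ exists. The same argument gives a nonzero ${\bf v}$ supported on $O$ with ${\bf v}\trans A={\bf 0}$. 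Then $X={\bf v}{\bf u}\trans$ is nonzero and supported on $O\times O$, where $A$ vanishes. Hence no $A$ has the nSSP.

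\emph{The other "does not require" claims: (A), (C)(a) and (C)(b).} It suffices to exhibit one matrix $A$ in each case. For (C)(a): if $\frac{n+1}{2}$ is even, the claim follows from (C)(c). Otherwise I will build a centrosymmetric $A$ whose kernel vectors ${\bf u},{\bf v}$, supported on $O$, also vanish at the middle vertex. For this I choose the off-diagonal weights so that the even-row equations force $u_{(n+1)/2}=v_{(n+1)/2}=0$ while the remaining entries are nonzero; the middle loop entry is then harmless.

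For (A) with $n$ odd, the $3\times 3$ example generalises directly: take $A$ centrosymmetric with $JAJ=A$, where $J$ is the anti-identity, and put $X=J$. The antidiagonal meets the pattern only in the middle position $(\frac{n+1}{2},\frac{n+1}{2})$, which is not looped. The same $X=J$ should also handle the complementary assignment when $n$ is even. For (A) with $n$ even, and for $\loops^c$ with $n$ odd, the antidiagonal hits arcs. There I will instead use the eigenvector trick: pick a centrosymmetric $A$ with eigenvalue $\lambda$ whose symmetric and antisymmetric eigenvectors have interleaving zero patterns, for instance by prescribing vectors and solving for the tridiagonal entries. Then I take $X={\bf v}{\bf u}\trans$ with the supports arranged so that $v_iu_j=0$ on all tridiagonal positions and on the looped diagonal positions. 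The same construction with $\lambda$ equal to the common diagonal value handles (C)(b). I expect these explicit constructions, in particular arranging the zero patterns of eigenvectors compatibly with the arcs, to be the main obstacle; I would first solve the small cases $n=4,5,6$ symbolically and then extrapolate the pattern.

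\emph{"Allows" parts.} In (A), $\{1,n\}\supseteq\{1\}$, and in (C)(b), $[n]\setminus\{\frac{n+1}{2}\}\supseteq[\frac{n-1}{2}]$. In both cases $\dvec{P}_{n,\loops}$ is a superpattern of some $\dvec{P}_{n,[m]}$, which requires the nSSP by Theorem~\ref{thm:double_paths-first-m-loops}. Lemma~\ref{lem:superpattern-lemma} then gives a matrix with the nSSP.

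\emph{Part (B).} Reverse the labelling so that the loop sits at $n-1$ and the loopless segment is $1,\dots,n-2$. Apply Lemma~\ref{lem:induced-path-loopless} with $m=n-2$ and its "moreover" part (vertex $n-1$ is looped) to get all odd-distance zeros among $[n-1]$. Then mimic the proof of Theorem~\ref{thm:double_paths-first-m-loops}: run the antidiagonal sweep with equations~\eqref{eq:iplusj}, seeded at the looped vertex. The parity of $n$ is used to check that every even-distance antidiagonal is reached by a monomial seed of the form $-a_{\ell,\ell}x_{\ell,\ell+2s}$, including at the pendant vertex $n$. This closes the induction and gives $X=O$.
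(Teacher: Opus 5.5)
\textbf{Verdict: there is a genuine gap.} Several of your steps are sound. Your argument for (C)(c) is correct and cleaner than the paper's induction; the paper's $3\times 3$ witness is exactly your rank-one $\mathbf{v}\mathbf{u}\trans$. The ``allows'' parts via Lemma~\ref{lem:superpattern-lemma} are fine, and so is $X=J$ for $\loops=\{1,n\}$ with $n$ odd.

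\textbf{The rank-one eigenvector trick only works in case (C)(c).} Let $A$ be irreducible tridiagonal with $A\mathbf{u}=\lambda\mathbf{u}$ and $\mathbf{v}\trans A=\lambda\mathbf{v}\trans$. The three-term recursion forces $u_1,v_1,u_n,v_n\neq 0$, so $X_{11}=v_1u_1\neq 0$. This is fatal whenever $1\in\loops$, i.e.\ in (A) for every $n$ and in (C)(b). More generally, $X_{12}=X_{21}=0$ forces $u_2=v_2=0$. The first two eigen-equations then give $\lambda=a_{11}$ and $u_3,v_3\neq 0$. Inductively, the supports of $\mathbf{u},\mathbf{v}$ are exactly the odd indices, $n$ must be odd, and $X_{ii}\neq 0$ for every odd $i$. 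So such an $X$ exists only when $n$ is odd and no odd vertex is looped, which is precisely (C)(c). Consequences for the other cases:
\begin{itemize}
\item Your plan for (C)(a) with $\frac{n+1}{2}$ odd is impossible for any choice of weights. The even-row equations $a_{2k,2k-1}u_{2k-1}+a_{2k,2k+1}u_{2k+1}=0$ make every odd entry a nonzero multiple of $u_1$, so $u_{(n+1)/2}=0$ forces $\mathbf{u}=\mathbf{0}$.
\item The trick also fails for $\dvec{P}_{n,\{1,n\}^c}$ with $n\ge 4$, because vertex $3$ is looped.
\item Your claim that $X=J$ handles $\{1,n\}^c$ for even $n$ is false. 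For even $n$ the antidiagonal contains the arcs $(\frac n2,\frac n2+1)$ and $(\frac n2+1,\frac n2)$.
\end{itemize}

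\textbf{The missing witnesses have higher rank.} These are the constructions the paper uses, with one correction for (C)(b).
\begin{itemize}
\item For (A): the adjacency matrix $A$ of $\dvec{P}_{n,\{1,n\}}$ has all row and column sums equal to $2$, so it commutes with the all-ones matrix $E$. Then $X=E-A$ works for every $n\ge 3$. Setting $B=I-A\in\calM(\dvec{P}_{n,\{1,n\}^c})$ and $Y=I-X$ handles the complement.
\item For (C)(a): the paper's block matrix is $X=J-I$. It commutes with the centrosymmetric adjacency matrix. Also $A\circ X=O$, since $J-I$ vanishes at the middle diagonal entry and $J$ misses the rest of the pattern for odd $n$.
\item For (C)(b): your own $J$-argument suffices. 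The set $[n]\setminus\{\frac{n+1}{2}\}$ is invariant under $i\mapsto n+1-i$ and leaves the middle vertex unlooped. In the paper's complement form with $B=I-A$, the correct witness is $I+X=J$, not $I-X$.
\end{itemize}

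\textbf{(B) is only sketched, and the seeding you describe fails as stated.} Work in the original labelling. After the odd-distance zeros, $[A,X\trans]_{2+2s,2}=0$ reads $-a_{12}x_{1,2+2s}-a_{22}x_{2,2+2s}=0$. This is not a monomial until the row of the unlooped pendant vertex $1$ has been cleared. The paper proceeds as follows.
\begin{enumerate}
\item It kills $x_{1,k}$ for even $k$. For odd $k$, the equation $[A,X\trans]_{k,1}=0$ reduces to $a_{k,k-1}x_{1,k-1}+a_{k,k+1}x_{1,k+1}=0$, and this chain is seeded by $x_{12}=0$.
\item It kills $x_{2,k}$ for even $k$ using the loop at vertex $2$.
\item It kills $x_{1,k}$ for odd $k$, including $x_{11}$. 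It uses the same equations for even $k$, seeded at $k=n$ where $a_{n,n+1}=0$. This is exactly where the parity of $n$ enters.
\end{enumerate}
Once rows and columns $1,2$ of $X$ vanish, the problem reduces to $\dvec{P}_{n-1,\{1\}}$. That double path requires the nSSP by Theorem~\ref{thm:double_paths-first-m-loops}.
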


\begin{proof}
\begin{enumerate}[(A)]
\item  It is straightforward to check that the adjacency matrix $A$ of $\dvec{P}_{n,\{1,n\}}$ and the matrix $X=E-A$, where $E$ is the matrix of all ones, commute. Since they are both symmetric and $A\circ X=O$, it follows that $A$ does not have the nSSP. By Theorem~\ref{thm:double_paths-first-m-loops} and Lemma~\ref{lem:superpattern-lemma} it follows that  $\dvec{P}_{n,\{1,n\}}$ allows, but does not require the nSSP.

Note that $A$ and $X$ are both matrices in $\{0,1\}^{n\times n}$, and so for $B:=I-A\in  {\mathcal M}(\dvec{P}_{n,\{1,n\}^c})$ and $Y:=I-X$ we have $B\circ Y=[B,Y]=O$. Therefore, $(\dvec{P}_{n,\{1,n\}^c})$ does not require the nSSP.

\item  Let $n$ be even. 
 Let $A \in \mathcal{M}(G)$ and $X \in \overline{\mathcal{M}}(G^c)$ such that $[A,X\trans]=O$. By Lemma~\ref{lem:induced-path-loopless} on the induced subgraph on vertices $\{2,\ldots,n\}$ we conclude that $x_{ij}=0$ for all $2\leq i, j \leq n$ such that $|i-j|$ is odd. Let us denote $G_1=G+\{\{i,j\} : 2\leq i, j \leq n, |i-j| \ \text{odd}\}$.

Now we prove that $X \in \overline{\mathcal{M}}(G_2^c)$ where $G_2=G_1+\{\{1,k\} : k \ \text {even}\}$.   First observe that for all odd $k$ we have \begin{equation}\label{eq:1st-column}
0=[A,X\trans]_{k,1}=a_{k,k-1}x_{1,k-1}+a_{k,k+1}x_{1,k+1}\end{equation}  since $N_G^+[k]\cap N_{G_1}^+[1]^c=\{k-1, k+1\}$ and $N_G^-[1]\cap N_{G_1}^-[k]^c=\emptyset$. First, for $k=3$ we use that $x_{12}=0$ and  $N_G^+[3]\cap N_{G_1}^+[1]^c=\{4\}$ and $N_G^-[1]\cap N_{G_1}^-[3]^c=\emptyset$, therefore  $x_{14}=x_{41}=0$ by Lemma~\ref{lem:rule1} and Remark~\ref{rem:symmetric-pattern}.   For all odd $k\geq 3$,  we use inductively that $x_{1,k-1}=x_{k-1,1}=0$ and therefore by~\eqref{eq:1st-column} we have $x_{1,k+1}=x_{k+1,1}$. Hence $X \in \overline{\mathcal{M}}(G_2^c)$.
   
   Now, for odd $k\geq 3$ we have $N_G^+[k+1]\cap N_{G_2}^+[2]^c=\emptyset$ and $N_G^-[2]\cap N_{G_2}^-[k+1]^c=\{2\}$, and so it follows that $x_{2,k+1}=x_{k+1,2}=0$ by Lemma~\ref{lem:rule1} and Remark~\ref{rem:symmetric-pattern}. This implies $X \in \overline{\mathcal{M}}(G_3^c)$ where $ G_3=G_2+\{\{2,k\}: k \ \text{even}\}$. 
   
   Similarly as in~\eqref{eq:1st-column} we now observe that $0=[A,X\trans]_{k,1}=a_{k,k-1}x_{1,k-1}+a_{k,k+1}x_{1,k+1}$ also for all even $k$. Since $n$ is even, let $k=n$ and since $a_{k,k+1}=0$, it follows that $x_{1,n-1}=0$. Inductively for $k<n$ it follows that $x_{1,k-1}=0$ for all even $k$. By symmetry, $X \in \overline{\mathcal{M}}(G_4^c)$ where $G_4=G_3+\{\{1,k\} : k \ \text {odd}\}$. 

   At this point $x_{1k}=x_{2k}=x_{k1}=x_{k2}=0$ for all $k\in [n]$. So, $$A=\begin{pmatrix} 
0 & \alpha \, {\bf e}_1\trans \\
\beta\, {\bf e}_1 & A_1
\end{pmatrix} \in {\mathcal M}(G), \ X=\begin{pmatrix}
   0 & \textbf{0}  \\
\textbf{0}\trans & X_1
\end{pmatrix}\in \overline{{\mathcal M}}(G^c),$$
where $\alpha, \beta\in \bR\setminus \{0\}$, $A_1=(a_{ij})\in \mathcal{M}(H)$ and $X_1 \in \overline{\mathcal{M}}(H^c) $ for $H=\dvec{P}_{n-1,\{1\}}$.
\\
Since $O=[A,X\trans]=\begin{pmatrix}
    0 & - \alpha {\bf e}_1\trans X_1\trans \\
    -\beta X_1\trans {\bf e}_1 &[A_1,X_1\trans]
\end{pmatrix}$,
and since $H$ requires the nSSP by the Theorem~\ref{thm:double_paths-first-m-loops}, it follows that $X_1=0$ and so  $X=0$.

\item  Assume $n$ is odd.

(a) Let $k=\frac{n+1}{2}$ and let $A$ be the adjacency matrix of the graph. Then 
\begin{equation}\label{eq:AX-middle-looped}A=\begin{pmatrix}
        B&{\bf e}_{k}&O\\
        {\bf e}_{k}\trans&1&{\bf e}_{1}\trans\\
        O&{\bf e}_{1}&B
    \end{pmatrix} \text{ and } X:=\begin{pmatrix}
        -I & {\bf 0} & Y\\
        {\bf 0} & 0 & {\bf 0}\\
        Y & {\bf 0} & -I
    \end{pmatrix},
    \end{equation}
    where $B={\rm Adj}(P_k)$ is an adjacency matrix of a loopless path, and $Y=(y_{ij})\in \bR^{k\times k}$ is defined as $$y_{ij}=\begin{cases}
        1, & i+j=k+1,\\
        0, & \text{otherwise}.
    \end{cases}$$
    It is easy to see that matrices $A$ and $X\trans=X$ commute and $A\circ X=O$. Hence $A$ does not have the nSSP.

    (b) 
 Let $\loops=[n]\setminus\{\frac{n+1}{2}\}$ and $A$ and $X$ as in~\eqref{eq:AX-middle-looped}. Observe that $B:=I-A\in \calM(\dvec{P}_{n,\loops})$ and $Y=I-X\in\overline{\calM}(\dvec{P}_{n,\loops}^c)$. Since $B\circ Y=O$ and $[B,Y]=[A,X]=O$, it follows that $B$ does not have the nSSP. In Theorem~\ref{thm:double_paths-first-m-loops} we have shown that $\dvec{P}_{n,[1]}$ requires the nSSP and so $\dvec{P}_{n,\loops}$ allows the nSSP by Lemma~\ref{lem:superpattern-lemma}, but does not require it.

 (c) For $n=3$ it is trivial to check that $A=\left(
\begin{array}{ccc}
 0 & a_{1,2} & 0 \\
 a_{2,1} & a_{2,2} & a_{2,3} \\
 0 & a_{3,2} & 0 \\
\end{array}
\right)$ and $X=\left(
\begin{array}{ccc}
 a_{2,3} a_{3,2} & 0 & -a_{2,1}a_{3,2} \\
 0 & 0 & 0 \\
 -a_{1,2}a_{2,3} & 0 & a_{1,2} a_{2,1} \\
\end{array}
\right)$ satisfy $[A,X\trans]=0$.
Let $G_1=\dvec{P}_{n,\loops}, G=\dvec{P}_{n+2,{\mathcal K}},$ where $\loops\subseteq {\mathcal K}\subseteq \loops\cup\{(n+1,n+1)\}$ and take arbitrary $A \in \mathcal{M}(G)$.
We will inductively show that for every matrix
$$A=\begin{pmatrix} 
A_1 & \alpha \, {\bf e}_n & {\bf 0}\\
\beta\, {\bf e}_n\trans &  \epsilon& \gamma\\
 {\bf 0}\trans & \delta & 0
\end{pmatrix} \in {\mathcal M}(G)$$
where $A_1=(a_{ij})\in \mathcal{M}(G_1)$ and $\alpha, \beta,\gamma,\delta \in \bR\setminus \{0\}$, $\epsilon \in \mathbb{R}$ there exists a matrix $$X=\begin{pmatrix}
    X_1&\bf{0}&{\bf w}\\
    {\bf 0} & 0 & 0\\
    {\bf v}\trans & 0 & \psi\\
\end{pmatrix}\in \overline{{\mathcal M}}(G^c),$$ such that $[A,X\trans]=O$ and so that all even-indexed rows and all even-indexed columns of $X_1$ are equal to ${\bf 0}$.

Suppose the statement is true for some odd $n\geq 3$ and that $A$ and $X$ are as above. By inductive hypothesis, $A_1$ does not have the nSSP, therefore there exists $X_1=(x_{ij})\in \overline{{\mathcal M}}(G_1^c)$, such that $[A_1,X_1\trans]=O$ and $x_{ij}=0$ if $i$ is even or if $j$ is even. Therefore for all $i\in [\frac{n-1}{2}]$ and all $j\in [\frac{n+1}{2}]$ we have
\begin{align*}
0&=[A_1,X_1\trans]_{2i,2j-1}= a_{2i,2i-1}x_{2j-1,2i-1}+a_{2i,2i+1}x_{2j-1,2i+1} \; \text{ and }\\
0&=[A_1,X_1\trans]_{2j-1,2i}= -a_{2i-1,2i}x_{2i-1,2j-1}-a_{2i+1,2i}x_{2i+1,2j-1}.
\end{align*}
Let us define  $\psi=\frac{\alpha \beta}{\gamma \delta}x_{nn}$ and
$${\bf w}_i:=\begin{cases}
    -\frac{\beta}{\gamma}x_{in}, & i \text{ odd},\\
    0, & i \text{ even},\\
\end{cases} \quad \text{ and }\quad
{\bf v}_i:=\begin{cases}
    -\frac{\alpha}{\delta}x_{ni}, & i  \text{ odd},\\
    0, & i \text{ even}.\\
\end{cases}$$
Now, it is a straightforward computation that 
$$[A,X\trans]=\begin{pmatrix}
    [A_1,X_1\trans] & - \alpha X_1\trans {\bf e}_n-\delta {\bf v} & A_1 {\bf v}\\
    \beta {\bf e}_n\trans X_1\trans +\gamma{\bf w}\trans &0&\beta {\bf e}_n\trans {\bf v}+\gamma \psi\\
    -{\bf w}\trans A_1 & -\delta\psi -\alpha {\bf w}\trans {\bf e}_n   &0
\end{pmatrix}=O,$$
and so we proved that $ \dvec{P}_{n,\loops}$ does not allow the nSSP for all odd $n$.\qedhere
\end{enumerate}
\end{proof}

We finish the paper with several examples that illustrate the difficulty of Problem~\ref{problem-6.5}.

\begin{example}
  Let $\loops=\{1,n\}^c$. Note that for an even $n$, Theorem~\ref{thm:double-path-vtx-2}(\ref{1:loops=1n+and+c}) and (\ref{2:loops=2,even-case}) and Lemma~\ref{lem:superpattern-lemma} imply that $\dvec{P}_{n,\loops}$ allows, but does not require the nSSP.
  
  However, if $n$ is odd, then $\dvec{P}_{3,\loops}$ does not allow the nSSP by Theorem~\ref{thm:double-path-vtx-2}(\ref{item:even-loops}). We will show in Example~\ref{ex:central_being_looped} that $\dvec{P}_{5,\{3\}}$ allows the nSSP, therefore by Lemma~\ref{lem:superpattern-lemma} and Theorem~\ref{thm:double-path-vtx-2}(\ref{1:loops=1n+and+c}) it follows that $\dvec{P}_{5,\loops}$ allows but does not require the nSSP. A similar argument shows that the same is true for $\dvec{P}_{7,\loops}$, since it can be shown that $$A=\left(
\begin{array}{ccccccc}
 0 & 1 & 0 & 0 & 0 & 0 & 0 \\
 1 & 1 & 1 & 0 & 0 & 0 & 0 \\
 0 & 1 & 1 & 1 & 0 & 0 & 0 \\
 0 & 0 & 2 & 2 & 2 & 0 & 0 \\
 0 & 0 & 0 & 1 & 1 & 1 & 0 \\
 0 & 0 & 0 & 0 & 1 & 1 & 1 \\
 0 & 0 & 0 & 0 & 0 & 2 & 0 \\
\end{array}
\right)$$ has the nSSP.
\end{example}

The next two examples consider cases when $|\loops|=1$.

\begin{example}
    Note that Theorem~\ref{thm:double-path-vtx-2}(\ref{2:loops=2,even-case}) is not true even for an even $n$ if we move the loop from vertex $2$ to another position. Namely, if $$A=
    \left(
\begin{array}{cccccc}
 0 & 1 & 0 & 0 & 0 & 0 \\
 1 & 0 & -1 & 0 & 0 & 0 \\
 0 & 1 & 1 & 1 & 0 & 0 \\
 0 & 0 & 1 & 0 & 1 & 0 \\
 0 & 0 & 0 & -1 & 0 & 1 \\
 0 & 0 & 0 & 0 & 2 & 0 \\
\end{array}
\right)
    \text{ and }
    X=\left(\begin{array}{cccccc}
 -1 & 0 & 1 & -1 & 2 & -2 \\
 0 & -2 & 0 & 3 & -1 & 4 \\
 -1 & 0 & 0 & 0 & -1 & 0 \\
 1 & -3 & 0 & 2 & 0 & 2 \\
 2 & -1 & 1 & 0 & -1 & 0 \\
 -1 & 2 & 0 & -1 & 0 & -2 \\
\end{array}
\right),$$
    it is easy to see that $A\circ X=[A,X\trans]=O$ and therefore $A$ does not have the nSSP. It is straightforward to verify that the adjacency matrix of $\dvec{P}_{6,\{3\}}$ has the nSSP. Consequently, $\dvec{P}_{6,\{3\}}$ allows the nSSP, but does not require it.
\end{example}

\begin{example}\label{ex:central_being_looped}
Observe that for an odd $n\geq 3$ the double path $\dvec{P}_{n,\{\frac{n+1}{2}\}}$ does not require the nSSP by Theorem~\ref{thm:double-path-vtx-2}(\ref{item:middle-loop}) and in the case $n=-1\mod 4$, then $\frac{n+1}{2}$ is an even number and so by Theorem~\ref{thm:double-path-vtx-2}(\ref{item:even-loops}) it does not even allow the nSSP. 

However, in the case $n=1\mod 4$, $\dvec{P}_{n,\{\frac{n+1}{2}\}}$ can allow the nSSP.  For example, for the matrix
$$A=\begin{pmatrix}
 0 & 1 & 0 & 0 & 0 \\
 1 & 0 & -1 & 0 & 0 \\
 0 & 1 & 1 & 1 & 0 \\
 0 & 0 & 1 & 0 & -1 \\
 0 & 0 & 0 & 1 & 0 \\
\end{pmatrix}\in {\mathcal M}(\dvec{P}_{5,\{3\}})$$
let $X\in \overline{\mathcal M}(\dvec{P}_{5,\{3\}}^c)$ satisfy $[A, X\trans]=O$. Then the corresponding homogeneous linear system of equations with variables in nonzero entries of $X$ in the order $$\{x_{11}, x_{13}, x_{14}, x_{15}, x_{22}, x_{24}, x_{25}, x_{31}, x_{35}, x_{41}, x_{42}, x_{44}, x_{51}, x_{52}, x_{53}, x_{55}\}$$ has the matrix
$${\scriptsize \left(
\begin{array}{cccccccccccccccc}
 0 & 1 & 1 & 0 & 0 & 0 & 0 & 0 & 0 & 0 & 0 & 0 & 0 & 0 & 0 & 0 \\
 1 & -1 & 0 & 0 & -1 & 0 & 0 & 0 & 0 & 0 & 0 & 0 & 0 & 0 & 0 & 0 \\
 0 & 1 & 0 & -1 & 0 & -1 & 0 & 0 & 0 & 0 & 0 & 0 & 0 & 0 & 0 & 0 \\
 0 & -1 & 0 & 0 & 1 & 1 & 0 & 0 & 0 & 0 & 0 & 0 & 0 & 0 & 0 & 0 \\
 0 & 0 & -1 & 0 & 0 & 0 & -1 & 0 & 0 & 0 & 0 & 0 & 0 & 0 & 0 & 0 \\
 0 & 0 & 1 & 0 & 0 & 0 & -1 & 0 & 0 & 0 & 0 & 0 & 0 & 0 & 0 & 0 \\
 -1 & 0 & 0 & 0 & 1 & 0 & 0 & -1 & 0 & 0 & 0 & 0 & 0 & 0 & 0 & 0 \\
 0 & 0 & 0 & -1 & 0 & 1 & 0 & 0 & -1 & 0 & 0 & 0 & 0 & 0 & 0 & 0 \\
 0 & 0 & 0 & 0 & 0 & 0 & 1 & 0 & -1 & 0 & 0 & 0 & 0 & 0 & 0 & 0 \\
 0 & 0 & 0 & 0 & 0 & 0 & 0 & -1 & 0 & -1 & 0 & 0 & 0 & 0 & 0 & 0 \\
 0 & 0 & 0 & 0 & 1 & 0 & 0 & 1 & 0 & 0 & -1 & 0 & 0 & 0 & 0 & 0 \\
 0 & 0 & 0 & 0 & 0 & 1 & 0 & 0 & -1 & 0 & 0 & -1 & 0 & 0 & 0 & 0 \\
 0 & 0 & 0 & 0 & 0 & 0 & 0 & -1 & 0 & 0 & 1 & 0 & -1 & 0 & 0 & 0 \\
 0 & 0 & 0 & 0 & 0 & 0 & 0 & 0 & 0 & 1 & 0 & 0 & 0 & -1 & 0 & 0 \\
 0 & 0 & 0 & 0 & 0 & 0 & 0 & 0 & 0 & 1 & 0 & 0 & 0 & 1 & 0 & 0 \\
 0 & 0 & 0 & 0 & 0 & 0 & 0 & 0 & 0 & 0 & 1 & 1 & 0 & 0 & -1 & 0 \\
 0 & 0 & 0 & 0 & 0 & 0 & 0 & 0 & 0 & 0 & 1 & 0 & 1 & 0 & -1 & 0 \\
 0 & 0 & 0 & 0 & 0 & 0 & 0 & 0 & 0 & 0 & 0 & 0 & 0 & 1 & 1 & 0 \\
 0 & 0 & 0 & 0 & 0 & 0 & 0 & 0 & -1 & 0 & 0 & 1 & 0 & 0 & 0 & -1 \\
 0 & 0 & 0 & 0 & 0 & 0 & 0 & 0 & 0 & 0 & 0 & 1 & 0 & 0 & 1 & -1 \\
\end{array}
\right)},$$
which is also called a verification matrix of $A$. Observe that it has a full column rank, which implies $X=O$. This proves that $A$ has the nSSP and so $\dvec{P}_{5,\{3\}}$ allows, but does not require the nSSP.
\end{example}

These examples illustrate that resolving the subproblem of Problem~\ref{problem-6.5} for the case where $|\loops|=1$ is a non-trivial and compelling task.
Furthermore, the preceding examples naturally motivate the following question.

\begin{question}
Is it true that if $n$ is even and $\loops\ne \emptyset$, then $\dvec{P}_{n,\loops}$ allows the nSSP?
\end{question}

We finish the paper by resolving Problem~\ref{problem-6.5} for $n\leq 5$.

\begin{example}\label{ex:n<=5}
  Note that the results in Section~\ref{sec:double-paths} give a complete answer to Problem~\ref{problem-6.5} for graphs on at most four vertices. We gather their properties in Table~\ref{tab:small-paths}, where we present all nonisomorphic double paths.

In the case $n=5$ there are six loop assignments $\loops \subseteq [5]$ that are not resolved by the Theorems~\ref{thm:double_paths-lots-of-loops}, \ref{thm:double_paths-first-m-loops}, \ref{thm:double-path-vtx-2} and Example~\ref{ex:central_being_looped}.

 Let us show that for $\loops=\{1,2,4\}$ the double path $\dvec{P}_{5,\loops}$ requires the nSSP.
Take arbitrary $A=(a_{ij})\in \mathcal{M}(\dvec{P}_{5,\loops})$ and $X=(x_{ij})\in \overline{\mathcal{M}}(\dvec{P}_{5,\loops}^c)$.  Using Lemma~\ref{lem:rule1} eight times there exists a  sequence of supergraphs of $G=\dvec{P}_{5,\loops}$ ending in $K_5^\circ$. Namely, on Figure~\ref{fig:P_5-loops=124-requires} we show a sequence, where on each arrow between $G_{\ell}$ and $G_{\ell+1}$ we write the triple $(i,j,k)$, so that $(i,j,k)$ satisfies condition~\eqref{lem:rule1out} and $(j,i,k)$ satisfies~\eqref{rule1in}, and so $G_{\ell+1}=G_{\ell}+\{\{j,k\}\}$. By Theorem~\ref{thm:rule1-nSSP} it follows that $\dvec{P}_{5,\{1,2,4\}}$ requires the nSSP.

\begin{figure}[htb]
\begin{tikzpicture}[scale=0.5]
\foreach \i in {1,...,4} {
 \draw[thick] (\i,0)--(\i+1,0);
 \node[draw=none] at (\i,-0.7) {\small{\i}};
 \node[fill=white] (\i) at (\i,0) {};
};
 \node[fill=white] (5) at (5,0) {};
\node[draw=none] at (5,-0.7) {\small{5}};
\tikzset{every loop/.style={min distance=4mm,in=120,out=60,looseness=30}}
\foreach \i in {1,2,4} {
     \draw[thick,->] (\i) to[loop above] (\i);
};
\node[rectangle, draw=none] at (3,-2) {$G_0=\dvec{P}_{5,\{1,2,4\}}$};

\begin{scope}[shift={(9,0)}]
\node[draw=none] at (-1.5,1) {\small $(2,1,3)$};
\draw[->] (-2,0)--(-1,0);
\foreach \i in {1,...,4} {
 \draw[thick] (\i,0)--(\i+1,0);
 \node[draw=none] at (\i,-0.7) {\small{\i}};
 \node[fill=white] (\i) at (\i,0) {};
};
 \node[fill=white] (5) at (5,0) {};
\node[draw=none] at (5,-0.7) {\small{5}};
 \draw[thick,green] (1) to[bend right] (3);
\tikzset{every loop/.style={min distance=4mm,in=120,out=60,looseness=30}}
\foreach \i in {1,2,4} {
     \draw[thick,->] (\i) to[loop above] (\i);
};
\node[rectangle, draw=none] at (3,-2) {$G_1$};
\end{scope}

\begin{scope}[shift={(18,0)}]
\node[draw=none] at (-1.5,1) {\small $(3,1,4)$};
\draw[->] (-2,0)--(-1,0);
\foreach \i in {1,...,4} {
 \draw[thick] (\i,0)--(\i+1,0);
 \node[draw=none] at (\i,-0.7) {\small{\i}};
 \node[fill=white] (\i) at (\i,0) {};
};
 \node[fill=white] (5) at (5,0) {};
\node[draw=none] at (5,-0.7) {\small{5}};
 \draw[thick] (1) to[bend right] (3);
 \draw[thick,green] (1) to[bend right] (4);
\tikzset{every loop/.style={min distance=4mm,in=120,out=60,looseness=30}}
\foreach \i in {1,2,4} {
     \draw[thick,->] (\i) to[loop above] (\i);
};
\node[rectangle, draw=none] at (3,-2) {$G_2$};
\end{scope}

\begin{scope}[shift={(0,-5)}]
\node[draw=none] at (-1.5,1) {\small $(3,5,2)$};
\draw[->] (-2,0)--(-1,0);
\foreach \i in {1,...,4} {
 \draw[thick] (\i,0)--(\i+1,0);
 \node[draw=none] at (\i,-0.7) {\small{\i}};
 \node[fill=white] (\i) at (\i,0) {};
};
 \node[fill=white] (5) at (5,0) {};
\node[draw=none] at (5,-0.7) {\small{5}};
 \draw[thick] (1) to[bend right] (3);
 \draw[thick] (1) to[bend right] (4);
 \draw[thick,green] (2) to[bend right] (5);
\tikzset{every loop/.style={min distance=4mm,in=120,out=60,looseness=30}}
\foreach \i in {1,2,4} {
     \draw[thick,->] (\i) to[loop above] (\i);
};
\node[rectangle, draw=none] at (3,-2) {$G_3$};
\end{scope}

\begin{scope}[shift={(9,-5)}]
\node[draw=none] at (-1.5,1) {\small $(1,5,1)$};
\draw[->] (-2,0)--(-1,0);
\foreach \i in {1,...,4} {
 \draw[thick] (\i,0)--(\i+1,0);
 \node[draw=none] at (\i,-0.7) {\small{\i}};
 \node[fill=white] (\i) at (\i,0) {};
};
 \node[fill=white] (5) at (5,0) {};
\node[draw=none] at (5,-0.7) {\small{5}};
 \draw[thick] (1) to[bend right] (3);
 \draw[thick] (1) to[bend right] (4);
 \draw[thick] (2) to[bend right] (5);
 \draw[thick,green] (1) to[bend right=45] (5);
\tikzset{every loop/.style={min distance=4mm,in=120,out=60,looseness=30}}
\foreach \i in {1,2,4} {
     \draw[thick,->] (\i) to[loop above] (\i);
};
\node[rectangle, draw=none] at (3,-2) {$G_4$};
\end{scope}

\begin{scope}[shift={(18,-5)}]
\node[draw=none] at (-1.5,1) {\small $(1,4,2)$};
\draw[->] (-2,0)--(-1,0);
\foreach \i in {1,...,4} {
 \draw[thick] (\i,0)--(\i+1,0);
 \node[draw=none] at (\i,-0.7) {\small{\i}};
 \node[fill=white] (\i) at (\i,0) {};
};
 \node[fill=white] (5) at (5,0) {};
\node[draw=none] at (5,-0.7) {\small{5}};
 \draw[thick] (1) to[bend right] (3);
 \draw[thick] (1) to[bend right=45] (4);
 \draw[thick] (2) to[bend right=45] (5);
 \draw[thick] (1) to[bend right=50] (5);
 \draw[thick,green] (2) to[bend right] (4);
\tikzset{every loop/.style={min distance=4mm,in=120,out=60,looseness=30}}
\foreach \i in {1,2,4} {
     \draw[thick,->] (\i) to[loop above] (\i);
};
\node[rectangle, draw=none] at (3,-2) {$G_5$};
\end{scope}

\begin{scope}[shift={(0,-10)}]
\node[draw=none] at (-1.5,1) {\small $(2,5,3)$};
\draw[->] (-2,0)--(-1,0);
\foreach \i in {1,...,4} {
 \draw[thick] (\i,0)--(\i+1,0);
 \node[draw=none] at (\i,-0.7) {\small{\i}};
 \node[fill=white] (\i) at (\i,0) {};
};
 \node[fill=white] (5) at (5,0) {};
\node[draw=none] at (5,-0.7) {\small{5}};
 \draw[thick] (1) to[bend right] (3);
 \draw[thick] (1) to[bend right=45] (4);
 \draw[thick] (2) to[bend right=45] (5);
 \draw[thick] (1) to[bend right=50] (5);
 \draw[thick] (2) to[bend right] (4);
 \draw[thick,green] (3) to[bend right] (5);
\tikzset{every loop/.style={min distance=4mm,in=120,out=60,looseness=30}}
\foreach \i in {1,2,4} {
     \draw[thick,->] (\i) to[loop above] (\i);
};
\node[rectangle, draw=none] at (3,-2) {$G_6$};
\end{scope}

\begin{scope}[shift={(9,-10)}]
\node[draw=none] at (-1.5,1) {\small $(2,3,3)$};
\node[draw=none] at (-1.5,-1) {\small $(4,5,5)$};
\draw[->] (-2,0)--(-1,0);
\foreach \i in {1,...,4} {
 \draw[thick] (\i,0)--(\i+1,0);
 \node[draw=none] at (\i,-0.7) {\small{\i}};
 \node[fill=white] (\i) at (\i,0) {};
};
 \node[fill=white] (5) at (5,0) {};
\node[draw=none] at (5,-0.7) {\small{5}};
 \draw[thick] (1) to[bend right] (3);
 \draw[thick] (1) to[bend right=45] (4);
 \draw[thick] (2) to[bend right=45] (5);
 \draw[thick] (1) to[bend right=50] (5);
 \draw[thick] (2) to[bend right] (4);
 \draw[thick] (3) to[bend right] (5);
\tikzset{every loop/.style={min distance=4mm,in=120,out=60,looseness=30}}
\foreach \i in {1,2,4} {
     \draw[thick,->] (\i) to[loop above] (\i);
};
\foreach \i in {3} {
     \draw[thick,->,green] (\i) to[loop above] (\i);
};
\node[rectangle, draw=none] at (3,-2) {$G_7$};
\end{scope}

\begin{scope}[shift={(18,-10)}]
\node[draw=none] at (-1.5,1) {\small $(4,5,5)$};
\draw[->] (-2,0)--(-1,0);
\foreach \i in {1,...,4} {
 \draw[thick] (\i,0)--(\i+1,0);
 \node[draw=none] at (\i,-0.7) {\small{\i}};
 \node[fill=white] (\i) at (\i,0) {};
};
 \node[fill=white] (5) at (5,0) {};
\node[draw=none] at (5,-0.7) {\small{5}};
 \draw[thick] (1) to[bend right] (3);
 \draw[thick] (1) to[bend right=45] (4);
 \draw[thick] (2) to[bend right=45] (5);
 \draw[thick] (1) to[bend right=50] (5);
 \draw[thick] (2) to[bend right] (4);
 \draw[thick] (3) to[bend right] (5);
\tikzset{every loop/.style={min distance=4mm,in=120,out=60,looseness=30}}
\foreach \i in {1,2,3,4} {
     \draw[thick,->] (\i) to[loop above] (\i);
};
\foreach \i in {5} {
     \draw[thick,->,green] (\i) to[loop above] (\i);
};
\node[rectangle, draw=none] at (3,-2) {$G_8=K_5^\circ$};
\end{scope}
\end{tikzpicture}
\caption{Sequence $\dvec{P}_{5,\{1,2,4\}}\subset G_1 \subset G_2 \subset \ldots \subset G_8=K_5^{\circ}$ which shows that $\dvec{P}_{5,\{1,2,4\}}$ requires the nSSP. Every undirected edge represents a double arc and $G_{\ell+1}=G_{\ell}+\{\{j,k\}\}$, where the double arc $\{j,k\}$ is coloured green.}\label{fig:P_5-loops=124-requires}
\end{figure}
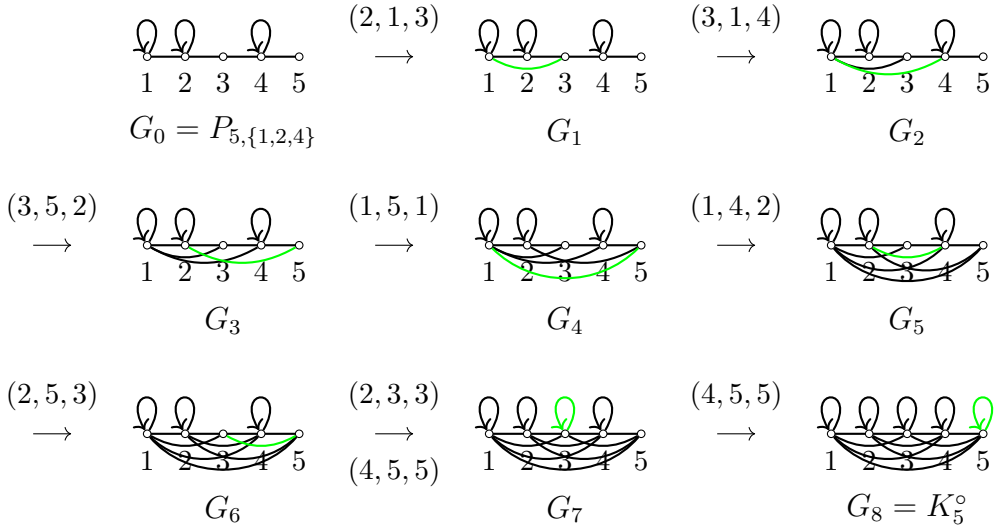

\begin{table}[htb]
    \centering
    \begin{tabular}{c||c|c}
       $\loops$  & $A$ & $X$\\
       \hline
       $\{1,4\}$  & $\begin{pmatrix}
           1 & 2 & 0 & 0 & 0 \\
 1 & 0 & 2 & 0 & 0 \\
 0 & 4 & 0 & 2 & 0 \\
 0 & 0 & -1 & -1 & 2 \\
 0 & 0 & 0 & 1 & 0 
       \end{pmatrix}$  &
       $\begin{pmatrix}
           0 & 0 & 2 & 1 & 1 \\
 0 & 4 & 0 & -2 & 0 \\
 2 & 0 & 2 & 0 & -1 \\
 -2 & 2 & 0 & 0 & 0 \\
 -4 & 0 & 4 & 0 & 2 \\
       \end{pmatrix}$\\
       \hline
      $\{2,3\}$   & $\begin{pmatrix}
 0 & 1 & 0 & 0 & 0 \\
 2 & 1 & 1 & 0 & 0 \\
 0 & 2 & 2 & 1 & 0 \\
 0 & 0 & 1 & 0 & 1 \\
 0 & 0 & 0 & 1 & 0 \\
 \end{pmatrix}$ & $\begin{pmatrix}
 -2 & 0 & 4 & -8 & 4 \\
 0 & 0 & 0 & 4 & -4 \\
 1 & 0 & 0 & 0 & 2 \\
 -2 & 2 & 0 & -2 & 0 \\
 1 & -2 & 2 & 0 & -4 \\
\end{pmatrix}$\\
\hline
      $\{1,2,5\}$   & $\left(
\begin{array}{ccccc}
 1 & 1 & 0 & 0 & 0 \\
 \frac{7}{3} & 2 & 1 & 0 & 0 \\
 0 & 1 & 0 & 1 & 0 \\
 0 & 0 & 1 & 0 & 1 \\
 0 & 0 & 0 & 1 & 3 \\
\end{array}
\right)$ & $\left(
\begin{array}{ccccc}
 0 & 0 & 0 & 0 & 7 \\
 0 & 0 & 0 & 3 & 6 \\
 0 & 0 & 3 & 0 & 2 \\
 0 & 3 & 0 & 2 & 0 \\
 3 & 6 & 2 & 0 & 0 \\
\end{array}
\right)$ \\
\hline
      $\{1,3,4\}$   &$\left(
\begin{array}{ccccc}
 1 & 1 & 0 & 0 & 0 \\
 10 & 0 & 1 & 0 & 0 \\
 0 & 2 & -\frac{9}{35} & 1 & 0 \\
 0 & 0 & \frac{10296}{1225} & \frac{46}{35} & 1 \\
 0 & 0 & 0 & 1 & 0 \\
\end{array}
\right)$  &$\footnotesize\left(
\begin{array}{ccccc}
 0 & 0 & 10296 & \frac{453024}{35} & -\frac{41184}{5} \\
 0 & \frac{5148}{5} & 0 & \frac{41184}{5} & \frac{370656}{175} \\
 \frac{2574}{5} & 0 & 0 & 0 & \frac{41184}{5} \\
 77 & 490 & 0 & 0 & 0 \\
 -49 & 126 & 980 & 0 & -\frac{41184}{5} \\
\end{array}
\right)$ \\
\hline
      $\{1,3,5\}$   & $ \footnotesize\left(
\begin{array}{ccccc}
 1 & 1 & 0 & 0 & 0 \\
 \frac{3}{4} \left(2+\sqrt{3}\right) & 0 & 1 & 0 & 0 \\
 0 & \frac{1}{4} \left(4+\sqrt{3}\right) & 1 & 1 & 0 \\
 0 & 0 & 2 & 0 & 1 \\
 0 & 0 & 0 & 3 & 1 \\
\end{array}
\right)$ & $\scalebox{0.7}{$
\left(
\begin{array}{ccccc}
 0 & 0 & \frac{3}{4} (2+\sqrt{3}) & 0 & -\frac{3}{8} (3+\sqrt{3}) \\
 0 & 1 & 0 & \frac{1}{2} (1+\sqrt{3}) & 0 \\
 -\frac{4}{13} (\sqrt{3}-4) & 0 & 0 & 0 & \frac{3}{2} (1+\sqrt{3}) \\
 0 & \frac{1}{13} (1+3 \sqrt{3}) & 0 & \frac{1}{2} (1+\sqrt{3}) & 0 \\
 \frac{1}{39} (7 \sqrt{3}-15) & 0 & \frac{1}{4} (1+\sqrt{3}) & 0 & 0
\end{array}
\right)
$}$\\
    \end{tabular}
    \caption{Matrices $A\in{\mathcal M}(\dvec{P}_{5,\loops})$ and $X$ with $A\circ X=[A,X\trans]=O$.}
    \label{tab:n=5-missing-loops}
\end{table}

For other assignments of $\loops$ we present in Table~\ref{tab:n=5-missing-loops} matrices $A\in\mathcal{M}(\dvec{P}_{5,\loops})$ and $X$, such that $A\circ X=[A,X\trans]=O$, which shows $\dvec{P}_{5,\loops}$ does not require the nSSP. However, they are all supergraphs of $\dvec{P}_{5,\{1\}}$ or $\dvec{P}_{5,\{3\}}$, and so by Lemma~\ref{lem:superpattern-lemma} (and either Theorem~\ref{thm:double_paths-first-m-loops} or Example~\ref{ex:central_being_looped}) they all allow the nSSP. With this we characterized all non-isomorphic double paths on at most five vertices, which are presented in  Table~\ref{tab:small-paths}. 
\end{example}

\bibliographystyle{plain}
\bibliography{bibliography}
\end{document}